\newcommand\bcdot{\ensuremath{%
  \mathchoice%
   {\mskip\thinmuskip\lower0.2ex\hbox{\scalebox{1.5}{$\cdot$}}\mskip\thinmuskip}}%
   {\mskip\thinmuskip\lower0.2ex\hbox{\scalebox{1.5}{$\cdot$}}\mskip\thinmuskip}%
   {\lower0.3ex\hbox{\scalebox{1.2}{$\cdot$}}}%
   {\lower0.3ex\hbox{\scalebox{1.2}{$\cdot$}}}%
}
\newtheorem{thm}{Theorem}\numberwithin{thm}{section}
\newtheorem{prop}[thm]{Proposition}
\newtheorem{lem}[thm]{Lemma}
\newtheorem{cor}[thm]{Corollary}
\theoremstyle{definition}
\newtheorem{ex}[thm]{Example}
\newtheorem{exs}[thm]{Examples}
\newtheorem{rem}[thm]{Remark}
\newtheorem{dfn}[thm]{Definition}
\newtheorem{ass}[thm]{Assumptions}
\newtheorem{obs}[thm]{Observation}
\newcommand{\e}{\varepsilon}						
\renewcommand{\l}{\lambda}							
\newcommand{\nn}{\nonumber}								
\renewcommand{\asymp}{\sim}
\newcommand{\N}{\mathbb{N}}					
\newcommand{\Z}{\mathbb{Z}}						
\newcommand{\R}{\mathbb{R}}								
\newcommand{\CF}{\mathbb{C}}						
\newcommand{\BL}{\mathscr{L}}								
\newcommand{\MOP}[1]{\mathop{\mathrm{#1}}}						
\newcommand{\Op}[2]{\MOP{Op^{#1}}\left( #2 \right)}						
\newcommand{\Tr}{\mathrm{Tr}}
\newcommand{\vN}{\mathop{\mathrm{VN}}}							
\newcommand{\Sp}{\mathop{\mathrm{Sp}}}						
\newcommand{\tr}{\mathrm{\tau}}
\newcommand{\Abs}[1]{\left| #1\right|}						
\newcommand{\bracket}[2]{\left\langle #1 , #2\right\rangle}						
\newcommand{\Norm}[2]{\left\| #2\right\|_{#1}}								
\newcommand{\qn}[2]{\left | #2 \right |_{#1}}
\newcommand{\Lie}[1]{\frak{#1}} 		
\newcommand{\Rspan}[1]{\mathbb{R}\text{-}\mathrm{span}\{ #1\}} 			
\newcommand{\Liez}[1]{\mathfrak{z}(\mathfrak{#1})}						
\newcommand{\subgr}{\leq}									
\newcommand{\nsubgr}{\vartriangleleft}						
\newcommand{\dimG}{d}								
\newcommand{\pid}{\mathfrak{m}}					
\newcommand{\qa}{\mathfrak{h}}						
\newcommand{\PID}{M}									
\newcommand{\QA}{H}										
\newcommand{\rquo}{\setminus}									
\newcommand{\Orbit}{\mathcal{O}}								
\newcommand{\UEA}[1]{\mathfrak{u}\left( \Lie{#1} \right)}			
\newcommand{\coAd}{\mathop{\mathrm{Ad}^*}}				
\newcommand{\coad}{\mathop{\mathrm{ad}^*}}							
\newcommand{\hdim}{Q}							
\newcommand{\RO}{\mathcal{R}}							
\newcommand{\RF}{P}										
\newcommand{\hdeg}{\nu}											
\newcommand{\Leb}{m}										
\newcommand{\HS}{\mathcal{H}}			
\newcommand{\RS}{\mathcal{H}_\pi}				
\newcommand{\fd}{d_\pi}							
\newcommand{\Pf}{\mathrm{Pf}}						
\newcommand{\indR}[3]{\mathop{\mathrm{ind}^{#3}_{#2}(#1)}} 			
\newcommand{\rp}{\kappa}										
\newcommand{\om}{\mu}								
\newcommand{\Pla}{\mu}							
\renewcommand{\H}{\mathbf{H}_n}					
\newcommand{\h}{\mathfrak{h}_n}						
\newcommand{\HG}[2]{\mathbf{H}_{#1, #2}}						
\newcommand{\HA}[2]{\mathfrak{h}_{#1, #2}}							
\newcommand{\gv}[1]{\tilde{#1}}						
\renewcommand{\L}[2]{L^{#1}(#2)}					
\newcommand{\LW}[3]{L^{#1}_{#3}(#2)}					
\newcommand{\SF}[1]{\mathscr{S}(\mathbb{R}^{#1})}			
\newcommand{\SFG}[1]{\mathscr{S}(#1)}							
\newcommand{\SDG}[1]{\mathscr{S}'(#1)}			
\newcommand{\vt}{\mathbf{v}} 			
\newcommand{\pt}{\mathbf{p}} 					
\newcommand{\qt}{\mathbf{q}} 				
\newcommand{\st}{\mathbf{s}} 			
\newcommand{\SL}{\mathcal{L}}	
\newcommand{\HO}{\mathcal{Q}_{\R^n}}		
\newcommand{\HHO}{\mathcal{Q}_{\H}}			
\newcommand{\HAO}{\mathcal{A}_{\H}}					
\begin{document}
\title[Harmonic and Anharmonic Oscillators on $\H$]{Harmonic and Anharmonic Oscillators \\ on the Heisenberg Group}

\author[D. Rottensteiner]{David Rottensteiner}
\address{
David Rottensteiner:
\endgraf Faculty of Mathematics
\endgraf University of Vienna
\endgraf Oskar-Morgenstern-Platz 1, 1090 Vienna 
\endgraf
Austria}
\email{david.rottensteiner@univie.ac.at}

\author[M. Ruzhansky]{Michael Ruzhansky}
\address{
  Michael Ruzhansky:
  \endgraf
  Department of Mathematics: Analysis,
Logic and Discrete Mathematics
  \endgraf
  Ghent University, Belgium
  \endgraf
  and
  \endgraf
  School of Mathematical Sciences
    \endgraf
    Queen Mary University of London
  \endgraf
  United Kingdom
  \endgraf
  }
\email{ruzhansky@gmail.com}

\subjclass[2010]{35R03, 35P20}
\keywords{Harmonic oscillator, anharmonic oscillator, Heisenberg group, Dynin-Folland group, sub-Laplacian, eigenvalue distribution, counting function}

\begin{abstract}
Although there is no canonical version of the harmonic oscillator on the Heisenberg group $\mathbf{H}_n$ so far, we make a strong case for a particular choice of operator by using the representation theory of the Dynin-Folland group $\mathbf{H}_{n, 2}$, a $3$-step stratified Lie group, whose generic representations act on $L^2(\mathbf{H}_n)$. Our approach is inspired by the connection between the harmonic oscillator on $\mathbb{R}^n$ and the sum of squares in the first stratum of $\mathbf{H}_n$ in the sense that we define the harmonic oscillator on $\mathbf{H}_n$ as the image of the sub-Laplacian $\mathcal{L}_{\mathbf{H}_{n, 2}}$ under the generic unitary irreducible representation $\pi$ of the Dynin-Folland group which has formal dimension $d_\pi = 1$. This approach, more generally, permits us to define a large class of so-called anharmonic oscillators by employing positive Rockland operators on $\mathbf{H}_{n, 2}$.
By using the methods developed in ter Elst and Robinson~\cite{tERo}, we obtain spectral estimates for the harmonic and anharmonic oscillators on $\mathbf{H}_n$. 

Moreover, we show that our approach extends to graded $SI/Z$-groups of central dimension $1$, i.e., graded groups which possess unitary irreducible representations which are square-integrable modulo the $1$-dimensional center $Z(G)$.

The latter part of the article is concerned with spectral multipliers. By combining ter Elst and Robinson's techniques with recent results in \cite{AkRu18}, we obtain useful $L^\mathbf{p}$-$L^\mathbf{q}$-estimates for spectral multipliers of the sub-Laplacian $\mathcal{L}_{\mathbf{H}_{n, 2}}$ and, in fact more generally, of general Rockland operators on general graded groups. As a by-product, we recover the Sobolev embeddings on graded groups established in \cite{FiRu17}, and obtain explicit hypoelliptic heat semigroup estimates.
\end{abstract}

\maketitle

\tableofcontents

\section{Introduction}

In this article we strongly propose a natural candidate for a canonical choice of the harmonic oscillator on the Heisenberg group $\H$. Our approach is motivated by the fact that the harmonic oscillator\footnote{The factor $4 \pi^2$ in the harmonic oscillator arises from our choice of realizing the Schr\"{o}dinger representation, but creates no essential deviation from the versions in Folland~\cite{FollPhSp} or Stein~\cite{Stein}.} $\HO := - \Delta + 4 \pi^2 \Abs{t}^2$ on $\R^n$, on the one hand, can be written as the image $-d\rho(\SL_{\H})$ of the negative sub-Laplacian on $\H$ under the infinitesimal Schr\"{o}dinger representation $d\rho$ of the Heisenberg Lie algebra $\h$, and, on the other hand, as the Weyl quantization (again related to $\rho$) of the symbol $\sigma(t, \xi) := 4 \pi^2 (\Abs{\xi}^2 + \Abs{t}^2)$, that is,
	\begin{align}
		- \Delta + 4 \pi^2 \Abs{t}^2 = -d\rho(\SL_{\H}) = \Op{w}{\sigma} = \iint_{\H/Z(\H)} \widehat{\sigma}(x,y) \rho(x,y) \, dx \, dy. \label{CriticalRelHO}
	\end{align}
Of all Lie groups whose unitary irreducible representations act on $\L{2}{\H}$ we therefore pick the $3$-step nilpotent Dynin-Folland group $\HG{n}{2}$, as it was conceived in Dynin's foundational paper~\cite{Dyn1} precisely to solve the problem of the Weyl quantization on $\H$, i.e., the momentum variables in $T^* \H \cong \HG{n}{2}/Z(\HG{n}{2})$ are mapped correctly onto the left-invariant vector fields on $\H$ while the coordinate functions are mapped correctly onto the coordinate multiplication operators. In particular this solves the analogue of \eqref{CriticalRelHO}.
The crucial ingredient for the Weyl quantization are the group's generic unitary irreducible representations.

Many years after Dynin's brief introduction of the group (without giving it a name), Folland~\cite{FollMeta} studied the group in detail as a special case of what he called meta-Heisenberg groups: the meta-Heisenberg group of the Heisenberg group. Recently, Fischer, Rottensteiner and Ruzhansky~\cite{FiRoRu} characterized the group's unitary irreducible representations in order to study the related coorbit spaces in the sense of Feichtinger and Gr\"{o}chenig's foundational paper~\cite{fegr89}. In recognition of Dynin's and Folland's works, the authors named the group the ``Dynin-Folland group'' and denoted it by $\HG{n}{2}$, in recognition of the meta-Heisenberg aspect.

In this article, we make use of the intimate connection between left-invariant\footnote{This is our choice. Equivalently, one could focus on right-invariant operators.} operators on $\H$ and the generic representations of its meta-Heisenberg group $\HG{n}{2}$ in order to define a canonical harmonic oscillator $\HHO$ on $\H$ as the image $-d\pi(\SL)$ of the negative sub-Laplacian $\SL_{\HG{n}{2}}$ under the generic representation $\pi \in \widehat{\mathbf{H}}_{n, 2}$ of formal dimension $\fd = 1$. Employing the machinery developed in ter Elst and Robinson~\cite{tERo}, we provide concrete spectral estimates for this operator and those dilated versions of it which correspond to the images of $-\SL_{\HG{n}{2}}$ under all the other generic representations $\pi \in \widehat{\mathbf{H}}_{n, 2}$. The spectral estimates are initially given for $\HG{n}{2}$ equipped with the canonical homogeneous structure, of which $\H$ equipped with its canonical homogeneous structure forms a subgroup. Since the techniques in \cite{tERo} were developed for positive Rockland operators on general graded groups, we extend our approach to study a much wider class of so-called anharmonic oscillators $\HAO$ on $\H$, which are defined as the images of positive Rockland operators $\RO$ under the generic representations $\pi \in \widehat{\mathbf{H}}_{n, 2}$. The spectral estimates we provide for these operators are given with respect to a large class of homogeneous structures on $\HG{n}{2}$. As a special case this includes the family of the harmonic oscillator and its natural dilates. Moreover, we show that the concrete spectral estimates extend from $\HG{n}{2}$ to a large class of graded groups which comprises $\H$ and $\HG{n}{2}$.
The last section of the article pays tribute to the prominent role the sub-Laplacians and positive homogeneous Rockland operators on $\HG{n}{2}$ play in our approach to harmonic and anharmonic oscillators on $\H$. By another application of ter Elst and Robinson's machinery combined with recent results established in Akylzhanov and Ruzhansky~\cite{AkRu18}, we prove asymptotic bounds for spectral multipliers of the sub-Laplacian $\SL_{\HG{n}{2}}$ and, more generally, for spectral multipliers of Rockland operators on general graded groups. Notably, we recover Sobolev embeddings, which were first proved in Folland~\cite{Fo75} for the sub-Laplacian on a generic stratified group and, more generally in Fischer and Ruzhansky~\cite{FiRu17}, for all positive Rockland operators on a general graded group.

Let us mention that we are not the first to propose a definition of harmonic oscillator on $\H$. Fischer~\cite{Fi11}, for example, makes a case for $\mathcal{Q}_{\mathcal{H}_1} := -d\pi(\SL_G)$ for the sub-Laplacian on a $6$-dimensional $2$-step nilpotent Lie group $G$. The main difference between this operator and the harmonic oscillator that we propose here lies in the quadratic potential added to the sub-Laplacian $\SL_{\mathcal{H}_1}$. However, the spectrum of her operator $\mathcal{Q}_{\mathcal{H}_1}$ in \cite{Fi11} is continuous whereas the spectrum of our version of $\mathcal{Q}_{\mathcal{H}_1}$ is discrete. Since the spectrum of the harmonic oscillator on $\R^1$, and more generally on $\R^n$, is discrete (in practice and also due to physical considerations), it is natural to expect that any reasonable candidate for $\mathcal{Q}_{\mathcal{H}_1}$ would have the same features, which is indeed the case with the operator considered in this paper.

As an example, with a general derivation in Section \ref{SectionHHO} and, more specifically, taking $\rp=1$ in Example \ref{EX:haH1}, our proposed harmonic oscillator $\mathcal{Q}_{\mathbf{H}_1}$ on the first Heisenberg group ${\mathbf{H}_1}$ with the canonical coordinates $(t_1,t_2,t_3)$ has the explicit form
	\begin{align*}
			\mathcal{Q}_{\mathbf{H}_1}= - \bigl( \partial_{t_1}^2 + \partial_{t_2}^2 \bigr) - \frac{1}{4} \bigl ({t_1}^2 + {t_2}^2 \bigr ) \hspace{1pt} \partial_{t_3}^2 + \bigl ( t_1 \hspace{1pt} \partial_{t_2} - t_2 \hspace{1pt} \partial_{t_1} \bigr ) \hspace{1pt} \partial_{t_3} + 4 \pi^2  \hspace{1pt} t_3^2.
\end{align*}
We refer to Subsection \ref{Subsection_HAO} for the details of many examples of 
different families of anharmonic oscillators on ${\mathbf{H}_n}$.

We note that in the case of $\R^n$ the general families of anharmonic oscillators and their spectral properties have been recently analyzed in \cite{ChDeRu18} from the point of view of the Weyl-H\"ormander theory. We refer to \cite{ChDeRu18} also for a more extensive discussion of the history of the results on the Euclidean anharmonic oscillators.

\smallskip
The article is organized as follows. In Section~\ref{ClassicalTheory} we recall in more detail the role of the Heisenberg group in \eqref{CriticalRelHO}. We provide the most crucial details about $\H$ and its generic representations $\rho \in \widehat{\mathbf{H}}_n$, realized as the Schr\"{o}dinger representations on $\L{2}{\R^n}$.

In Section~\ref{MachinerytERob} we briefly recall the machinery developed in ter Elst and Robinson~\cite{tERo} for positive Rockland operators on general graded groups. To this end, we recall the definitions of homogeneous structures and Rockland operators. The section also provides two examples for the usefulness of ter Elst and Robinson's results: spectral estimates for the harmonic oscillator $\HO$ and for a family of anharmonic oscillators on $\R$.

In Section~\ref{IntroDF} we give a quite brief but complete introduction to the Dynin-Folland $\HG{n}{2}$, its natural stratification, and its generic representations.

In Section~\ref{SectionHHO} we define the canonical harmonic oscillator $\HHO$ and provide a concrete formula in terms of the exponential coordinates on $\H$.

The main results of this paper are presented in Sections~\ref{SpecEst} to \ref{LpLq}. In Section~\ref{SpecEst} we explain why it is reasonable to extend the definition of $\HHO$ somewhat by employing all generic representations $\pi \in \widehat{\mathbf{H}}_{n, 2}$ and provide spectral estimates for this ``family of canonical harmonic oscillators'' in terms of the canonical homogeneous structure of $\HG{n}{2}$ related to its natural stratification. We then extend the approach to define a large class of anharmonic oscillators on $\H$ in terms of general positive Rockland operators on $\HG{n}{2}$. The subsequent spectral estimates for these operators are not restricted to the canonical homogeneous structure on $\HG{n}{2}$ anymore but provided for a large family of homogeneous structures, which allow the most precise estimates possible by ter Elst and Robinson's machinery. The harmonic oscillator $\HHO$ is included as a special case.

In Section~\ref{SecSpecEstSIZ} we prove that our approach via the Dynin-Folland group $\HG{n}{2}$ extends to all graded groups with $1$-dimensional center and unitary irreducible representations which are square-integrable modulo the center. The corresponding spectral estimates, however, are only given with respect to the canonical homogeneous structure for a given gradation since precise estimates require some a priori knowledge (about the dilations we use) which for an arbitrary homogeneous structure is not available.

Finally, in Section~\ref{LpLq} we show how the techniques used so far give rise to estimates on $L^\pt$-$L^\qt$-multipliers on the Dynin-Folland group and, somewhat surprisingly, on all graded groups. In particular, this holds for every admissible homogeneous structure on a given gradable group. The $L^\pt$-$L^\qt$-estimates are built on an explicit asymptotic growth-bound, which we can provide for every positive Rockland operator $\RO$ on a graded group equipped with an arbitrary, but fixed homogeneous structure. The bound itself is given in terms of the homogeneous degree of $\RO$ and the homogeneous dimension $\hdim$ of the group $G$. The actual $L^\pt$-$L^\qt$-estimates follow from an application of recent results on spectral multipliers on locally compact groups established in Akylzhanov and Ruzhansky~\cite{AkRu18}. It is worth noting that we recover important Sobolev embeddings due to Folland~\cite{Fo75} and Fischer and Ruzhansky~\cite{FiRu17}.

\section{The Harmonic Oscillator on $\R^n$} \label{ClassicalTheory}


In this section we briefly recall the most important facts about the relationship between the Heisenberg group $\H$ equipped with its natural stratification and the corresponding sub-Laplacian $\SL_{\H}$, and the canonical harmonic oscillator $\HO$ on $\R^n$. Since this is very well known matter, we will mainly use this section as a vehicle to set our notation and highlight specific details which will play a role in later sections. We will therefore omit the proofs and refer to the monograph Fischer and Ruzhansky~\cite{FiRuMon} instead.

\subsection{The Heisenberg Group and its Natural Stratification}

Our notation here will be chosen to be most convenient for the presentation of the results of this paper.

	\begin{dfn} \label{DefH}
Let $n \in \N$. We define the Heisenberg Lie algebra $\h$ to be $\R^{2n+1}$ equipped with the Lie bracket which for the standard basis $\{ X_{2n+1}, \ldots, X_1 \}$ is defined by the non-vanishing commutators 
$[X_j, X_{j+n}] = X_{2n+1}$ for all $j = 1, \ldots, n$.

We define the Heisenberg group $\H$ to be the connected, simply connected nilpotent Lie group obtained by exponentiating $\h$.
	\end{dfn}

By abbreviating elements $g = \exp_{\H} \bigl( x_{2n+1} X_{2n+1} + \ldots + x_1 X_1 \bigr) \in \H$ by $(x_{2n+1}, x_{2n}, \ldots, x_1)$ the $\H$-group law in exponential coordinates is given by
	\begin{align}
		 (x_{2n+1}, x_{2n}, \ldots, x_1) (x'_{2n+1}, &x'_{2n}, \ldots, x'_1) \nn \\ &= \Bigl( x_{2n+1} + x'_{2n+1} + \frac{1}{2} \sum_{j=1}^n (x_j x'_{n+j} - x'_j x_{n+j}), x_{2n} + x'_{2n}, \ldots, x_1 + x'_1 \Bigr).  \label{GrLawH}
	\end{align}
Often it is convenient to group the variables as $\gv{x}_3 := x_{2n+1}, \gv{x}_2 := (x_{2n}, \ldots, x_{n+1}), \gv{x}_1 := (x_n, \ldots, x_1)$ and rewrite the group law as
	\begin{align}
		(\gv{x}_3, \gv{x}_2, \gv{x}_1) (\gv{x}'_3, \gv{x}'_2, \gv{x}'_1) = \Bigl( \gv{x}_3 + \gv{x}'_3 + \frac{1}{2} \bigl( \bracket{\gv{x}_1}{\gv{x}'_2} - \bracket{\gv{x}_2}{\gv{x}'_1} \bigr), \gv{x}_2 + \gv{x}'_2, \gv{x}_1 + \gv{x}'_1 \Bigr). \label{GrLawHgv}
	\end{align}

The Heisenberg Lie algebra $\h$ possesses a natural stratification together with a family of dilations. 

	\begin{lem}
The Heisenberg Lie algebra $\h$ admits a stratification $\h = \Lie{g}_3 \oplus \Lie{g}_2 \oplus \Lie{g}_1$ with
        \begin{align*}
        	\Lie{g}_3 := \R X_{2n+1}, \hspace{10pt} \Lie{g}_2 := \Rspan{X_{2n}, \ldots, X_{n+1}}, \hspace{10pt} \Lie{g}_1 := \Rspan{X_n, \ldots, X_1},
        \end{align*}
for which
	\begin{align*}
		D_r(X_{2n+1}) = r^2 X_{2n+1}, \hspace{10pt} D_r(X_{j+n}) = r X_{j+n}, \hspace{10pt} D_r(X_j) = r X_j \hspace{10pt} j= 1, \ldots, n,
	\end{align*}
defines a family of dilations on $\h$.
	\end{lem}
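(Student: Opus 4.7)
The plan is to verify three assertions in order: (i) the claimed subspaces give a vector space direct sum decomposition of $\h$; (ii) the bracket relations on $\h$ are compatible with this decomposition in the sense required of a stratification; and (iii) each $D_r$ extends to a Lie algebra automorphism, so that $\{D_r\}_{r>0}$ is a family of dilations compatible with the gradation. All three reduce to elementary checks on the standard basis $\{X_1,\ldots,X_{2n+1}\}$ fixed in Definition~\ref{DefH}.

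For (i), the basis of $\h$ is partitioned by the three subsets $\{X_1,\ldots,X_n\}$, $\{X_{n+1},\ldots,X_{2n}\}$, $\{X_{2n+1}\}$ spanning $\Lie{g}_1$, $\Lie{g}_2$, $\Lie{g}_3$, respectively, from which the direct sum decomposition $\h = \Lie{g}_3 \oplus \Lie{g}_2 \oplus \Lie{g}_1$ is immediate. For (ii), the only non-vanishing commutators among basis elements are $[X_j, X_{j+n}] = X_{2n+1}$ for $j=1,\ldots,n$. Hence $[\Lie{g}_1, \Lie{g}_2] \subset \Lie{g}_3$, while $[\Lie{g}_1, \Lie{g}_1] = [\Lie{g}_2, \Lie{g}_2] = 0$, and $\Lie{g}_3$ lies in the center. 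Assigning the homogeneity weights $1, 1, 2$ to $\Lie{g}_1, \Lie{g}_2, \Lie{g}_3$, these relations respect the additive weight rule $1+1=2$, which is exactly the compatibility condition for a graded Lie algebra.

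For (iii), each $D_r$ is by construction a bijective linear map, and the one-parameter property $D_r \circ D_s = D_{rs}$ follows from $D_r X_k = r^{w_k} X_k$, with $w_k \in \{1,2\}$ the weight of the stratum containing $X_k$. The only genuine check is that $D_r$ preserves the bracket. By bilinearity this need only be verified on pairs of basis vectors; when the bracket vanishes, the identity is trivial, and for the remaining case we compute
\begin{equation*}
[D_r X_j, D_r X_{j+n}] = [r X_j, r X_{j+n}] = r^2 X_{2n+1} = D_r\bigl([X_j, X_{j+n}]\bigr),
\end{equation*}
where the equality $r \cdot r = r^2$ again encodes the additivity of weights. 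There is no real obstacle in this lemma: the statement is a compatibility check between the single non-vanishing commutator of $\h$ and the chosen weight assignment, and the conceptual takeaway is that the center is forced to carry weight $2$ precisely because brackets of the two \emph{symplectic halves} $\Lie{g}_1$ and $\Lie{g}_2$ of the first layer land there.
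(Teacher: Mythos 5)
Your proof is correct and is exactly the elementary verification the paper leaves implicit (the lemma is stated without proof). Two small observations. First, the labels $\Lie{g}_3, \Lie{g}_2, \Lie{g}_1$ in the statement do not coincide with the $D_r$-weights, which are $2,1,1$ rather than $3,2,1$; you rightly read the weights off the dilations rather than off the subscripts, and it helps to say explicitly that the genuine stratification of $\h$ is two-step with first stratum $\Lie{g}_1 \oplus \Lie{g}_2$ and second stratum $\Lie{g}_3$, the finer three-way split being a notational convenience mirroring the later treatment of $\mathfrak{h}_{n,2}$. Second, your step~(ii) as written only checks the \emph{gradation} property $[\Lie{g}_a,\Lie{g}_b]\subset\Lie{g}_{a+b}$; to justify the word ``stratification'' one also needs the first stratum to generate, i.e.\ $[\Lie{g}_1\oplus\Lie{g}_2,\Lie{g}_1\oplus\Lie{g}_2]=\Lie{g}_3$ with equality, which follows immediately since $[X_j,X_{j+n}]=X_{2n+1}$ spans $\R X_{2n+1}$. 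Your closing remark about the symplectic halves already contains this idea; it would only take one more sentence to make it an explicit part of the argument.
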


Due to the stratification, the negative sum of squares of squares $\bigl( X_1^2 + X_2^2 \bigr) \in \UEA{\h}$ gives rise to a hypoelliptic sub-Laplacian $\SL_{\H} := dR \bigl( X_1^2 + X_2^2 \bigr)$, which is densely defined on $\L{2}{\H}$; here $R$ denotes the right regular representation $R$ of $\H$ and $dR$ the extension to the universal enveloping algebra $\UEA{\h}$ of its derivative. The sub-Laplacian is negative definite and possesses a self-adjoint extension to $\L{2}{\H}$. In the above exponential coordinates it is explicitly given by
	\begin{align*}
		\SL_{\H} &= \bigl( \partial_{t_1} - \frac{1}{2} t_{n+1} \partial_{t_{2n+1}} \bigr)^2 + \ldots + \bigl( \partial_{t_n} - \frac{1}{2} t_{2n} \partial_{t_{2n+1}} \bigr)^2 \\
		& \hspace{10pt} + \bigl( \partial_{t_{n+1}} + \frac{1}{2} t_1 \partial_{t_{2n+1}}\bigr)^2 + \ldots + \bigl( \partial_{t_{2n}} + \frac{1}{2} t_n \partial_{t_{2n+1}}\bigr)^2.
	\end{align*}

\subsection{The Harmonic Oscillator Via The Schr\"{o}dinger Representation}

In this subsection we recall the connections between the sub-Laplacian of Heisenberg group $\SL_{\H}$ and the harmonic oscillator on $\R^n$, also known as the Hermite operator. To this end, we recall that the Heisenberg group $\H$ possesses a family of unitary irreducible representations which are square-integrable modulo the center $Z(\H) = \exp_{\H} \bigl( \R X_{2n+1} \bigr)$. Each of these representations corresponds to a uniquely determined flat coadjoint orbit
	\begin{align*}
		\Orbit_{\rp X^*_{2n+1}} = \rp X^*_{2n+1} + \Rspan{X^*_{2n}, \ldots, X^*_1}
	\end{align*}
with $\rp \in \R \setminus \{ 0 \}$. Accordingly, we denote the representations by $\rho_{\rp}$ and call them the Schr\"{o}dinger representations of $\H$ of parameter $\rp \in \R \setminus \{ 0 \}$. Since the subalgebra $$\pid := \Rspan{X_{2n+1}, X_{2n}, \ldots, X_{n+1}} \nsubgr \h$$ is polarizing for all representatives $\rp X^*_{2n+1}$, $\rp \in \R \setminus \{ 0 \}$, and $\R^n \cong \PID \rquo \H$ for $\PID := \exp_{\H} (\pid)$, each $\rho_{\rp}$ can be realized in $\HS_{\rho_{\rp}} = \L{2}{\R^n}$ as the representation
$\rho_{\rp} := \indR{\chi_{\rp X^*_{2n+1}}}{\PID}{\HG{n}{2}}$ induced by the character
	\begin{align*}
		\chi_{\rp X^*_{2n+1}}: \PID \to \CF: m \mapsto e^{2 \pi i \bracket{\rp X^*_{2n+1}}{\log(m)}}.
	\end{align*}
Expressed in the exponential coordinates of \eqref{GrLawH}, the representation $\rho_{\rp}$ acts on $f \in \L{2}{\R^n}$ by
	\begin{align}
		\bigl( \rho_{\rp}(x_{2n+1}, \ldots, x_1)f \bigr)(t_n, \ldots, t_1) = e^{2 \pi i \rp \bigl( x_{2n+1} + \frac{1}{2} \sum_{j 1}^n x_j (x_{j+n} + 2 t_j) \bigr)} \hspace{1pt} f(t_n + x_n, \ldots, t_1 + x_1). \label{SchrRep}
	\end{align}
If we group the variables as in \eqref{GrLawHgv} and, accordingly, set $t := (t_n, \ldots, t_1)$, this identity shortens to
	\begin{align}
		\bigl( \rho_{\rp}(\gv{x}_3, \gv{x}_2, \gv{x}_1)f \bigr)(t) = e^{2 \pi i \rp \bigl( \gv{x}_3 + \frac{1}{2} \bracket{\gv{x}_1}{\gv{x}_2} + \bracket{\gv{x}_2}{t} \bigr)} \hspace{1pt} f(t + \gv{x}_1). \nn 
	\end{align}
The action of the infinitesimal representation $d\rho_{\rp}$ of $\h$ can be written out for the smooth vectors $\HS^\infty_{\rho_{\rp}} \cong \SF{n}$ of $\rho_{\rp}$:
	\begin{equation} \label{InfRepH}
	\left\{ \begin{array}{rcl}
		d\pi_{\rp}(X_{2n+1}) f  &=& 2 \pi i \rp \hspace{2pt} f, \\
		d\pi_{\rp}(X_{j+n}) f &=& 2 \pi i  \rp \hspace{2pt}  t_j \hspace{1pt} f, \hspace{80pt} j = 1, \ldots, n. \\
		d\pi_{\rp}(X_j) f &=& \partial_{t_j} f,
	\end{array}\right. \nn
	\end{equation}
The latter two identities indicate the strong connection with the harmonic oscillator, which
following the conventions of Stein's monograph~\cite{Stein}, is defined as the positive definite essentially self-adjoint operator on $\L{2}{\R^n}$ given by
	\begin{align*}
		- \Delta + \Abs{t}^2 = \sum_{j = 1}^n (- \partial_{t_j}^2 + t_j^2 ).
	\end{align*}
We recall that this operator has discrete spectrum of eigenvalues $\l_s = (2 \Abs{s} + n)$ for $s \in \Z^n$, and the Hermite functions form an eigenbasis of $\L{2}{\R^n}$; for a proof we refer to Stein~\cite[Ch.~XII~\S~6]{Stein} or Folland~\cite[Ch.~1~\S~7]{FollPhSp}. However, the factors $(2 \pi i)$ in \eqref{InfRepH} disturb the connection between the sum of squares $\SL_{\H} \in \UEA{\h}$ and the harmonic oscillator. At the modest price of a slight rescaling, yet retaining all the interesting properties, we modify the definition to fit within our framework.

	\begin{dfn} \label{DefHO}
The harmonic oscillator on $\R^n$ is defined to be the positive definite essentially self-adjoint operator $\HO$ on $\L{2}{\R^n}$ given by
	\begin{align*}
		\HO := - \Delta + 4 \pi^2 \hspace{1pt} \Abs{t}^2 = \sum_{j = 1}^n (- \partial_{t_j}^2 + 4 \pi^2 t_j^2 ).
	\end{align*}
	\end{dfn}

With this definition at hand, we can formulate the connection between $\SL_{\H}$ and $\HO$ and summarize some important properties arising from it. For a proof we again refer to \cite{FollPhSp} or \cite{Stein}.

	\begin{prop}
Let $\bigl\{ \rho_{\rp} \bigr \}_{\rp \in \R \setminus \{ 0 \}}$ be the Schr\"{o}dinger representations of the Heisenberg group $\H$, realized as in \eqref{SchrRep}, and let $\rho = \rho_1$. Let $\HO$ be the harmonic oscillator on $\R^n$ fixed by Definition~\ref{DefHO}. Then for $\RF := - \sum_{j = 1}^{2n} X^2_j$ we have 
	\begin{align*}
		- \SL_{\H} &= dR(\RF), \\
		\HO &= d\rho(\RF).
	\end{align*}
For $\rp \in \R \setminus \{ 0 \}$ the operators
	\begin{align*}
		\HO^\rp := d\rho_{\rp}(\RF) = - \Delta + 4 \pi^2 \rp^2 \hspace{1pt} \Abs{t}^2
	\end{align*}
are positive definite and essentially self-adjoint on $\L{2}{\R^n}$ with discrete spectrum. Their eigenvalues are given by 
	\begin{align}
		\l_{s, \rp} = 2 \pi \Abs{\rp} (2 \Abs{s} + n), \hspace{20pt} s \in \Z^n. \label{EigValHO}
	\end{align}
	\end{prop}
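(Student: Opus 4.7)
The plan is to verify the three assertions directly from the definitions, treating the identification of $dR(P)$ and $d\rho_\kappa(P)$ as essentially symbol-pushing exercises, and then quoting the classical theory of the Hermite operator (plus one rescaling) for the spectral statements.

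First, $-\mathcal{L}_{\mathbf{H}_n} = dR(P)$ is immediate from the definition of the sub-Laplacian, since by the conventions set in the paper $\mathcal{L}_{\mathbf{H}_n} = dR\bigl(\sum_{j=1}^{2n} X_j^2\bigr)$, and $P = -\sum_{j=1}^{2n} X_j^2$ by definition; this step is really a consistency check once one confirms via the explicit coordinate formula for $\mathcal{L}_{\mathbf{H}_n}$ given earlier in the section that the indices run over the entire first stratum.

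Second, to compute $d\rho_\kappa(P)$ I would differentiate the action formula~\eqref{SchrRep} at the identity in each coordinate direction, recovering
\[
 d\rho_\kappa(X_j) = \partial_{t_j}, \qquad d\rho_\kappa(X_{j+n}) = 2\pi i\,\kappa\, t_j \quad (j = 1,\ldots,n), \qquad d\rho_\kappa(X_{2n+1}) = 2\pi i\,\kappa.
\]
Squaring and summing then gives
\[
 d\rho_\kappa(P) = -\sum_{j=1}^{n} \partial_{t_j}^2 - \sum_{j=1}^{n} (2\pi i\,\kappa\, t_j)^2 = -\Delta + 4\pi^2 \kappa^2 |t|^2,
\]
which specializes to $\mathcal{Q}_{\mathbb{R}^n} = d\rho(P)$ at $\kappa = 1$. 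A brief remark is needed to justify that this identity, initially valid on $\mathscr{S}(\mathbb{R}^n) = \mathcal{H}^\infty_{\rho_\kappa}$, extends to the full operator.

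Third, for positive definiteness, essential self-adjointness and discreteness of the spectrum, I would invoke the standard theory of the Euclidean harmonic oscillator (Folland~\cite{FollPhSp}, Stein~\cite{Stein}): on Schwartz functions, $-\Delta + 4\pi^2 \kappa^2 |t|^2$ is manifestly symmetric and bounded below by $0$, and essential self-adjointness follows from the standard argument (e.g., Nelson's analytic vector theorem applied to Hermite functions, which are analytic vectors and span a dense subspace).

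Finally, for the eigenvalues I would reduce to the standard Hermite case by a dilation. Setting $\alpha := \sqrt{2\pi|\kappa|}$, the unitary $U_\alpha f(t) := \alpha^{n/2} f(\alpha t)$ conjugates $\mathcal{Q}^\kappa_{\mathbb{R}^n}$ into $2\pi|\kappa|\bigl(-\Delta_s + |s|^2\bigr)$; since the rescaled Hermite basis is an eigenbasis of $-\Delta + |t|^2$ with eigenvalues $(2|s|+n)$ for $s \in \mathbb{N}_0^n$, one reads off $\lambda_{s,\kappa} = 2\pi|\kappa|(2|s|+n)$, matching~\eqref{EigValHO}. The main obstacle is really only careful bookkeeping of the factors of $2\pi$ and $\kappa$ introduced by the particular normalization of $\rho_\kappa$ chosen in~\eqref{SchrRep}; the analytic content is entirely classical.
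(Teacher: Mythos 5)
Your proof is correct and essentially coincides with the approach the paper has in mind: the paper does not prove this proposition itself but instead defers to Folland~\cite{FollPhSp} and Stein~\cite{Stein}, and your argument is precisely the standard one from those sources---direct computation of $d\rho_\kappa$ on the basis vectors, squaring and summing to identify $d\rho_\kappa(P)$ with $-\Delta + 4\pi^2\kappa^2|t|^2$, self-adjointness via the Hermite analytic-vector argument, and a unitary dilation $U_\alpha$ with $\alpha=\sqrt{2\pi|\kappa|}$ to reduce to the normalized Hermite operator and read off the eigenvalues. The only cosmetic discrepancy is that the paper's index set $s\in\Z^n$ in~\eqref{EigValHO} should really be $s\in\N_0^n$ as in your version; this is a slip in the paper, not a gap in your proof.
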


\section{On Positive Rockand Operators and Their Spectra} \label{MachinerytERob}

In this section we recall one of the main results in ter Elst and Robinson~\cite{tERo} on spectral estimates for operators whose principal part is a positive Rockland form. To this end, we recall the basic definitions in the context of homogeneous Lie groups and Rockland operators. For a detailed exposition of homogeneous, in particular graded, groups and Rockland operators, we refer to our main reference \cite{FiRuMon}, in particular to the Chapters~3 and 4.

	\begin{dfn} \label{DefHomGr}
(i) A family of dilations $\{ D_r \}_{r > 0}$ of a Lie algebra $\Lie{g}$ is a family of Lie algebra automorphisms on $\Lie{g}$ such that
	\begin{align}
		D_r := \exp_{GL(n, \R)} \bigl( A \log(r) \bigr) \label{Dilations}
	\end{align}
for a diagonalizable linear map $A$ on $\Lie{g}$ with positive eigenvalues $\vt_1, \ldots, \vt_{\dim(\Lie{g})}$.

(ii) A homogeneous group is a connected, simply connected Lie group whose Lie algebra is equipped with dilations.

(iii) The eigenvalues of $A$ will be referred to as the weights of $\{ D_r \}_{r > 0}$. The set of weights is denoted by $\mathcal{W}_A$ and the eigenspace of a weight $\vt \in \mathcal{W}_A$ is denoted by $W_\vt$.

(vi) Given a Lie algebra $\Lie{g}$ equipped with dilations $\{ D_r \}_{r > 0}$, we will denote by $\{ D^*_r \}_{r > 0}$ the dilations on the dual Lie algebra $\Lie{g}^*$ obtained by duality.
	\end{dfn}

Let us recall some well-known facts: Every homogeneous group is nilpotent and the dilations $\{ D_r \}_{r > 0}$ lift to the Lie group $G$ via exponential or Malcev coordinates. Every graded group is homogeneous and in this case one can always rescale the dilations' weights so that they are positive integers with one as their greatest common divisor. One then defines the homogeneous dimension of $G$ to be $\hdim := \vt_1+ \ldots + \vt_{\dim(\Lie{g})} = \Tr(A)$. Moreover, every graded group permits a canonical homogeneous structure defined as follows.

	\begin{dfn} \label{CanDilGrGr}
Let $\Lie{g}$ be a graded Lie algebra with gradation $\Lie{g} = \oplus_{k = 1}^N \Lie{g}_k$. Let $r > 0$ and let $A$ be the matrix for which $A X = k X$ for all $X \in \Lie{g}_k$ and $k =1, \ldots, N$. Then we define the canonical dilations associated with the gradation of $\Lie{g}$ by \eqref{Dilations}.
	\end{dfn}

This definition implies that every basis given as the union of bases $\{ X_{k_1}, \ldots, X_{k_{\dimG_k}} \}$ of the direct summands $\Lie{g}_k$ is an eigenbasis of $A$ and $\mathcal{W}_A = \{ 1, \ldots, k \}$. Such a basis is, in particular, a strong Malcev basis for $\Lie{g}$.  A generic family of dilations, however, may have an eigenbasis which is not of this type; eigenvectors for a given weight may, for example, not belong to any direct summand.

Another crucial notion are the so-called homogeneous quasi-norms, which serve as the natural substitute for norms in the sense that they respect the homogeneous group structure.

	\begin{dfn} \label{DefHQN}
A homogeneous quasi-norm on a homogeneous Lie group G is a continuous non-negative function $G \ni g \mapsto \qn{G}{g} \in [0, \infty)$ such that for all $g \in G, r > 0$, we have
	\begin{itemize}
		\item[(i)] $\qn{G}{g^{-1}} = \qn{G}{g}$,
		\item[(ii)] $\qn{G}{D_r(g)} = r \qn{G}{g}$,
		\item[(iii)] $\qn{G}{g} = 0$ if and only if $g$ equals the unit element of $G$.
	\end{itemize}
	\end{dfn}

Ter Elst and Robinson's analysis relies on the use of the homogeneous norm on the dual Lie algebra $\Lie{g}^*$ of a given homogeneous group $G$ which is defined by
	\begin{align}
		\qn{\Lie{g}^*}{l} := \inf \Bigl \{ r > 0 : \bigl \| D^*_{1/r}(l) \bigr \|_{\Lie{g}^*} \leq 1 \Bigr \}, \label{defaultQN}
	\end{align}
where $\Norm{\Lie{g}^*}{\, . \,}$ is the dual norm on $\Lie{g}^*$ of a given norm $\Norm{\Lie{g}}{\, . \,}$ on $\Lie{g}$. However, it may occasionally be more convenient to work with homogeneous quasi-norms on $\Lie{g}^*$ which are equivalent but different from $\qn{\Lie{g}^*}{\, . \,}$. To this end, we give the following definition.

	\begin{dfn} \label{DefHQNdual}
A homogeneous quasi-norm on the dual Lie algebra $\Lie{g}^*$ of a homogeneous Lie group G is a continuous non-negative function $\Lie{g}^* \ni l \mapsto \qn{\Lie{g}^*}{l} \in [0, \infty)$ such that for all $l \in \Lie{g}^*, r > 0$, we have
	\begin{itemize}
		\item[(i)] $\qn{\Lie{g}^*}{l + l'} \leq C ( \qn{\Lie{g}^*}{l} + \qn{\Lie{g}^*}{l'} )$ for some $C \geq 1$,
		\item[(ii)] $\qn{\Lie{g}^*}{D^*_r(l)} = r \qn{\Lie{g}^*}{l}$,
		\item[(iii)] $\qn{\Lie{g}^*}{l} = 0$ if and only if $l = 0$.
	\end{itemize}
	\end{dfn}

It is well known that all homogeneous quasi-norms on $G$ are equivalent, and it is easy to see that all homogeneous quasi-norms on $\Lie{g}^*$ for a given family of dilations are equivalent, in particular they are equivalent to the quasi-norm defined by \eqref{defaultQN}; therefore all equivalent quasi-norms on $\Lie{g}^*$ induce the Euclidean topology.
Throughout this article we will use a very convenient type of homogeneous quasi-norm on $\Lie{g}^*$, which facilitates our computations enormously.

	\begin{prop} \label{QNGrGr}
Let $G$ be a graded group of topological dimension $\dimG$ equipped with a family of dilations $\{ D_r \}_{r > 0}$ defined by a weight matrix $A$. Let $\hdim$ be the associated homogeneous dimension of $G$ and let $\{ D^*_r \}_{r > 0}$ be the dilations on $\Lie{g}^*$ defined by duality. Let $\vt_1, \ldots, \vt_M$ be the distinct elements of the set of weights $\mathcal{W}_A$. Given an eigenbasis $\{ X_\dimG, \ldots, X_1 \} = \{ X_{M_{\dimG_M}}, \ldots, X_{M_1}, \ldots, X_{1_{\dimG_1}}, \ldots, X_{1_1} \}$, let us express all elements $l \in \Lie{g}^*$ in terms of the dual basis, i.e., $l = l_1 X^*_1 + \ldots + l_\dimG X^*_\dimG$. Then the map $\qn{\Lie{g}^*, A}{\, . \,}^\infty: \Lie{g}^* \to [0, \infty)$ defined by
	\begin{align*}
		\qn{\Lie{g}^*, A}{l}^\infty :=& \max \Bigl \{ \bigl | l_{k_{\dimG_k}} \bigr |^{1/\vt_k}, \ldots, \bigl | l_{k_1} \bigr |^{1/\vt_k} : k = 1, \ldots, M \Bigr \} \\
						&= \max \Bigl \{ \bigl | l_{M_{\dimG_M}} \bigr |^{1/\vt_M}, \ldots, \bigl | l_{M_1} \bigr |^{1/\vt_M}, \ldots, \bigl | l_{1_{\dimG_1}} \bigr |^{1/\vt_1}, \ldots, \bigl | l_{1_1} \bigr |^{1/\vt_1} \Bigr \}
	\end{align*}
is a homogeneous quasi-norm.
	\end{prop}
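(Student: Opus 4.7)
My plan is to verify the three defining properties of Definition~\ref{DefHQNdual} together with continuity and non-negativity of $\qn{\Lie{g}^*, A}{\, . \,}^\infty$. Continuity and non-negativity come for free: the function is a finite maximum of continuous non-negative maps $l \mapsto |l_{k_j}|^{1/\vt_k}$, each of which is a composition of the linear coordinate projection $l \mapsto l_{k_j}$ with the continuous map $t \mapsto |t|^{1/\vt_k}$ on $\R$. Property (iii) is equally immediate, since a maximum of non-negative numbers vanishes iff every term does, and the coefficients $\{l_{k_j}\}$ in the fixed dual basis recover $l$ uniquely.

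For the homogeneity property (ii), I would first record the action of $D^*_r$ on the dual basis. Because $\{X_{k_j}\}$ is an eigenbasis of $A$ with eigenvalue $\vt_k$, i.e.\ $D_r X_{k_j} = r^{\vt_k} X_{k_j}$, the duality between $\Lie{g}$ and $\Lie{g}^*$ forces $D^*_r X^*_{k_j} = r^{\vt_k} X^*_{k_j}$, and hence the $k_j$-th coefficient of $D^*_r l$ is $r^{\vt_k} l_{k_j}$. The exponents $1/\vt_k$ in the definition are engineered precisely so that this dilation factor simplifies to a single $r$ after the power is taken, and since this $r$ is common to every entry it pulls through the maximum, yielding (ii) with no further work.

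The only step with any real content is the quasi-triangle inequality (i). The natural route is to apply the ordinary triangle inequality coordinate-wise, $|l_{k_j} + l'_{k_j}| \leq |l_{k_j}| + |l'_{k_j}|$, and then to raise both sides to the power $1/\vt_k$. In the graded setting the weights are, by the standing rescaling convention, positive integers, so $1/\vt_k \leq 1$ and the function $s \mapsto s^{1/\vt_k}$ is subadditive on $[0, \infty)$; taking the maximum over $(k, j)$ then gives the triangle inequality with $C = 1$. The mild obstacle worth flagging is that outside this convention---if some $\vt_k < 1$---the exponent would exceed $1$ and one would only obtain the weaker bound $(a+b)^{1/\vt_k} \leq 2^{1/\vt_k - 1} \bigl( a^{1/\vt_k} + b^{1/\vt_k} \bigr)$, forcing a constant $C = \max_k 2^{\max(0, 1/\vt_k - 1)}$; in the graded setting of the proposition this complication does not arise.
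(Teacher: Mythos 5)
Your proposal is correct and follows essentially the same route as the paper's proof: coordinate-wise triangle inequality, subadditivity of $t\mapsto t^{1/\vt_k}$, and then the maximum over all coordinates, together with the (routine) verification of properties (ii), (iii), continuity and non-negativity which the paper dismisses as obvious. The one refinement you add is the explicit observation that under the graded-group rescaling convention all $\vt_k\geq 1$, so each subadditivity constant equals $1$ and the map satisfies the genuine triangle inequality; the paper simply invokes unspecified constants $C_{1/\vt_k}$ and takes $C$ to be their maximum.
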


	\begin{proof}
The proof is quite straightforward. To show (i) of Definition~\ref{DefHQNdual}, we find that
	\begin{align*}
		\qn{\Lie{g}^*, A}{l + l'}^\infty &= \max \Bigl \{ \bigl | l_{k_{\dimG_k}} \bigr |^{1/{\vt_k}}, \ldots, \bigl | l_{k_1} \bigr |^{1/\vt_k} : k = 1, \ldots, M \Bigr \} \\
		& \leq \max \Bigl \{ C_{1/{\vt_k}} \bigl ( \bigl | l_{k_{\dimG_k}} \bigr |^{1/\vt_k} + \bigl | l'_{k_{\dimG_k}} \bigr |^{1/\vt_k} \bigr ), \ldots, C_{1/k} \bigl ( \bigl | l_{k_1} \bigr |^{1/\vt_k} + \bigl | l'_{k_1} \bigr |^{1/\vt_k} \bigr ) : k = 1, \ldots, M  \Bigr \}
	\end{align*}
for $M-1$ constants $C_{1/M}, \ldots C_{1/2} > 0$ and $C_1 := 1$. If we set $C := \max \{ C_{1/M}, \ldots, C_1 \}$, then
	\begin{align*}
		\qn{\Lie{g}^*, A}{l + l'}^\infty &\leq C \max \bigl \{ \bigl | l_{M_{\dimG_M}} \bigr |^{1/{\vt_M}} + \bigl | l'_{M_{\dimG_M}} \bigr |^{1/{\vt_M}}, \ldots, \bigl | l_{1_1} \bigr |^ {1/\vt_1}+ \bigl | l'_{1_1} \bigl |^{1/\vt_1} \bigr \} \\
		& \leq C \bigl( \qn{\Lie{g}^*, A}{l}^\infty + \qn{\Lie{g}^*, A}{l'}^\infty \bigr).
	\end{align*}
The properties (ii) and (iii) are obviously satisfied.
	\end{proof}

The canonical dilations associated to a given gradation of a gradable Lie algebra (cf.~Definition~\ref{CanDilGrGr}) give rise to a specific example of $\qn{\Lie{g}^*, A}{\, . \,}^\infty$.

	\begin{cor} \label{CQNGrGr}
Let $G$ be a graded group of topological dimension $\dimG$ with gradation $\Lie{g} = \oplus_{k = 1}^N \Lie{g}_k$ and set $\dimG_k :=\dim(\Lie{g}_k)$ for $j = 1, \ldots, N$. Let $G$ be equipped with the canonical homogeneous structure from Definition~\ref{CanDilGrGr} and let $\{ D^*_r \}_{r > 0}$ be the dilations on $\Lie{g}^*$ defined by duality. Given a strong Malcev basis $\{ X_\dimG, \ldots, X_1 \} = \{ X_{N_{\dimG_N}}, \ldots, X_{N_1}, \ldots, X_{1_{\dimG_1}}, \ldots, X_{1_1} \}$ passing through the gradation, let us express all elements $l \in \Lie{g}^*$ in terms of the dual basis, i.e., $l = l_1 X^*_1 + \ldots + l_\dimG X^*_\dimG$. Then the map $\qn{\Lie{g}^*}{\, . \,}^\infty: \Lie{g}^* \to [0, \infty)$ defined by
	\begin{align*}
		\qn{\Lie{g}^*}{l}^\infty :=& \max \Bigl \{ \bigl | l_{k_{\dimG_k}} \bigr |^{1/k}, \ldots, \bigl | l_{k_1} \bigr |^{1/k} : k = 1, \ldots, N \Bigr \} \\
						&= \max \Bigl \{ \bigl | l_{N_{\dimG_N}} \bigr |^{1/N}, \ldots, \bigl | l_{N_1} \bigr |^{1/N}, \ldots, \bigl | l_{1_{\dimG_1}}  \bigr |, \ldots, \bigl | l_{1_1} \bigr | \Bigr \}
	\end{align*}
is a homogeneous quasi-norm.
	\end{cor}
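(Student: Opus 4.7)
The plan is to deduce Corollary \ref{CQNGrGr} as an immediate specialization of Proposition \ref{QNGrGr}. First, I would identify the weight data of the canonical homogeneous structure from Definition \ref{CanDilGrGr}: the matrix $A$ acts by $A X = k X$ on each direct summand $\Lie{g}_k$, so the set of distinct weights is $\mathcal{W}_A = \{ 1, 2, \ldots, N \}$. In the notation of Proposition \ref{QNGrGr} this means $M = N$ and $\vt_k = k$ for $k = 1, \ldots, N$, and the associated homogeneous dimension is $\hdim = \sum_{k=1}^N k\, \dimG_k = \Tr(A)$.

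Next, I would verify that a strong Malcev basis $\{ X_{N_{\dimG_N}}, \ldots, X_{1_1} \}$ passing through the gradation is an eigenbasis of $A$ of the exact type required by Proposition \ref{QNGrGr}. Indeed, by the assumption that the basis passes through the gradation, each subfamily $\{ X_{k_1}, \ldots, X_{k_{\dimG_k}} \}$ lies in (and spans) $\Lie{g}_k$, so every basis vector $X_{k_j}$ is an eigenvector of $A$ with eigenvalue $k = \vt_k$. The dual basis $\{ X_1^*, \ldots, X_\dimG^* \}$ then diagonalizes $\{ D^*_r \}_{r>0}$ accordingly.

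Finally, I would substitute $\vt_k = k$ into the formula furnished by Proposition \ref{QNGrGr},
\[
    \qn{\Lie{g}^*, A}{l}^\infty = \max \Bigl\{ \bigl| l_{k_{\dimG_k}} \bigr|^{1/\vt_k}, \ldots, \bigl| l_{k_1} \bigr|^{1/\vt_k} : k = 1, \ldots, M \Bigr\},
\]
which immediately produces the expression
\[
    \qn{\Lie{g}^*}{l}^\infty = \max \Bigl\{ \bigl| l_{k_{\dimG_k}} \bigr|^{1/k}, \ldots, \bigl| l_{k_1} \bigr|^{1/k} : k = 1, \ldots, N \Bigr\}
\]
claimed in the corollary. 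Since Proposition \ref{QNGrGr} guarantees that this function satisfies (i)--(iii) of Definition \ref{DefHQNdual}, the same conclusion holds here.

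There is no real obstacle; the only point that requires attention is the compatibility check between the two notions of eigenbasis -- namely, that a strong Malcev basis passing through the gradation is automatically of the form required by the proposition. This is clear from the construction of $A$ in Definition \ref{CanDilGrGr}, so the corollary follows without any further computation.
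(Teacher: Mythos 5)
Your proposal is correct and matches the paper's own approach: the paper explicitly introduces Corollary~\ref{CQNGrGr} as the specialization of Proposition~\ref{QNGrGr} to the canonical dilations of Definition~\ref{CanDilGrGr}, which is exactly the substitution $M = N$, $\vt_k = k$ that you carry out. The compatibility check you flag (that a strong Malcev basis passing through the gradation is an eigenbasis of $A$) is precisely the observation the paper relies on, so nothing further is needed.
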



The rest of this section focuses on Rockland operators and the crucial results from ter Elst and Robinson~\cite{tERo} we will use throughout the article. Note that our definition follows \cite{tERo} and thus varies slightly from the equivalent Definition~4.1.2 in \cite{FiRuMon}.

	\begin{dfn} \label{DefRock}
Let $G$ be a homogeneous Lie group and let $\RF \in \UEA{\Lie{g}}$ be homogeneous of positive degree. We say that $\RF$ is a ``Rockland form'' if for every non-trivial unirrep $\pi \in \widehat{G}$ the operator $d\pi(\RF)$ is injective on the smooth vectors $\RS^\infty$, that is,
	\begin{align}
		\forall v \in \RS^\infty: \hspace{10pt} d\pi(\RF)v = 0 \Rightarrow v = 0. \label{RockPr}
	\end{align}
A ``Rockland operator'' on $G$ is a left-invariant differential operator $\RO$ with $\RO = dR(\RF)$ for some Rockland form $\RF \in \UEA{\Lie{g}}$, that is, an operator which is the image under the infinitesimal right regular representation of some Rockland form $\RF$.
	\end{dfn}

We recall the main tools for our analysis, \cite[Thm.~4.1]{tERo}:

	\begin{thm} \label{ThmtERo}
Let $G$ be a homogeneous Lie group and let $\Lie{g}^*$ be equipped with a homogeneous norm. Let $\pi \in \widehat{G}$ with coadjoint orbit $\Orbit_\pi$. Let $\om_\pi$ be the orbital measure on $\Orbit_\pi$ and for $\l > 0$ let
	\begin{align*}
		N_0(\l, \pi) := \om_\pi \bigl( \bigl \{ l \in \Orbit_\pi : \qn{\Lie{g}^*}{l} \leq \l \bigr \} \bigr).
	\end{align*}
Moreover, let $\RF$ be a positive Rockland form of homogeneous degree $\hdeg$ and let $N(\l, \pi, \RF)$ be the number of eigenvalues of $d\pi(\RF)$, counted with multiplicities, which are less or equal to $\l$. Then there exists a constant $c > 0$ such that
	\begin{align}
		c^{-1} \hspace{1pt} N_0(\l, \pi) \leq N(\l^\hdeg, \pi, \RF) \leq c \hspace{1pt} N_0(\l, \pi) \label{SpecAsymp}
	\end{align}
uniformly for all $\l > 0$ and all unirreps $\pi \in \widehat{G}$.
	\end{thm}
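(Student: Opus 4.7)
The plan is to reduce the claim to the single scale $\l = 1$ by exploiting the homogeneity of $\RF$ and the dilation covariance of coadjoint orbits, and then to match the spectral counting function with the orbital integral through a heat-kernel/Tauberian argument. Concretely, for each $r > 0$ the pulled-back representation $\pi^r(g) := \pi(D_r g)$ is again a unirrep satisfying $d\pi^r(\RF) = r^\hdeg d\pi(\RF)$ and having coadjoint orbit $D^*_r \Orbit_\pi$. Tracking how $N$ and $N_0$ scale under this operation, one sees that the desired bound for arbitrary $(\l, \pi)$ reduces to the bound at $\l = 1$ applied to a suitably dilated representation, so it suffices to prove \eqref{SpecAsymp} at $\l = 1$ with a constant uniform in $\pi \in \widehat{G}$.

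The next step is to establish the spectral structure of $d\pi(\RF)$. Positivity is assumed, while the Rockland condition (via the Helffer--Nourrigat / Hulanicki theory of homogeneous hypoelliptic operators) implies that $d\pi(\RF)$ extends to a positive self-adjoint operator on $\RS$ with compact resolvent, hence discrete spectrum accumulating only at $+\infty$. The key analytic input is then a Kirillov/Plancherel-type trace identity for the associated heat semigroup,
\begin{equation*}
\Tr \bigl( e^{-t\, d\pi(\RF)} \bigr) = \int_{\Orbit_\pi} e^{-t \sigma_{\RF}(l)} \, d\om_\pi(l), \qquad t > 0,
\end{equation*}
in which $\sigma_{\RF}$ is the symbol of $\RF$ canonically attached to $\Orbit_\pi$ by the orbit method: it is continuous, $D^*_r$-homogeneous of degree $\hdeg$, and strictly positive away from $0$ thanks to the Rockland property.

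Because $\sigma_{\RF}$ extends to a single continuous function on $\Lie{g}^*$ that is homogeneous of degree $\hdeg$ and strictly positive on the unit quasi-sphere, the compactness of this quasi-sphere furnishes uniform constants $a, A > 0$ with $a \, \qn{\Lie{g}^*}{l}^\hdeg \leq \sigma_{\RF}(l) \leq A \, \qn{\Lie{g}^*}{l}^\hdeg$ on all of $\Lie{g}^*$. Combining this pinching with the Laplace representations $\Tr(e^{-t d\pi(\RF)}) = \int_0^\infty e^{-ts}\, dN(s, \pi, \RF)$ and $\int_{\Orbit_\pi} e^{-t\sigma_\RF}\, d\om_\pi = \int_0^\infty e^{-ts}\, dM(s, \pi)$, where $M(s, \pi) := \om_\pi(\{l \in \Orbit_\pi : \sigma_\RF(l) \leq s\}) \asymp N_0(s^{1/\hdeg}, \pi)$, a Karamata-type Tauberian theorem with uniform slow-variation constants converts the two-sided comparison of Laplace transforms into the pointwise bound \eqref{SpecAsymp}.

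The main obstacle is precisely the uniformity in $\pi$ throughout: the trace identity, the symbol pinching, and the Tauberian passage must all hold with constants independent of the representation. The scaling step collapses the $\l$-continuum, but uniformity across $\widehat{G}$ at $\l = 1$ ultimately rests on the fact that $\sigma_\RF$ is a single globally defined function on $\Lie{g}^*$, so that the compactness of one unit quasi-sphere produces constants that serve every orbit simultaneously.
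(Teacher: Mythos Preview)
The paper does not prove this theorem at all: it is quoted verbatim as \cite[Thm.~4.1]{tERo} and used as a black box throughout. So there is no in-paper argument to compare your sketch against; any comparison would have to be with ter~Elst--Robinson's original proof.

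On the substance of your sketch, the dilation reduction to $\l = 1$ is correct and is indeed the first move in \cite{tERo}. The real gap is the heat-trace identity you invoke. There is no canonical function $\sigma_{\RF}$ on $\Lie{g}^*$ for which
\[
\Tr\bigl(e^{-t\,d\pi(\RF)}\bigr) \;=\; \int_{\Orbit_\pi} e^{-t\,\sigma_{\RF}(l)}\,d\om_\pi(l)
\]
holds exactly on a general homogeneous group. What the Kirillov character formula actually gives is $\Tr\bigl(\pi(p_t)\bigr) = \int_{\Orbit_\pi} \widehat{p_t}(l)\,d\om_\pi(l)$, where $p_t$ is the heat kernel on $G$ and $\widehat{p_t}$ its \emph{abelian} Fourier transform on $\Lie{g}\cong\Lie{g}^*$. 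This function is not of the form $e^{-t\sigma_\RF}$; it carries the full non-commutative convolution structure of $G$. The proof in \cite{tERo} proceeds by establishing two-sided Gaussian-type bounds on $\widehat{p_t}$ itself (via hypoelliptic heat-kernel estimates on $G$), not by appealing to a symbol of $\RF$. Your pinching step ``$a\,\qn{\Lie{g}^*}{l}^\hdeg \le \sigma_\RF(l) \le A\,\qn{\Lie{g}^*}{l}^\hdeg$'' therefore has no object to apply to.

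A second, smaller issue: once you have reduced to $\l=1$, invoking a Karamata-type Tauberian theorem is not the right tool---Tauberian theorems produce asymptotics as $\l\to\infty$, not a bound at a single scale uniformly over all $\pi$. What is actually needed (and what \cite{tERo} does) is a direct comparison at $t=1$: the upper bound $N(1,\pi,\RF)\le e\cdot\Tr(e^{-d\pi(\RF)})$ is immediate, while the lower bound requires a separate argument combining heat-kernel positivity with the estimates on $\widehat{p_t}$.
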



Since any two quasi-norms on $\Lie{g}^*$ equipped with a homogeneous structure are equivalent, the asymptotic distribution of eigenvalues $N(\l, \pi, \RF')$ can be computed by using any quasi-norm. In each concrete example of this article, we choose the most convenient one.

The following two examples show how efficient ter Elst and Robinson's techniques are. The first example discusses the harmonic oscillator on $\R^n$, the second one a family of anharmonic oscillators on $\R$, which was treated in \cite{tERo}. Although the family of anharmonic oscillators includes the harmonic oscillator, the argument for a specific anharmonic case is hardly more involved than the harmonic case. For this reason and the sake of clarity, we treat the first example quite explicitly in $n$ dimensions and only sketch the anharmonic oscillators in one dimension.

	\begin{ex} [The Harmonic Oscillator on $\R^n$] \label{ExHORn}
Although the eigenvalues \eqref{EigValHO} of the harmonic oscillator $\mathcal{Q}_{\R^n}$ are very well known, we could, in ignorance of their precise values, apply ter Elst and Robinson's method right away to estimate their asymptotic distribution. Since for the Schr\"{o}dinger representation $\rho = \rho_1$ on $\L{2}{\R^n}$ the positive Rockland form $\RF := - \sum_{j = 1}^{2n} X^2_j \in \UEA{\h}$ satisfies 
	\begin{align*}
		dR(\RF) &= - \SL_{\H}, \\
		d\rho(\RF) &= \mathcal{Q}_{\R^n},
	\end{align*}
we can apply Theorem~\ref{ThmtERo} and eventually obtain
	\begin{align}
		N(\l, \rho, \RF) \asymp \l^n \hspace{20pt} \mbox{ and } \hspace{20pt} \l_s \asymp s^{\frac{1}{n}}, \hspace{5pt} s = 1, 2, \ldots, \label{SpecEstHO}
	\end{align}
which is precisely what we asymptotically have for the eigenvalues $\l_s = 2 \Abs{s} + n$ of $\mathcal{Q}_{\R^n}$.

The explicit argument goes as follows. On each coadjoint orbit $\Orbit_{\rp X_{2n+1}^*}, \rp \in \R \setminus \{ 0 \},$ the orbital measure is given by $\om_{\rho_\rp} = \Abs{\rp}^{-n} \hspace{1pt} d\Leb_{\R^{2n}}$, that is, a weighted version of the $2n$-dimensional Lebesgue measure on the affine subspace $\Orbit_{\rp X_{2n+1}^*} \subseteq \h^*$. This follows directly from the direct integral representation $\Leb_{\h^*} = \int^\oplus_{\Liez{\h}^*} \om_{\rho_\rp} d\Pla(\rho_{\rp})$ of the Lebesgue measure on $\h^*$ by the Plancherel theorem (cf.~\cite[Thm.~4.3.10]{CoGr}) and the fact that the Plancherel measure on $\Liez{\h}^* \cong \h^*/\coAd(\H) \cong \widehat{\mathbf{H}}_n$ is given by $d\Pla(\rho_{\rp}) = \Abs{\rp}^n \hspace{1pt} d\rp$. So, for $\rho = \rho_1$ we have $\om_{\rho} = \Leb_{\R^{2n}}$. Then by an easy calculation or Proposition~\ref{CQNGrGr} one has that
$\qn{\h^*}{\, . \,}^\infty: \h^* \to [0, \infty)$ defined by
	\begin{align*}
		\qn{\h^*}{l}^\infty := \max \bigl \{ \Abs{l_{x_{2n+1}}}^{1/2}, \Abs{l_{x_{2n}}}, \ldots, \Abs{l_{x_1}} \bigr \}
	\end{align*}
is a homogeneous quasi-norm for the canonical homogeneous structure associated to the natural stratification of $\h$.
If the orbit $\Orbit_{\rp X_{2n+1}^*} = \rp X^*_{2n+1} + \Rspan{X^*_{2n}, \ldots, X^*_1}$ and the closed $\qn{\h^*}{\, . \,}^\infty$-neighborhood of radius $\l^{1/2}$ have non-void intersection, that is, if $\l^{1/2} \leq \rp$, then
	\begin{align*}
		\Leb_{\R^{2n}} \bigl( \bigl \{ l \in \Orbit_{\rp X_{2n+1}^*} : \qn{\Lie{h}_n^*}{l} \leq \l^{1/2} \bigr \} \bigr) &= \Leb_{\R^{2n}} \bigl( \bigl \{ l \in \Orbit_{\rp X_{2n+1}^*} : \Abs{l_{x_{2n+1}}} \leq \l, \Abs{l_{x_{2n}}} \leq \l^{1/2}, \ldots, \Abs{l_{x_1}} \leq \l^{1/2} \bigr \} \bigr ) \\
		&= 2^{2n} \l^{2n/2};
	\end{align*}
otherwise we have
	\begin{align*}
		\Leb_{\R^{2n}} \bigl( \bigl \{ l \in \Orbit_{\rp X_{2n+1}^*} : \qn{\Lie{h}_n^*}{l} \leq \l^{1/2} \bigr \} \bigr) = 0.
	\end{align*}
Thus, we have
	\begin{align*}
		N(\l, \rho_{\rp}, \RF) \asymp N_0(\l^{1/2}, \rho_{\rp}) 
		\asymp
		 \left\{ \begin{array}{lcr}
		  \l^n &\mbox{ if }& \l^{1/2} \leq \rp, \\
		  0 &\mbox{ otherwise.}&
		  \end{array} \right.
	\end{align*}
The second estimate in \eqref{SpecEstHO} follows from the observation that for small enough $\e > 0$ and large enough $s \in \N$ there exists a constant $c > 0$ such that for all $s \in \N$ we have
	\begin{align*}
		c^{-1} \hspace{1pt} (\l_s - \e)^n \leq N(\l_s - \e, \rho_{\rp}, \RF) \leq s \leq N(\l_s, \rho_{\rp}, \RF) \leq c \hspace{1pt} \l^n_s,
	\end{align*}
or equivalently, $\l_s \asymp s^{\frac{1}{n}}$. In particular, this holds for $\rp = 1$, which implies \eqref{SpecEstHO}.
	\end{ex}

	\begin{ex} [Anharmonic Oscillators on $\R$] \label{ExAO}
The example of the following family of anharmonic oscillators on $\R$ was treated in \cite{tERo}. For the homogeneous structure on $\Lie{h}_1$ defined by the modified dilations
	\begin{align*}
		D_r(X_3) = r^{\vt_1 + \vt_2} X_3, \hspace{10pt} D_r(X_2) = r^{\vt_2} X_2, \hspace{10pt} D_r(X_1) = r^{\vt_1} X_1 \hspace{10pt} \mbox{ for fixed } \vt_1, \vt_2 \in \N,
	\end{align*}
the form $\RF := (-1)^{\vt_2} \hspace{1pt} X_1^{2 \vt_2} + (-1)^{\vt_1} \hspace{1pt} X_2^{2 \vt_1}$ is positive and Rockland of homogeneous degree $\hdeg = 2 \vt_1 \vt_2$. By employing a convenient homogeneous quasi-norm on $\Lie{h}_1^*$, e.g., the one from Proposition~\ref{QNGrGr}, the same argument as above yields 
	\begin{align*}
		N(\l, \rho, \RF) \asymp \l^{(\vt_1 + \vt_2)/2 \vt_1 \vt_2} \hspace{20pt} \mbox{ and } \hspace{20pt} \l_s \asymp s^{2 \vt_1 \vt_2/(\vt_1 + \vt_2)}, \hspace{5pt} s = 1, 2, \ldots.
	\end{align*}
The spectral asymptotics of this type of anharmonic oscillator with additional polynomial potential were established in Helffer and Robert~\cite{HeRo82a, HeRo82b}. A recent article by Chatzakou, Delgado and Ruzhansky~\cite{ChDeRu18} investigates the spectral properties for negative powers of anharmonic oscillators in terms of Schatten-von Neumann classes using the Weyl-H\"{o}rmander theory.
	\end{ex}

\section{The Dynin-Folland Group $\HG{n}{2}$} \label{IntroDF}

The $3$-step connected, simply connected nilpotent Lie group we call the Dynin-Folland group $\HG{n}{2}$ was introduced in Dynin~\cite{Dyn1} in order to study a natural type of Weyl quantization of pseudo-differential operators on the Heisenberg 
group $\H$ and was studied in detail in Folland~\cite{FollMeta} as a specific example of the so-called
meta-Heisenberg groups. Recently, Fischer, Rottensteiner and Ruzhansky~\cite{FiRoRu} characterized the coorbit spaces related to $\HG{2}{n}$.

The group's generic representations and the homogeneous structure related to the natural stratification permit a natural notion of harmonic oscillator on $\H$ and give access to the asymptotic distribution of the oscillator's spectrum.

Note that our general notation was chosen in accordance with Corwin and Greenleaf's monograph~\cite{CoGr}. Our references for homogeneous, graded and stratified Lie groups are the monographs Folland and Stein~\cite{FoSt} and Fischer and Ruzhansky~\cite{FiRuMon}

\subsection{Dynin's Original Lie Algebra} \label{Dynin'sLieAlg}

In the case of the lowest dimension the Dynin-Folland group $\HG{1}{2}$ is a semi-direct
product $\HG{1}{2} = \R ^4 \rtimes \mathbf{H}_1$ of $\R^4$ and the $3$-dimensional Heisenberg group $\mathbf{H}_1$.
Its $3$-step nilpotent Lie algebra $\HA{1}{2}$ is defined by a strong Malcev basis $\{ Z, Y_1, Y_2, Y_3, X_3, X_2,
X_1 \}$ with Lie brackets
	\begin{align}
	\left[\begin{array}{c|ccc|ccc}
		[\,.\,, \,.\,]&Y_1& Y_2 &Y_3 & X_3 & X_2 &X_1 \\ \hline
		Y_1 &&&&0&0&-Z \\
		Y_2 &&\textnormal{\Large{0}}&&0&-Z&0 \\
		Y_3 &&&&-Z&-\tfrac{1}{2} Y_1&\tfrac{1}{2} Y_2 \\ \hline
		X_3 &0&0&Z&0&0&0 \\
		X_2 &0&Z&\tfrac{1}{2} Y_1&0&0&-X_3 \\
		X_1 &Z&0&-\tfrac{1}{2} Y_2&0&X_3&0 \\
	\end{array}\right]. \label{LieBracketH21}
	\end{align}
The center of $\HA{1}{2}$ is $\Liez{\HA{1}{2}} = \R Z 
$ and is one-dimensional, the Heisenberg Lie algebra $\mathfrak{h}_1=
\Rspan{ X_3, X_2, X_1}$ is a subalgebra of $\HA{1}{2}$ and its complement
$\pid:= \Rspan{Z, Y_1, Y_2, Y_3}$ is Abelian and  an ideal of $\HA{1}{2}$.
We refer to \cite{Dyn1} and \cite{FollMeta} for the quantization-theoretical intuition behind the definition of the Dynin-Folland group.

Choosing $l = \rp Z^* \in \Liez{\mathfrak{h}_1}^*$ with $\rp \in \R \setminus \{ 0 \}$,  the matrix representation of the symplectic form $B_l$ is given by
	\begin{align} \label{eq:c26}
	[B_l]
	=
	\left[\begin{array}{c|ccc|ccc}
		l([\,.\,, \,.\,])&Y_1& Y_2 &Y_3 & X_3 & X_2 &X_1 \\ \hline
		Y_1 &&&&0&0&-\rp \\
		Y_2 &&\textnormal{\Large{0}}&&0&-\rp&0 \\
		Y_3 &&&&-\rp&0&0 \\ \hline
		X_1 &0&0&\rp&&& \\
		X_2 &0&\rp&0&&\textnormal{\Large{0}}& \\
		X_3 &\rp&0&0&&& \\
	\end{array}\right].
	\end{align}
Consequently, $B_l$ is non-degenerate; its Pfaffian is given $\Pf(l) = \Abs{\rp}^3$ and the
coadjoint orbit $\Orbit_l$ is the $6$-dimensional affine subspace $\rp
Z^* + \Rspan{Y_1^*, \ldots, X_1^*}$. It follows that, up to Plancherel measure zero, all coadjoint orbits are flat and agree precisely with the orbits $\Orbit_{\rp Z^*}, \rp \in \R \setminus \{ 0 \}$.
Since $\pid$ is Abelian and of dimension $4$, it is a polarization for all $l \in \Liez{\HA{1}{2}}^*$; moreover, if $\PID := \exp_{\HG{1}{2}}(\pid)$, then $\PID \rquo \HG{1}{2} \cong \mathbf{H}_1$. Consequently,
 all representations $\pi_{\rp} \in SI/Z(\HG{1}{2})$ can be realized on
 $\L{2}{\mathbf{H}_1}$. In particular, it is convenient to realize the $\pi_{\rp}$ in coordinates arising from the strong Malcev basis $\{
 Z, \ldots, X_1 \}$. The representation corresponding to $l := Z^*$, i.e., for $\rp = 1$, was the object of interest in Dynin's account.
 
Of course, there is a straight-forward generalization to the case $n > 1$ and we discuss it in the following subsection. In particular, we will focus on the natural stratification of $\HA{n}{2}$, which is absent in \cite{Dyn1} and \cite{FollMeta}.

\subsection{A Stratification of $\HA{n}{2}$}

The stratification of $\HA{n}{2}$ possesses a natural homogeneous structure, which will be one of the principal tools for our analysis of the harmonic and anharmonic oscillators on $\H$.

Let us first consider $n=1$. In this case the stratification becomes obvious if we exchange the order of $Y_3$ and $X_3$ for the Lie bracket defined by \eqref{LieBracketH21}. The resulting table of Lie brackets is
	\begin{align*}
	\left[\begin{array}{c|ccc|ccc}
		[\,.\,, \,.\,]&Y_1& Y_2 &X_3 & Y_3 & X_2 &X_1 \\ \hline
		Y_1 &&&&0&0&-Z \\
		Y_2 &&\textnormal{\Large{0}}&&0&-Z&0 \\
		X_1 &&&&Z&0&0 \\ \hline
		Y_3 &0&0&-Z&0&-\tfrac{1}{2} Y_1&\tfrac{1}{2} Y_2 \\
		X_2 &0&Z&0&\tfrac{1}{2} Y_1&0&-X_3 \\
		X_3 &Z&0&0&-\tfrac{1}{2} Y_2&X_3&0 \\
	\end{array}\right]. 
	\end{align*}

	\begin{lem}
The Dynin-Folland Lie algebra $\HA{1}{2}$ admits a stratification $\HA{1}{2} = \Lie{g}_3 \oplus \Lie{g}_2 \oplus \Lie{g}_1$:
        \begin{align*}
        	\Lie{g}_3 := \R Z, \hspace{10pt} \Lie{g}_2 := \Rspan{Y_1, Y_2, X_3}, \hspace{10pt} \Lie{g}_1 := \Rspan{Y_3, X_2, X_1}.
        \end{align*}
	\end{lem}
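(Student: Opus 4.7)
The plan is to check directly from the bracket table \eqref{LieBracketH21} that the three proposed subspaces have the three defining properties of a stratification: (a) they form a vector space direct sum equal to $\HA{1}{2}$; (b) the grading condition $[\Lie{g}_i, \Lie{g}_j] \subseteq \Lie{g}_{i+j}$ holds, with $\Lie{g}_k = 0$ for $k \geq 4$; and (c) the first stratum $\Lie{g}_1$ Lie-generates in the stronger sense that $[\Lie{g}_1, \Lie{g}_1] = \Lie{g}_2$ and $[\Lie{g}_1, \Lie{g}_2] = \Lie{g}_3$. Part (a) is immediate, since the three subspaces are spanned by a partition of the strong Malcev basis $\{ Z, Y_1, Y_2, Y_3, X_3, X_2, X_1 \}$.

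For (b) and (c), I would simply read off the table entry by entry. Among the generators $Y_3, X_2, X_1$ of $\Lie{g}_1$, the only non-vanishing brackets in \eqref{LieBracketH21} are $[X_2, X_1] = -X_3$, $[Y_3, X_2] = -\tfrac{1}{2} Y_1$ and $[Y_3, X_1] = \tfrac{1}{2} Y_2$; these lie in $\Lie{g}_2$ and span it, so $[\Lie{g}_1, \Lie{g}_1] = \Lie{g}_2$. The non-zero brackets between $\Lie{g}_1$ and $\Lie{g}_2$ are $[Y_3, X_3] = -Z$, $[X_2, Y_2] = Z$ and $[X_1, Y_1] = Z$, all landing in $\Lie{g}_3 = \R Z$, which they clearly span, giving $[\Lie{g}_1, \Lie{g}_2] = \Lie{g}_3$. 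Finally, $[\Lie{g}_2, \Lie{g}_2]$ vanishes by inspection, and every bracket involving $Z$ is zero since $\Liez{\HA{1}{2}} = \R Z$; in particular no bracket exceeds the grading bound.

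There is essentially no obstacle here; the argument is a purely combinatorial check on the bracket table. The one conceptual observation worth isolating is that the reordering of basis elements performed just above the lemma realigns the table along the weight assignment $\mathrm{wt}(Z) = 3$, $\mathrm{wt}(Y_1) = \mathrm{wt}(Y_2) = \mathrm{wt}(X_3) = 2$, $\mathrm{wt}(Y_3) = \mathrm{wt}(X_2) = \mathrm{wt}(X_1) = 1$, and every non-trivial entry $[U, V]$ of the table satisfies $\mathrm{wt}([U, V]) = \mathrm{wt}(U) + \mathrm{wt}(V)$; this is the structural reason why the check succeeds and why the stratification is indeed ``natural.''
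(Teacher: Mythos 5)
Your proof is correct and follows essentially the same route the paper takes: the paper presents the reordered bracket table immediately before the lemma and then states the stratification without further argument, implicitly leaving exactly the entry-by-entry verification you carry out. Your bracket computations ($[X_2,X_1]=-X_3$, $[Y_3,X_2]=-\tfrac12 Y_1$, $[Y_3,X_1]=\tfrac12 Y_2$ spanning $\Lie{g}_2$, and $[Y_3,X_3]=-Z$, $[X_2,Y_2]=Z$, $[X_1,Y_1]=Z$ spanning $\Lie{g}_3$) all agree with \eqref{LieBracketH21}, and the observation about the weight assignment nicely explains why the reordering makes the stratification visible.
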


We now generalize this principle for $n \in \N$ and give a definition of $\HA{n}{2}$ which is equivalent to Dynin's originial definition.

	\begin{dfn} \label{DefDF}
Let $n \in \N$. We define the Dynin-Folland Lie algebra $\HA{n}{2}$ to be $\R^{4n+3}$ equipped with the Lie bracket which for the standard basis $\{ Z, Y_1, \ldots, Y_{2n+1}, X_{2n+1}, \ldots, X_1 \}$ satisfies
	\begin{align*}
	\left[\begin{array}{c|ccccccc|ccccccc}
		[\,.\,, \,.\,]&Y_1& \cdots & Y_n & Y_{n+1} & \cdots & Y_{2n} & X_{2n+1} & Y_{2n+1} & X_{2n} & \cdots & X_{n+1} & X_n & \cdots & X_1 \\ \hline
		Y_1 &&&&&&&&&&&&&&-Z \\
		\vdots  &&&&&&&&&&&&&\iddots& \\
		Y_n &&&&&&&&&&&&-Z&& \\
		Y_{n+1} &&&&\textnormal{\Large{0}}&&&&&&&-Z&&& \\
		\vdots &&&&&&&&&&\iddots&&&& \\
		Y_{2n} &&&&&&&&&-Z&&&&& \\
		X_{2n+1} &&&&&&&&Z&&&&&& \\ \hline
		Y_{2n+1} &&&&&&&-Z&&-\frac{1}{2} Y_n& \cdots & -\frac{1}{2} Y_1 & \frac{1}{2} Y_{2n} & \cdots & \frac{1}{2} Y_{n+1}\\
		X_{2n} &&&&&&Z&&\frac{1}{2} Y_n&&&&-X_{2n+1}&& \\
		\vdots &&&&&\iddots&&&\vdots&&&&&\ddots& \\
		X_{n+1} &&&&Z&&&&\frac{1}{2} Y_1&&&&&&-X_{2n+1} \\
		X_n &&&Z&&&&&-\frac{1}{2} Y_{2n}&X_{2n+1}&&&&& \\
		\vdots &&\iddots&&&&&&\vdots&&\ddots&&&& \\
		X_1 &Z&&&&&&&-\frac{1}{2} Y_{n+1}&&&X_{2n+1}&&& \\
	\end{array}\right]. 
	\end{align*}

We define the Dynin-Folland group $\HG{1}{2}$ to be the connected, simply connected nilpotent Lie group obtained by exponentiating $\HA{n}{2}$.
	\end{dfn}

	\begin{rem}
The subalgebra $\Rspan{X_{2n+1}, X_{2n}, \ldots, X_1} \subgr \HA{n}{2}$ is isomorphic to the Heisenberg Lie algebra $\h$.
	\end{rem}

The group $\HG{1}{2}$ admits a stratification and, by Definition~\ref{CanDilGrGr}, a canonical homogeneous structure:

	\begin{lem} \label{LemStrat}
The Dynin-Folland Lie algebra $\HA{n}{2}$ admits a stratification $\HA{n}{2} = \Lie{g}_3 \oplus \Lie{g}_2 \oplus \Lie{g}_1$:
        \begin{align*}
        	\Lie{g}_3 := \R Z, \hspace{10pt} \Lie{g}_2 := \Rspan{Y_1, \ldots, Y_{2n}, X_{2n+1}}, \hspace{10pt} \Lie{g}_1 := \Rspan{Y_{2n+1}, X_{2n}, \ldots, X_1}.
        \end{align*}
	\end{lem}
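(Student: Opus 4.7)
The claim to prove is that the decomposition $\HA{n}{2} = \Lie{g}_3 \oplus \Lie{g}_2 \oplus \Lie{g}_1$ is a stratification, i.e., a direct sum of subspaces with $[\Lie{g}_i, \Lie{g}_j] \subseteq \Lie{g}_{i+j}$ (where $\Lie{g}_k = \{0\}$ for $k \geq 4$) and $\Lie{g}_1$ generating the whole Lie algebra. My plan is to reduce everything to a direct inspection of the bracket table of Definition~\ref{DefDF}.

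First, the direct sum decomposition is immediate from the fact that the listed vectors form a basis of $\R^{4n+3}$: dimensions add up to $1 + (2n+1) + (2n+1) = 4n+3$, matching $\dim \HA{n}{2}$, and the three subspaces are spanned by disjoint subsets of the standard basis. So the only real content lies in checking the bracket inclusions and generation.

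Next, I verify the gradation condition $[\Lie{g}_i, \Lie{g}_j] \subseteq \Lie{g}_{i+j}$ case by case, reading off from the table in Definition~\ref{DefDF}. For $[\Lie{g}_1, \Lie{g}_1]$, the non-trivial brackets among $\{Y_{2n+1}, X_{2n}, \ldots, X_1\}$ are $[Y_{2n+1}, X_{n+j}] = -\tfrac{1}{2} Y_j$, $[Y_{2n+1}, X_j] = \tfrac{1}{2} Y_{n+j}$ (for $j = 1, \ldots, n$), and $[X_{n+j}, X_j] = -X_{2n+1}$, all of which land in $\Rspan{Y_1, \ldots, Y_{2n}, X_{2n+1}} = \Lie{g}_2$. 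For $[\Lie{g}_1, \Lie{g}_2]$, the non-zero brackets are $[Y_{2n+1}, X_{2n+1}] = -Z$ and $[X_j, Y_j] = Z$ for $j = 1, \ldots, 2n$, all lying in $\R Z = \Lie{g}_3$. Since $Z$ is central, $[\Lie{g}_1, \Lie{g}_3] = 0$, and a quick inspection of the top-left block shows $[\Lie{g}_2, \Lie{g}_2] = 0$ (the $Y_j$ all commute, $X_{2n+1}$ commutes with every $Y_j$ by looking at the relevant entries of the table, and $X_{2n+1}$ commutes with itself). The remaining brackets $[\Lie{g}_2, \Lie{g}_3]$ and $[\Lie{g}_3, \Lie{g}_3]$ vanish for the same centrality reason.

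Finally, I must check that $\Lie{g}_1$ generates $\HA{n}{2}$ as a Lie algebra, which amounts to verifying $[\Lie{g}_1, \Lie{g}_1] = \Lie{g}_2$ and $[\Lie{g}_1, \Lie{g}_2] = \Lie{g}_3$. The first equality follows from the computation above: the brackets $[Y_{2n+1}, X_{n+j}]$, $[Y_{2n+1}, X_j]$, and $[X_{n+j}, X_j]$ produce $Y_j$, $Y_{n+j}$, and $X_{2n+1}$ respectively (up to non-zero scalars), which together span $\Lie{g}_2$. The second equality is immediate since $[Y_{2n+1}, X_{2n+1}] = -Z$ already spans $\Lie{g}_3$. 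There is nothing genuinely difficult here; the only potential obstacle is the bookkeeping involved in checking all bracket relations, which is tedious but mechanical once one observes that the table has been arranged so that $\Lie{g}_1$-entries lie in the lower block, $\Lie{g}_2$-entries in the middle, and $Z$ in $\Lie{g}_3$ precisely in the way required. This computation also dovetails with the $n=1$ lemma proved just above, which serves as a template and sanity check.
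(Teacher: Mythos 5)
Your proof is correct. The paper states Lemma~\ref{LemStrat} without an explicit proof (treating it as a direct inspection of the bracket table in Definition~\ref{DefDF}), and your argument supplies exactly the routine verification the paper leaves implicit: reading off the non-vanishing brackets to confirm $[\Lie{g}_i,\Lie{g}_j]\subseteq\Lie{g}_{i+j}$ and that $\Lie{g}_1$ generates, which matches the expected reasoning.
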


	\begin{prop} \label{NatDilDF}
The canonical family of dilations $\{ D_r \}_{r > 0}$ on the Dynin-Folland group $\HG{n}{2}$ related to the stratification in Lemma~\ref{LemStrat} is given by
	\begin{equation}
	\left\{\begin{array}{rclcl}
		D_r(Z) &=& r^3 Z, && \\
		D_r(Y_j) &=& r^2 Y_j, \hspace{5pt} D_r(X_{2n+1}) &=& r^2 X_{2n+1}, \\
		D_r(X_j) &=& r X_j, \hspace{5pt} D_r(Y_{2n+1}) &=& r Y_{2n+1},
	\end{array}\right. \label{DilationsDF}
	\end{equation}
for $j = 1, \ldots, 2n$.

For the homogeneous structure thus defined the homogeneous dimension equals 
	\begin{align*}
		\hdim = \sum_{j = 1}^3 j \hspace{1pt} \dim(\Lie{g}_j) = (2n + 1) + 2 (2n+1) + 3 = 6n+6.
	\end{align*}
	\end{prop}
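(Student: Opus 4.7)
The proof is a direct application of Definition~\ref{CanDilGrGr} to the stratification established in Lemma~\ref{LemStrat}, together with an elementary dimension count, so I would keep it short.

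First, I would invoke Lemma~\ref{LemStrat}, which provides the decomposition $\HA{n}{2} = \Lie{g}_3 \oplus \Lie{g}_2 \oplus \Lie{g}_1$ with basis vectors distributed as $Z \in \Lie{g}_3$; $Y_1, \ldots, Y_{2n}, X_{2n+1} \in \Lie{g}_2$; and $Y_{2n+1}, X_{2n}, \ldots, X_1 \in \Lie{g}_1$. According to Definition~\ref{CanDilGrGr} the canonical weight matrix $A$ is the one that acts as multiplication by $k$ on $\Lie{g}_k$, and the dilations are defined by $D_r = \exp_{GL}(A \log r)$. Since each of the chosen basis vectors is an eigenvector of $A$ with eigenvalue equal to its stratum index, one immediately reads off the formulas in~\eqref{DilationsDF}: $D_r(Z) = r^3 Z$; $D_r(Y_j) = r^2 Y_j$ for $j = 1, \ldots, 2n$ and $D_r(X_{2n+1}) = r^2 X_{2n+1}$; $D_r(X_j) = r X_j$ for $j = 1, \ldots, 2n$ and $D_r(Y_{2n+1}) = r Y_{2n+1}$.

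For completeness I would remark that these $D_r$ are indeed Lie algebra automorphisms, which amounts to verifying $[\Lie{g}_i, \Lie{g}_j] \subseteq \Lie{g}_{i+j}$ on the bracket table of Definition~\ref{DefDF}: the brackets $[X_i, X_j]$ and $[X_i, Y_{2n+1}]$ with $i, j \in \{1, \ldots, 2n\}$ land in $\Lie{g}_2$ (yielding $\pm X_{2n+1}$ or $\pm \tfrac{1}{2} Y_k$), while the mixed brackets $[Y_i, X_j]$ with $i \in \{1, \ldots, 2n\}$ and the single bracket $[X_{2n+1}, Y_{2n+1}]$ all land in $\Lie{g}_3$, giving $\pm Z$. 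Everything else vanishes, so the weighting $(3, 2, 1)$ is additive on all non-zero brackets.

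Finally, for the homogeneous dimension I would compute
\begin{align*}
\hdim &= \sum_{j=1}^{3} j \, \dim(\Lie{g}_j) = 3 \cdot 1 + 2 \cdot (2n+1) + 1 \cdot (2n+1) = 3 + (4n+2) + (2n+1) = 6n + 6,
\end{align*}
which finishes the proof. There is no real obstacle here; the only non-trivial content is the compatibility of the grading with the bracket table, and that verification is already implicit in Lemma~\ref{LemStrat}.
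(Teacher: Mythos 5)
Your proposal is correct and takes essentially the approach the paper implies; the paper in fact omits a proof here, since the proposition follows immediately from applying Definition~\ref{CanDilGrGr} to the stratification in Lemma~\ref{LemStrat} and counting dimensions. Your explicit check that the grading is additive on the bracket table is welcome but is already subsumed in the statement of Lemma~\ref{LemStrat}, so it is not strictly necessary.
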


In Section~\ref{SpecEst} we will also consider other dilations which are compatible with this stratification of $\HA{n}{2}$.

\subsection{The Generic Representations} \label{GenReps}

In this subsection we discuss the generic unirreps of $\HG{n}{2}$ in terms of a convenient set of coordinates. The generic unirreps were first classified in \cite{FollMeta}. For $\HA{1}{2}$ these are precisely the representations corresponding to the coadjoint orbits $\Orbit_{\rp Z^*}, \rp \in \R \setminus \{ 0 \}$, see Subsection~\ref{Dynin'sLieAlg}. A classification of the remaining unirreps was given in \cite{FiRoRu}. A consequence of the full classification is the square integrability modulo the respective projective kernels of all unirreps of $\HG{n}{2}$. For details we refer to \cite[p.~7]{FollMeta} and \cite[Cor.~4.7]{FiRoRu}.

In this article, however, we will exclusively use the generic unirreps, of which we give an explicit description. For general $n \in \N$ the generic coadjoint orbits are the affine subspaces
	\begin{align*}
		\Orbit_{\rp Z^*} = \rp Z^* \oplus \R^{4n+2} \subgr \HA{n}{2}^*
	\end{align*}
for $\rp \in \R \setminus \{ 0 \}$ and the corresponding representations denoted by $\pi_{\rp}$, can be realized in the representation space $\L{2}{\H}$: The subalgebra $\pid := \Rspan{Z, Y_1, \ldots, Y_{2n+1}}$ is a polarization for each representative $\rp Z^*, \rp \neq 0$. In fact, $\pid$ is Abelian, thus an ideal of $\HA{n}{2}$; equivalently, $\PID := \exp_{\HG{n}{2}}(\pid)$ is an Abelian normal subgroup of $\HG{n}{2}$. The direct complement $\qa := \pid^\perp = \Rspan{X_{2n+1}, \ldots, X_1}$ is therefore a subalgebra of $\HA{n}{2}$ which is isomorphic to $\h$; equivalently, $\H \cong \exp_{\HG{n}{2}}(\qa) =: \QA \subgr \HG{n}{2}$. Given $l = \rp Z^*$ for $\rp \neq 0$, $\pi_{\rp}$ can be realized as the representation $\pi_{\rp} := \indR{\chi_{\rp Z^*}}{\PID}{\HG{n}{2}}$ acting in $\L{2}{\PID \rquo \HG{n}{2}}$ induced by the character
	\begin{align*}
		\chi_{\rp Z^*}: \PID \to \CF: m \mapsto e^{2 \pi i \bracket{\rp Z^*}{\log(m)}}.
	\end{align*}
Since $\PID \rquo \HG{n}{2} \cong \QA$, the natural representation space of $\pi_{\rp}$ can be identified with $\L{2}{\H}$.

The specific choice of coordinates for a concrete realization of $\pi_{\rp}$ in $\L{2}{\H}$ is a matter of taste and purpose. Choosing exponential coordinates and $\rp = 1$, one follows precisely Dynin's original prescription. However, we will deviate from Dynin's choice and employ a different set of coordinates, which simplifies the $\HG{n}{2}$-group law and, a fortiori, the expressions for the generic unirreps. These coordinates are the natural coordinates when $\HG{n}{2}$ is written as the semi-direct product $\HG{n}{2} = \PID \rtimes \QA$ and are referred to as ``split exponential coordinates'' in \cite{FiRoRu}.

Writing an arbitrary, but fixed element $g \in \HG{n}{2}$ in split exponential coordinates amounts to writing
	\begin{align*}
		g = \exp_{\HG{n}{2}} \bigl( zZ + y_1 Y_1 + \ldots + y_{2n+1} Y_{2n+1} \bigr)  \exp_{\HG{n}{2}} \bigl( x_{2n+1} X_{2n+1} + \ldots +x_1 X_1 \bigr)
	\end{align*}		
for uniquely determined $z, y_1, \ldots, x_1 \in \R$. 
We will denote by $y$ and $x$ the coordinate vectors $(y_1, \ldots, y_{n+1})$ and $(x_{n+1}, \ldots, x_1)$, respectively, in order to set the notation
	\begin{align}
		\exp_{\HG{n}{2}} \bigl( zZ + y_1 Y_1 + \ldots + y_{2n+1} Y_{2n+1} \bigr)  \exp_{\HG{n}{2}} \bigl( x_{2n+1}& X_{2n+1} + \ldots +x_1 X_1 \bigr) \nn \\
		 &:= \bigl(z, y_1, \ldots, y_{n+1}, x_{n+1}, \ldots, x_1 \bigr) := (z, y, x). \label{spexcs}
	\end{align}		
For $g = (z, y, x)$ and $g' = (z', y', x')$ the $\HG{n}{2}$-group multiplication is then given by
	\begin{align*}
		(z, y, x) (z', y', x') = (z'', y'', x'') 
	\end{align*}
with
	\begin{equation} \nn 
	\left\{ \begin{array}{rcl}
		z'' &=& z + z' + \sum_{j = 1}^{2n+1} x_j y'_j = z + z' + \bracket{x}{y'},  \\
		y''_1 &=& y_1 + y'_1 + \frac{1}{2}  y'_{2n+1} x_{n+1}, \\
		& \vdots & \\
		y''_n &=& y_n + y'_n + \frac{1}{2} y'_{2n+1} x_{2n}, \\
		y''_{n+1}&=& y_{n+1} + y'_{n+1} - \frac{1}{2} y'_{2n+1} x_1, \\
		& \vdots & \\
		y''_{2n}&=& y_{2n} + y'_{2n} - \frac{1}{2} y'_{2n+1} x_n, \\
		y''_{2n+1} &=& y_{2n+1} + y'_{2n+1}, \\
		x''_{2n+1} &=& x_{2n+1} + x'_{2n+1} + \frac{1}{2} \sum_{j=1}^n (x_j x'_{n+j} - x'_j x_{n+j}), \\
		x''_{2n} &=& x_{2n} + x'_{2n}, \\
		& \vdots & \\
		x''_1 &=& x_1 + x'_1. \\
	\end{array}\right. \nn
	\end{equation}
If we denote by $x \cdot x'$ the $x''$-coordinates of $(0, 0, x'') = (0, 0, x) \cdot (0, 0, x') \in \QA \cong \H$ and by $\coad$ the coadjoint action of $\H$ on $\h^* \cong \R^{2n+1}$, we can abbreviate the group law substantially by
	\begin{align}
		(z, y, x) (z', y', x') = \bigl( z + z' + \bracket{x}{y'}, y + y' + \frac{1}{2} \coad(x)y', x \cdot x' \bigr). \label{GrLawAbbr}
	\end{align}
For a detailed computation we refer to \cite{FollMeta} and \cite[\SS~3.2]{FiRoRu}. A straightforward computation yields the action of $\pi_{\rp}$ on $f \in \L{2}{\H}$ in terms of split exponential coordinates:
	\begin{align}
		\bigl( \pi_{\rp}(z, y, x) f \bigr)(t) = e^{2 \pi i \rp z} \hspace{2pt} e^{2 \pi i \rp \bracket{t}{y}} \hspace{2pt} f(t \cdot x) \label{GenGrRep}
	\end{align}
for $t =(t_{2n+1}, \ldots, t_1) := \exp_{\H} \bigl( t_{2n+1} X_{2n+1} + \ldots + t_1 X_1 \bigr) \in \H$. We note that the restriction of $\pi_{\rp}$ to the subgroup $\QA \cong \H$ equals the right regular representation $R_{\H}$. For more details we refer to \cite{FollMeta} and \cite[\SS~4.2]{FiRoRu}.

This realization will be our default realization of $\pi_{\rp}$.

	\begin{dfn} \label{DefRep}
Let the group $\HG{n}{2}$ defined by Definition~\ref{DefDF} be equipped with the split exponential coordinates given by \eqref{spexcs}. For the unitary irreducible representation $\pi_{\rp}$ of $\HG{n}{2}$ corresponding to the coadjoint orbit with representative $l \in \Lie{\HA{n}{2}} \setminus \{ 0 \}$ we define the ``realization of $\pi_{\rp}$ in split exponential coordinates'' to be the one in the representation space $\HS_{\pi_{\rp}} = \L{2}{\H}$ given by \eqref{GenGrRep}.

For the representative $Z^*$, i.e., $\rp = 1$, we abbreviate $\pi_1$ by $\pi$.
	\end{dfn}

\section{The Harmonic Oscillator on $\H$ as a Sum of Squares} \label{SectionHHO}

In this section we propose a definition of ``harmonic oscillator on $\H$'' which seeks to generalize the geometric aspects of the identity
	\begin{align*}
		\HO = - d\rho \bigl( \SL_{\H}) = - \Delta + 4 \pi^2 \hspace{1pt} \Abs{t}^2
	\end{align*}
for the harmonic oscillator on $\R^n$. The definition will be a direct consequence of the representation-theoretic results discussed in the previous section and in this section.

The representation $d\pi_{\rp}$ of the Lie algebra $\HA{n}{2}$ in a vector space of differential operators on $\H$ was Dynin's starting point for defining $\HA{n}{2}$ and, a fortiori, the Lie group $\HG{n}{2}$. Conversely, one obtains the Lie algebra representation from the Lie group representation by differentiation. Having fixed the realization of $\pi_{\rp}$ in the representation space $\HS_{\pi_{\rp}} = \L{2}{\H}$, the image $d\pi_{\rp}(\HA{n}{2})$ is the isomorphic $4n+3$-dimensional Lie algebra of linear operators whose natural domains include the smooth vectors $\HS_{\pi_{\rp}}^\infty = \SFG{\H}$; this Lie algebra is naturally equipped with the commutator bracket and it is determined by the strong Malcev basis $\{ d\pi_{\rp}(Z), d\pi_{\rp}(Y_1), \ldots, d\pi_{\rp}(X_1) \}$ acting on $f \in \SFG{\H}$ by
	\begin{equation} \nn 
	\left\{ \begin{array}{rcl}
		d\pi_{\rp}(Z) f  &=& 2 \pi i \rp \hspace{2pt} f, \\
		d\pi_{\rp}(Y_j) f &=& 2 \pi i  \rp \hspace{2pt}  t_j \hspace{1pt} f, \hspace{80pt} j = 1, \ldots, 2n+1, \\
		d\pi_{\rp}(X_{2n+1}) f &=& \partial_{t_{2n+1}} f, \\
		d\pi_{\rp}(X_{2n}) f &=& \bigl( \partial_{t_{2n}} + \frac{1}{2} t_n \partial_{t_{2n+1}}\bigr) f, \\
		& \vdots & \\
		d\pi_{\rp}(X_{n+1}) f &=& \bigl( \partial_{t_{n+1}} + \frac{1}{2} t_1 \partial_{t_{2n+1}} \bigr) f, \\
		d\pi_{\rp}(X_n) f  &=& \bigl( \partial_{t_n} - \frac{1}{2} t_{2n} \partial_{t_{2n+1}} \bigr) f, \\
		& \vdots & \\
		d\pi_{\rp}(X_1) f  &=& \bigl( \partial_{t_1} - \frac{1}{2} t_{n+1} \partial_{t_{2n+1}} \bigr) f.
	\end{array}\right. \nn
	\end{equation}
Not surprisingly, the differential operators $d\pi_{\rp}(X_{2n+1}), \ldots, d\pi_{\rp}(X_1)$ are precisely the left-invariant vector fields $dR_{\H}(X_{2n+1}), \ldots, dR_{\H}(X_1)$ on $\H$ corresponding to the strong Malcev basis of $\h$. Consequently, we have the following.

	\begin{lem}
For each $\rp \in \R \setminus \{ 0 \}$ let us denote by $d\pi_{\rp}$ also the extension of the representation $d\pi_{\rp}$ of $\HA{n}{2}$ to the universal enveloping algebra $\UEA{\h}$. For $\pi = \pi_1$ we have
	\begin{align*}
		\SL_{\H} = d\pi(X_1)^2 + \ldots + d\pi(X_{2n})^2 = d\pi \bigl( X_1^2 + \ldots + X_{2n}^2 \bigr),
	\end{align*}
that is, the left sub-Laplacian $\SL_{\H}$ is the image under $d\pi$ of the sum of squares of the basis vectors $X_1, \ldots, X_{2n}$ of $\HA{n}{2}$ which span the subalgebra $\qa \cong \h$.
	\end{lem}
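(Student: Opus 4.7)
The plan is to chain two identifications that have already been laid out in the excerpt. First, by the definition of the sub-Laplacian in Section~\ref{ClassicalTheory} and the fact that $dR_{\H}$ extends to a representation of $\UEA{\h}$, one has
\begin{align*}
\SL_{\H} = dR_{\H}(X_1^2 + \ldots + X_{2n}^2) = dR_{\H}(X_1)^2 + \ldots + dR_{\H}(X_{2n})^2,
\end{align*}
so it suffices to show $d\pi(X_j) = dR_{\H}(X_j)$ as operators on the common invariant domain of smooth vectors, for $j = 1, \ldots, 2n$. Then extending $d\pi$ to $\UEA{\HA{n}{2}} \supset \UEA{\qa}$ as an algebra representation and applying it to the element $X_1^2 + \ldots + X_{2n}^2 \in \UEA{\qa}$ will yield the claim.

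The required identification $d\pi(X_j) = dR_{\H}(X_j)$ is exactly the observation made in the paragraph immediately preceding the lemma: comparing the explicit formulas for $d\pi(X_{2n+1}), \ldots, d\pi(X_1)$ on $f \in \SFG{\H}$ in split exponential coordinates with the explicit expression for the left-invariant vector fields on $\H$ appearing in the formula for $\SL_{\H}$ in Section~\ref{ClassicalTheory} yields the equality as differential operators on $\SFG{\H}$. Conceptually, this is not a coincidence: as noted in Subsection~\ref{GenReps}, the restriction $\pi|_{\QA}$ coincides with the right regular representation $R_{\H}$ under the identification $\QA \cong \H$, so differentiating gives $d\pi|_{\qa} = dR_{\H}|_{\h}$, and the universal property of the enveloping algebra then extends this to an agreement on all of $\UEA{\qa} \cong \UEA{\h}$.

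Putting the two pieces together, since $X_1^2 + \ldots + X_{2n}^2$ lies in the image of $\UEA{\qa} \hookrightarrow \UEA{\HA{n}{2}}$, the extended algebra homomorphism $d\pi$ sends it to $d\pi(X_1)^2 + \ldots + d\pi(X_{2n})^2 = dR_{\H}(X_1)^2 + \ldots + dR_{\H}(X_{2n})^2 = \SL_{\H}$, which is the desired identity. The only genuine step in the argument is the coincidence of $d\pi$ and $dR_{\H}$ on the generators $X_j$, and since $\pi|_{\QA} = R_{\H}$ is built into the construction of the generic representation in Definition~\ref{DefRep}, no real obstacle appears; the bulk of the work has effectively been done in computing the explicit infinitesimal action displayed above the lemma.
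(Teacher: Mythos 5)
Your proof is correct and follows exactly the route the paper takes: the paper displays the explicit infinitesimal action of $d\pi_{\rp}$ on the basis, observes in the sentence immediately preceding the lemma that the operators $d\pi_{\rp}(X_{2n+1}), \ldots, d\pi_{\rp}(X_1)$ coincide with the left-invariant vector fields $dR_{\H}(X_{2n+1}), \ldots, dR_{\H}(X_1)$, and concludes. Your additional remark that the identification $d\pi|_{\qa} = dR_{\H}|_{\h}$ can also be seen directly from $\pi_{\rp}|_{\QA} = R_{\H}$ (noted in Subsection~\ref{GenReps}) is a sound and slightly more conceptual packaging of the same observation.
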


In order to define our version of harmonic oscillator on $\H$ we go one step ahead and consider the sum of squares of the basis vectors spanning the first stratum $\Lie{g}_1 \subgr \HA{n}{2}$.

	\begin{dfn}[Harmonic Oscillator on $\H$] \label{DefHHO}
Let $\{ Z, Y_1, \ldots, Y_{2n+1}, X_{2n+1}, \ldots, X_1 \}$ be the strong Malcev basis of the Dynin-Folland Lie algebra $\HA{n}{2}$ determined by Definition~\ref{DefDF}. For $\rp \in \R \setminus \{ 0 \}$ let $\pi_{\rp}$ be the representation of the Dynin-Folland group $\HG{n}{2}$ in $\L{2}{\H}$ defined by Definition~\ref{DefRep} and denote by $d\pi_{\rp}$ also the extension to the universal enveloping algebra of the Lie algebra representation $d\pi_{\rp}$ of $\HA{n}{2}$. We then define the harmonic oscillator on the Heisenberg group $\H$ of parameter $\rp$ to be the positive essentially self-adjoint operator
	\begin{align*}
		\HHO^\rp :=& - d\pi_{\rp} \bigl( X_1^2 + \ldots X_{2n}^2 + Y_{2n+1}^2 \bigr) \\
		=& - d\pi_{\rp}(X_1)^2 - \ldots  - d\pi_{\rp}(X_{2n})^2 - d\pi_{\rp}(Y_{2n+1})^2  \\ 
		=& - \SL_{\H} + 4 \pi^2 \rp^2 \hspace{1pt} t_{2n+1}^2 \\
		=& - \bigl( \partial_{t_1} - \frac{1}{2} t_{n+1} \partial_{t_{2n+1}} \bigr)^2 - \ldots - \bigl( \partial_{t_{2n}} + \frac{1}{2} t_n \partial_{t_{2n+1}}\bigr)^2 + 4 \pi^2 \rp^2 \hspace{1pt} t_{2n+1}^2,
	\end{align*}
whose natural domain includes the smooth vectors $\HS^\infty_{\pi_{\rp}} \cong \SFG{\H}$.

That is, $\HHO^\rp$ is defined to be the image under $d\pi_{\rp}$ of the negative sum of squares of the basis vectors $Y_{2n+1}, X_{2n}, \ldots, X_1$ which span the first stratum $\Lie{g}_1$ of the stratification $\HA{n}{2} = \Lie{g}_3 \oplus \Lie{g}_2 \oplus \Lie{g}_1$ defined by Defintion~\ref{LemStrat}. For $\rp = 1$ we set $\HHO^1 =: \HHO$.
	\end{dfn}

	\begin{ex}\label{EX:haH1}
In the case $n = 1$ the harmonic oscillator $\mathcal{Q}_{\mathbf{H}_1}^\rp$ has the explicit form
	\begin{align*}
		\mathcal{Q}_{\mathbf{H}_1}^\rp = - \bigl( \partial_{t_1}^2 + \partial_{t_2}^2 \bigr) - \frac{1}{4} \bigl ({t_1}^2 + {t_2}^2 \bigr ) \hspace{1pt} \partial_{t_3}^2 + \bigl ( t_1 \hspace{1pt} \partial_{t_2} - t_2 \hspace{1pt} \partial_{t_1} \bigr ) \hspace{1pt} \partial_{t_3} + 4 \pi^2 \rp^2 \hspace{1pt} t_3^2.
	\end{align*}
	\end{ex}

\section{Spectral Estimates for Harmonic and Anharmonic Oscillators on $\H$} \label{SpecEst}

In this section we apply the results of ter Elst and Robinson~\cite{tERo} to give asymptotic estimates for the distribution of eigenvalues of the harmonic and the related generic anharmonic oscillators on $\H$. Our arguments will roughly follow the order in which we recalled crucial definitions and theorems in Section~\ref{MachinerytERob}.

\subsection{The Harmonic Oscillator-Case}
In this subsection we treat the case of the harmonic oscillator $\HHO^\rp$. To apply the methods from \cite{tERo}, we employ a homogeneous quasi-norm on $\HA{n}{2}^*$ which is both compatible with the dilations from Proposition~\ref{NatDilDF} and convenient to give spectral estimates for the harmonic oscillator $\HHO^\rp$.

	\begin{prop} \label{QNDF}
Let $\{ D_r \}_{r > 0}$ be the family of dilations on the Dynin-Folland Lie algebra $\HA{n}{2}$ defined in Proposition~\ref{NatDilDF} and let $\{ D^*_r \}_{r > 0}$ be the corresponding dilations on $\HA{n}{2}^*$ defined by duality. Moreover, let us express all elements $l \in \HA{n}{2}^*$ in terms of the dual basis of the strong Malcev basis $\{ Z, Y_1, \ldots, X_1 \}$, i.e., $l = l_z Z^* + l_{y_1} Y^*_1 + \ldots + l_{x_1} X^*_1$. Then the map $\qn{\HA{n}{2}^*}{\, . \,}^\infty: \HA{n}{2}^* \to [0, \infty)$ defined by
	\begin{align*}
		\qn{\HA{n}{2}^*}{l}^\infty := \max \bigl \{ \Abs{l_z}^{1/3}, \Abs{l_{y_1}}^{1/2}, \ldots, \Abs{l_{y_{2n}}}^{1/2}, \Abs{l_{x_{2n+1}}}^{1/2}, \Abs{l_{y_{2n+1}}}, \Abs{l_{x_{2n}}}, \ldots, \Abs{l_{x_1}}  \bigr \}
	\end{align*}
is a homogeneous quasi-norm.
	\end{prop}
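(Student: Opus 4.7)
The plan is to recognize this proposition as a direct instance of Corollary~\ref{CQNGrGr} applied to $\HA{n}{2}$ with its canonical homogeneous structure. Indeed, Lemma~\ref{LemStrat} supplies the stratification $\HA{n}{2} = \Lie{g}_3 \oplus \Lie{g}_2 \oplus \Lie{g}_1$ with $\dimG_3 = 1$, $\dimG_2 = 2n+1$, $\dimG_1 = 2n+1$, and Proposition~\ref{NatDilDF} gives precisely the canonical dilations of Definition~\ref{CanDilGrGr} associated with this stratification. So once the ordered basis $\{Z, Y_1, \ldots, Y_{2n}, X_{2n+1}, Y_{2n+1}, X_{2n}, \ldots, X_1\}$ is identified as a strong Malcev basis passing through the gradation, the result follows verbatim from Corollary~\ref{CQNGrGr}: the stratum indices $3, 2, 1$ translate into the exponents $1/3, 1/2, 1$ on the corresponding dual-coordinate moduli, which is exactly the formula in the statement.

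The strong Malcev property of this ordered basis is immediate from the stratification: since $[\Lie{g}_i, \Lie{g}_j] \subseteq \Lie{g}_{i+j}$ with $\Lie{g}_k = 0$ for $k \geq 4$, both $\Lie{g}_3$ and $\Lie{g}_3 \oplus \Lie{g}_2$ are ideals of $\HA{n}{2}$, so the descent first through $\Lie{g}_3$, then through $\Lie{g}_3 \oplus \Lie{g}_2$, then through all of $\HA{n}{2}$ yields a chain of ideals at each step; the ordering of the basis vectors within each stratum is unconstrained by Corollary~\ref{CQNGrGr}.

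If one preferred to bypass Corollary~\ref{CQNGrGr} and verify Definition~\ref{DefHQNdual} directly, properties (ii) and (iii) would be immediate from the dual dilations $D^*_r(Z^*) = r^3 Z^*$, $D^*_r(Y_j^*) = r^2 Y_j^*$ for $j \leq 2n$, $D^*_r(X_{2n+1}^*) = r^2 X_{2n+1}^*$, $D^*_r(Y_{2n+1}^*) = r Y_{2n+1}^*$, $D^*_r(X_j^*) = r X_j^*$ for $j \leq 2n$, which make every term under the max scale as $r$ times the original. The only step demanding an estimate would be the quasi-triangle inequality~(i), and it is handled by the elementary bound $|a+b|^{1/k} \leq 2^{(k-1)/k}(|a|^{1/k} + |b|^{1/k})$ for $k = 2, 3$ (with constant $1$ for $k = 1$), applied componentwise, followed by subadditivity of the maximum with overall constant $C := \max\{1, 2^{1/2}, 2^{2/3}\}$. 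There is no genuine obstacle here — the statement is essentially bookkeeping on top of Corollary~\ref{CQNGrGr}, and the only thing to watch is that the constant $C$ is chosen uniformly over all components and both vectors.
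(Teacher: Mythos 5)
Your proposal is correct and takes exactly the same route as the paper: the paper's entire proof is the single sentence ``This is a special case of Corollary~\ref{CQNGrGr}.'' You flesh this out a bit (identifying the stratification from Lemma~\ref{LemStrat}, checking that the ordered basis is strong Malcev, and sketching a direct verification as a fallback), but the core observation — that Proposition~\ref{NatDilDF} gives the canonical dilations of Definition~\ref{CanDilGrGr} so Corollary~\ref{CQNGrGr} applies verbatim — is precisely the paper's argument.
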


	\begin{proof}
This is a special case of Corollary~\ref{CQNGrGr}.
	\end{proof}

The following statement about the harmonic oscillator $\HHO$ follows automatically from a general statement about sub-Laplacians on stratified Lie groups: The negative (left or right) sub-Laplacian $- \SL_G$ on a stratified Lie group $G$ is a (left-invariant or right-invariant) positive Rockland operator. For details we refer to \cite{FiRuMon}, Lemma~4.1.7 and Remark~4.2.4. Since every left or right sub-Laplacian equals $dR(X_1^2+ \ldots + X_d^2)$ or $dL(X_1^2+ \ldots + X_d^2)$, respectively, for a basis $X_1, \ldots, X_d$ of the first stratum of $\Lie{g}$, we have the following.

	\begin{lem}
The element $P = X_1^2 + \ldots X_{2n}^2 + Y_{2n+1}^2 \in \UEA{\HA{n}{2}}$, employed in Definition~\ref{DefHHO}, is a positive Rockland form of homogeneous degree $\hdeg = 2$.
	\end{lem}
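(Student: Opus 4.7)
The plan is to verify the three properties defining a positive Rockland form of degree $2$: homogeneity of degree $\hdeg = 2$ under the canonical dilations on $\HA{n}{2}$, non-negativity of the image $-d\pi(P)$ under every unirrep $\pi$ of $\HG{n}{2}$, and the injectivity requirement \eqref{RockPr} on smooth vectors. All three follow rapidly once we notice that $P$ is the sum of squares of a basis of the first stratum $\Lie{g}_1$ of $\HA{n}{2}$, so the statement is essentially an instance of the general fact about sub-Laplacians on stratified groups recorded just before the lemma.

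The homogeneity is immediate. By Lemma~\ref{LemStrat}, $\{ Y_{2n+1}, X_{2n}, \ldots, X_1 \}$ spans the first stratum $\Lie{g}_1$, and by Proposition~\ref{NatDilDF} each of these basis vectors scales with weight one under the canonical dilations $\{D_r\}_{r > 0}$. Extending the dilations to $\UEA{\HA{n}{2}}$ as algebra automorphisms, every quadratic monomial $X_j^2$ or $Y_{2n+1}^2$ is homogeneous of degree $2$, and hence so is their sum $P$.

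For positivity and the Rockland property, the key observation is that for any unitary representation $\pi$ of $\HG{n}{2}$ each operator $d\pi(V)$, $V \in \HA{n}{2}$, is formally skew-adjoint on $\HS^\infty_\pi$, so
	\begin{align*}
		- \bracket{d\pi(P) v}{v} = \sum_{j = 1}^{2n} \| d\pi(X_j) v \|^2 + \| d\pi(Y_{2n+1}) v \|^2 \geq 0.
	\end{align*}
This yields the non-negativity of $-d\pi(P)$ on the dense subspace of smooth vectors. Suppose now that $\pi$ is nontrivial and $d\pi(P) v = 0$ for some $v \in \HS^\infty_\pi$. The identity above forces $d\pi(V) v = 0$ for every $V \in \Lie{g}_1$. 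Since the stratification gives $\Lie{g}_{k+1} = [\Lie{g}_1, \Lie{g}_k]$ for $k = 1, 2$, iterating the commutator identity $d\pi([V,W]) = [d\pi(V), d\pi(W)]$ extends the annihilation to $d\pi(V) v = 0$ for every $V \in \HA{n}{2}$.

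The only step that requires a little care is lifting this infinitesimal annihilation to $\pi(\exp V) v = v$ for all $V \in \HA{n}{2}$; this is the standard one-parameter group argument available because $v$ is smooth, and will be the main technical ingredient. Once it is in place, the closed subspace of $\pi$-invariant vectors is nonzero and $\pi$-invariant, hence by irreducibility equals the full representation space, contradicting the nontriviality of $\pi$ unless $v = 0$. This establishes \eqref{RockPr} and finishes the verification.
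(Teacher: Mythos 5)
Your proof is correct, but it takes a genuinely different route from the paper. The paper does not verify the three properties directly; instead it appeals to the general fact (stated in the paragraph just before the lemma, with a reference to \cite{FiRuMon}, Lemma~4.1.7 and Remark~4.2.4) that the negative sub-Laplacian on a stratified group is a positive Rockland operator, together with the observation that $\{Y_{2n+1}, X_{2n}, \ldots, X_1\}$ is a basis of the first stratum $\Lie{g}_1$ of $\HA{n}{2}$, so $P$ is exactly the quadratic sub-Laplacian form. What you have done is essentially reprove that cited lemma from first principles: degree-$2$ homogeneity from the weight-$1$ scaling on $\Lie{g}_1$; positivity from formal skew-adjointness of $d\pi(V)$; and the Rockland injectivity by propagating $d\pi(V)v=0$ from the first stratum up the stratification via $\Lie{g}_{k+1} = [\Lie{g}_1, \Lie{g}_k]$ and the commutator identity, then lifting to the group by the one-parameter group argument and invoking irreducibility. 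Your route is longer but self-contained, and it exhibits the mechanism behind the cited result rather than treating it as a black box; the paper's route is a one-line reduction to standard stratified-group theory. Both arguments are sound.

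One small point worth flagging, though it originates in the paper rather than in your write-up: your identity $-\bracket{d\pi(P) v}{v} = \sum_j \|d\pi(X_j)v\|^2 + \|d\pi(Y_{2n+1})v\|^2 \ge 0$ correctly shows that $-d\pi(P)$, not $d\pi(P)$, is nonnegative, so in the sign convention where ``positive Rockland form'' means the form whose image under every $d\pi$ is positive, the positive Rockland form is $-P$, not $P$. This matches Definition~\ref{DefHHO}, which sets $\HHO^\rp = -d\pi_\rp(P)$, and Example~\ref{ExHORn}, which uses $\RF = -\sum X_j^2$; it conflicts with Proposition~\ref{SpecEstHHO}, which writes $\HHO^\rp = d\pi_\rp(P)$. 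You were right to keep the minus sign in your positivity computation.
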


	\begin{prop} \label{SpecEstHHO}
 Let $\pi_{\rp}$ be the unitary irreducible representation of the Dynin-Folland group $\HG{n}{2}$ corresponding to the representative $\rp Z^*, \rp \in \R \setminus \{ 0 \}$. Let $P = X_1^2 + \ldots + X_{2n}^2 + Y_{2n+1}^2 \in \UEA{\HA{n}{2}}$ and denote by $\hdeg$ the homogeneous degree of $P$, which equals $2$. Denote by $\hdim_{\Lie{z}}$ the homogeneous dimension of the center $\Liez{\HA{n}{2}}$, which equals $3$. Then the number of eigenvalues of
the harmonic oscillator $\HHO^\rp = d\pi_{\rp}(P)$ on $\H$, counted with multiplicities, which are less or equal $\l > 0$ is asymptotically given by
	\begin{align*}
		N(\l, \pi_{\rp}, \RF) \asymp \Abs{\rp}^{-(2n+1)} \hspace{1pt} \l^{\frac{\hdim - \hdim_{\Lie{z}}}{\hdeg}} = \Abs{\rp}^{-(2n+1)} \hspace{1pt} \l^{\frac{6n + 3}{2}}
	\end{align*}
and
	\begin{align*}
		\l_{\rp, s} \asymp \Abs{\rp}^{\frac{2}{3}} \hspace{1pt} s^{ \frac{2}{6n+3}} \hspace{5pt} \mbox{ for } \hspace{5pt} s = 1, 2, \ldots,
	\end{align*}
with equivalence constants which are uniform in $\rp \in \R \setminus \{ 0 \}$, and $\l$ and $\l_{\rp, s}$, respectively.
	\end{prop}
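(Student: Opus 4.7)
The strategy is a direct application of Theorem~\ref{ThmtERo}, exactly along the lines of Example~\ref{ExHORn}, with the homogeneous quasi-norm on $\HA{n}{2}^*$ provided by Proposition~\ref{QNDF} and the fact (already recorded in the preceding lemma) that $P = X_1^2 + \ldots + X_{2n}^2 + Y_{2n+1}^2$ is a positive Rockland form of homogeneous degree $\hdeg = 2$. To feed Theorem~\ref{ThmtERo} I need two ingredients: the orbital measure $\om_{\pi_{\rp}}$ on the generic flat coadjoint orbit $\Orbit_{\rp Z^*}$, and the Lebesgue volume of the quasi-norm ball intersected with the orbit.

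For the orbital measure I would follow the Heisenberg computation of Example~\ref{ExHORn} mutatis mutandis. Since the center $\Liez{\HA{n}{2}} = \R Z$ is one-dimensional and the generic orbits $\Orbit_{\rp Z^*} = \rp Z^* + \Rspan{Y_1^*, \ldots, X_1^*}$ are flat of dimension $4n+2$, the Plancherel decomposition
\[
\Leb_{\HA{n}{2}^*} = \int^\oplus_{\Liez{\HA{n}{2}}^*} \om_{\pi_{\rp}} \, d\Pla(\pi_{\rp})
\]
(cf.\ \cite[Thm.~4.3.10]{CoGr}) together with the Pfaffian formula for the Plancherel density pins down $\om_{\pi_{\rp}}$ completely. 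The symplectic matrix $[B_{\rp Z^*}]$ generalises \eqref{eq:c26} and one checks that $\Pf(\rp Z^*) = \Abs{\rp}^{2n+1}$ (the case $n=1$ being the Subsection~\ref{Dynin'sLieAlg} computation), whence $d\Pla(\pi_{\rp}) = \Abs{\rp}^{2n+1} \, d\rp$ and consequently $\om_{\pi_{\rp}} = \Abs{\rp}^{-(2n+1)} \, d\Leb_{\R^{4n+2}}$ on $\Orbit_{\rp Z^*}$. This Pfaffian bookkeeping is the step I expect to be the main technical obstacle: it is routine in spirit but requires a careful indexing of the Lie bracket table in Definition~\ref{DefDF} to identify the matching pairs $(Y_j, X_{j+n+1})$ and $(X_{2n+1}, Y_{2n+1})$ that produce the nontrivial $2 \times 2$ blocks of $[B_{\rp Z^*}]$.

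Once $\om_{\pi_{\rp}}$ is fixed, evaluating $N_0(\l, \pi_{\rp})$ is a direct computation. On $\Orbit_{\rp Z^*}$ the coordinate $l_z$ is frozen at $\rp$, so the constraint $\qn{\HA{n}{2}^*}{l}^\infty \leq \l$ forces $\l \geq \Abs{\rp}^{1/3}$; when this holds, the remaining $4n+2$ dual coordinates run over a box of Lebesgue volume
\[
(2\l^2)^{2n} \cdot (2\l^2) \cdot (2\l) \cdot (2\l)^{2n} = 2^{4n+2} \, \l^{4n + 2 + 1 + 2n} = 2^{4n+2} \, \l^{6n+3},
\]
the four factors arising respectively from the weight-2 coordinates $l_{y_1}, \ldots, l_{y_{2n}}$, from $l_{x_{2n+1}}$ (also weight $2$), from $l_{y_{2n+1}}$ (weight $1$) and from $l_{x_{2n}}, \ldots, l_{x_1}$ (weight $1$). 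Multiplying by the orbital density $\Abs{\rp}^{-(2n+1)}$ yields $N_0(\l, \pi_{\rp}) \asymp \Abs{\rp}^{-(2n+1)} \l^{6n+3}$ for $\l \gtrsim \Abs{\rp}^{1/3}$, and zero otherwise.

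Inserting this into \eqref{SpecAsymp} with $\hdeg = 2$ and replacing $\l$ by $\l^{1/2}$ gives
\[
N(\l, \pi_{\rp}, P) \asymp \Abs{\rp}^{-(2n+1)} \, \l^{(6n+3)/2},
\]
which is the first stated estimate (and indeed equals $\Abs{\rp}^{-(2n+1)} \l^{(\hdim - \hdim_{\Lie{z}})/\hdeg}$). Finally, inverting the counting function in the usual way -- for $\l_{\rp, s}$ the $s$-th eigenvalue one has $s \asymp N(\l_{\rp, s}, \pi_{\rp}, P) \asymp \Abs{\rp}^{-(2n+1)} \l_{\rp, s}^{(6n+3)/2}$, as in the closing paragraph of Example~\ref{ExHORn} -- yields
\[
\l_{\rp, s} \asymp \Abs{\rp}^{\frac{2(2n+1)}{6n+3}} \, s^{\frac{2}{6n+3}} = \Abs{\rp}^{\frac{2}{3}} \, s^{\frac{2}{6n+3}},
\]
with constants uniform in $\rp$ thanks to the uniformity in Theorem~\ref{ThmtERo}.
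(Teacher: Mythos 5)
Your proof follows the paper's argument essentially step for step: compute $\om_{\pi_{\rp}} = \Abs{\rp}^{-(2n+1)}\Leb_{\R^{4n+2}}$ via the Plancherel decomposition and the Pfaffian $\Abs{\Pf(\rp Z^*)} = \Abs{\rp}^{2n+1}$, evaluate $N_0(\l,\pi_{\rp})$ using the quasi-norm of Proposition~\ref{QNDF}, feed this into Theorem~\ref{ThmtERo} with $\hdeg=2$, and invert the counting function. One small slip in the aside about the symplectic block structure: the pairs producing the $2\times 2$ blocks of $[B_{\rp Z^*}]$ are $(Y_j, X_j)$ for $j=1,\ldots,2n$ together with $(X_{2n+1}, Y_{2n+1})$, as is visible from the Lie bracket table in Definition~\ref{DefDF} (cf.\ also the $n=1$ case in \eqref{eq:c26}), not $(Y_j, X_{j+n+1})$; your Pfaffian value $\Abs{\rp}^{2n+1}$ and the resulting orbital measure are nevertheless correct.
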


	\begin{proof}
To begin with, we observe that on each coadjoint orbit $\Orbit_{\rp Z^*}, \rp \in \R \setminus \{ 0 \},$ the orbital measure is given by $\om_{\pi_{\rp}} = \Abs{\rp}^{-(2n+1)} \hspace{1pt} \Leb_{\R^{4n+2}}$, that is, a weighted version of the $(4n+2)$-dimensional Lebesgue measure on the affine subspace $\Orbit_{\rp Z^*} \subseteq \HA{n}{2}^*$. This follows directly from the direct integral representation $\Leb_{\HA{n}{2}^*} = \int^\oplus_{\Liez{\HA{n}{2}}^*} \om_{\pi_{\rp}} d\Pla(\pi_{\rp})$ of the Lebesgue measure on $\HA{n}{2}^*$ by the Plancherel theorem (cf.~\cite[Thm.~4.3.10]{CoGr}) and the fact that the Plancherel measure on $\Liez{\HA{n}{2}}^* \cong \HA{n}{2}^*/\coAd(\HG{n}{2}) \cong \widehat{\mathbf{H}}_{n, 2}$ is given by $d\Pla(\pi_{\rp}) = \Abs{\rp}^{2n+1} \hspace{1pt} d\rp$ (cf.~\cite[p.~9]{FollMeta} or \cite[Thm.~3.18]{Ro14}). 

To compute the asymptotic number of eigenvalues, we employ the homogeneous quasi-norm $\qn{\HA{n}{2}^*}{\, . \,}^\infty$ from Proposition~\ref{QNDF}. Since for $\l < \Abs{\rp}^{1/3}$ the $\qn{\HA{n}{2}^*}{ \, . \,}^\infty$-ball of radius $\l$ around $0$ does not intersect the orbit $\Orbit_{\rp Z^*}$, we may assume $\l \geq \Abs{\rp}^{1/3}$ and compute
	\begin{align*}
		N_0(\l, \pi_{\rp}) &= \om_{\pi_{\rp}} \bigl( \bigl \{ l \in \Orbit_{\rp Z^*} : \qn{\HA{n}{2}^*}{l}^\infty \leq \l \bigr \} \bigr) \\
		 &= \Abs{\rp}^{-(2n+1)} \hspace{1pt} \Leb_{\R^{4n+2}} \bigl( \bigl \{ l \in \Orbit_{\rp Z^*} : \Abs{l_{y_1}} \leq \l^2, \ldots, \Abs{l_{x_1}} \leq \l \bigr \} \bigr ) \\
		 &= \Abs{\rp}^{-(2n+1)} \hspace{1pt} \l^{2(2n+1) + 2n+1} \\
		 &= \Abs{\rp}^{-(2n+1)} \hspace{1pt} \l^{6n+3}.
	\end{align*}
Hence, by \cite[Thm.~4.1]{tERo} we have $N_0(\l^{1/2}, \pi_{\rp}) \asymp N(\l, \pi_{\rp}, \RF) \asymp \Abs{\rp}^{-(2n+1)} \hspace{1pt} \l^{ \frac{6n+3}{2}}$.

The second estimate follows from the observation that for small enough $\e > 0$ and large enough $s_0 \in \N$ there exists a constant $c = c(\e, s_0) > 0$ independent of $\rp$ such that for all $s \geq s_0$
	\begin{align*}
		c^{-1} \Abs{\rp}^{-(2n+1)}(\l_{\rp, s} - \e)^{\frac{6n+3}{2}} \leq N(\l_s - \e, \pi_{\rp}, \RF) \leq s \leq N(\l_s, \pi_{\rp}, \RF) \leq c \Abs{\rp}^{-(2n+1)} \l_{\rp, s}^{\frac{6n+3}{2}},
	\end{align*}
or equivalently, $\l_{\rp, s} \asymp \Abs{\rp}^{\frac{2}{3}} \hspace{1pt} s^{ \frac{2}{6n+3}}$.
	\end{proof}

\subsection{The Anharmonic Oscillator-Case} \label{Subsection_HAO}

In this subsection we study a class of operators which we call the anharmonic oscillators on $\H$. These operators share one main feature: Every gradable Lie algebra equipped with a specific gradation admits a non-empty set of positive Rockland forms. Now we assign to each positive Rockland form $\RF \in \UEA{\HA{n}{2}}$ a family of anharmonic oscillators $\{ \HAO^\rp \}_{\rp \in \R \setminus \{ 0 \}}$; the family of harmonic oscillators $\{ \HHO^\rp \}_{\rp \in \R \setminus \{ 0 \}}$ is included as the special case in which $\HA{n}{2}$ is equipped with the canonical homogeneous structure related to the natural stratification and $\RF = - (X_1^2 + \ldots X_{2n}^2 + Y_{2n+1}^2)$.

	\begin{dfn}[Anharmonic Oscillators on $\H$] \label{DefHAO}
Let $\pi_{\rp}$ be the unitary irreducible representation of the Dynin-Folland group $\HG{n}{2}$ corresponding to the representative $\rp Z^*$ with $\rp \in \R \setminus \{ 0 \}$. Let $P \in \UEA{\HA{n}{2}}$ be a positive homogeneous form which is Rockland for a given homogeneous structure on $\HA{n}{2}$. We define the anharmonic oscillator on $\H$ associated to $\RF$ and $\rp$ as the operator
	\begin{align*}
		\HAO^\rp := d\pi_{\rp}(P).
	\end{align*}
	\end{dfn}

We start our analysis with the following proposition, which characterizes a large class of homogeneous structures on $\HA{n}{2}$.

	\begin{prop} \label{AnhDilDF}
Let $\vt_1, \ldots, \vt_{2n+1} \in \N$. Then
	\begin{equation} \label{GenDilDF}
	\left\{\begin{array}{rclrcl}
		D_r(Z) &:=& r^{\vt_j + \vt_{n+j} + \vt_{2n+1}} Z, &&& \\
		D_r(Y_{n + j}) &:=& r^{\vt_j + \vt_{2n+1}} Y_{n + j}, &D_r(Y_j) &:=& r^{\vt_{n+j} + \vt_{2n+1}} Y_j, \\
		 D_r(X_{2n+1}) &:=& r^{\vt_j + \vt_{n+j}} X_{2n+1}, &D_r(Y_{2n+1}) &:=& r^{\vt_{2n+1}} Y_{2n+1}, \\
		D_r(X_j) &:=& r^{\vt_j} X_j, &D_r(X_{n + j}) &:=& r^{\vt_{n + j}} X_{n + j}
	\end{array}\right. \nn 
	\end{equation}
for $j = 1, \ldots, n,$ defines a family of dilations $\{ D_r \}_{r > 0}$ on the Dynin-Folland Lie algebra $\HA{n}{2}$ if
	\begin{align*}
		\vt_1 + \vt_{n+1} = \ldots = \vt_n + \vt_{2n}.
	\end{align*}

For the homogeneous structure thus defined the homogeneous dimension equals 
	\begin{align*}
		\hdim = (2n+2) (\vt_1 + \vt_{n+1} + \vt_{2n+1}). 
	\end{align*}

In particular, this holds true for $\vt_j = \vt_1, \vt_{n+j} = \vt_{n+1}$ for $j = 1, \ldots, n$.

The countable family of all such homogeneous structures exhausts all homogeneous structures on $\HA{n}{2}$ for which the strong Malcev basis $\{ Z, Y_1, \ldots, X_1 \}$ forms an eigenbasis of the matrix $A$ in $D_r := \exp_{GL(n, \R)} \bigl( A \log(r) \bigr)$. Moreover, two such homogeneous structures with weights $\vt_1, \ldots, \vt_{2n+1} \in \N$ and $\vt'_1, \ldots, \vt'_{2n+1} \in \N$ coincide if there exits an integer $r_0 \in \N$ such that $\vt_j = r_0 \hspace{1pt} \vt'_j$ for all $j = 1, \ldots, 2n+1$.
	\end{prop}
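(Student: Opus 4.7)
The plan is to verify that each $D_r$ in the stated family is a Lie algebra automorphism of $\HA{n}{2}$ by checking the weight equation at every non-trivial bracket. Since $D_r$ is diagonal in the strong Malcev basis $\{Z, Y_1, \ldots, Y_{2n+1}, X_{2n+1}, \ldots, X_1\}$ fixed by Definition~\ref{DefDF}, being a Lie algebra homomorphism reduces to the requirement that, for each non-zero relation $[V, W] = c V'$, the weight $w(V')$ equals $w(V) + w(W)$; the one-parameter group law and positivity of eigenvalues are then immediate from the exponential form \eqref{Dilations} and from $\vt_j \in \N$.

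I would organize the non-zero brackets of $\HA{n}{2}$ into three families: (a) the Heisenberg-type relations $[X_j, X_{n+j}] = X_{2n+1}$ for $j = 1, \ldots, n$; (b) the adjoint action of $Y_{2n+1}$, namely $[Y_{2n+1}, X_{n+j}] = -\tfrac{1}{2} Y_j$ and $[Y_{2n+1}, X_j] = \tfrac{1}{2} Y_{n+j}$ for $j = 1, \ldots, n$; and (c) the central brackets $[X_j, Y_j] = Z$ for $j = 1, \ldots, 2n$ together with $[X_{2n+1}, Y_{2n+1}] = Z$. Family (a) forces $\vt_j + \vt_{n+j}$ to coincide, for every $j$, with a single weight for $X_{2n+1}$, which is exactly the compatibility condition. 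Family (b) then forces $w(Y_j) = \vt_{n+j} + \vt_{2n+1}$ and $w(Y_{n+j}) = \vt_j + \vt_{2n+1}$, matching the prescription. Family (c) yields $w(Z) = \vt_j + \vt_{n+j} + \vt_{2n+1}$, well-defined on account of (a); the additional bracket $[X_{2n+1}, Y_{2n+1}] = Z$ gives the same value $(\vt_1 + \vt_{n+1}) + \vt_{2n+1}$ and therefore produces no new constraint. Hence the system of weight equations is simultaneously consistent precisely when the compatibility condition holds, and its solution agrees with the prescribed weights.

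For the homogeneous dimension, summing weights and using $\sum_{j=1}^n(\vt_j + \vt_{n+j}) = n(\vt_1 + \vt_{n+1})$ gives contributions $\vt_{2n+1} + n(\vt_1 + \vt_{n+1})$ from $\Lie{g}_1$, $(n+1)(\vt_1 + \vt_{n+1}) + 2n\vt_{2n+1}$ from $\Lie{g}_2$, and $\vt_1 + \vt_{n+1} + \vt_{2n+1}$ from $\Lie{g}_3$, totalling $\hdim = (2n+2)(\vt_1 + \vt_{n+1} + \vt_{2n+1})$. For the exhaustion claim, any diagonal family $\{D_r\}_{r > 0}$ having the given basis as eigenbasis must satisfy the same weight equations, so the weights $\vt_1, \ldots, \vt_{2n}, \vt_{2n+1}$ together with compatibility are the only free data and every admissible structure arises in the prescribed form. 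For the coincidence statement, if $\vt_j = r_0 \vt'_j$ for every $j$, then each derived weight scales by $r_0$ as well, so $D_r^{(\vt)}(V) = r^{r_0 w^{(\vt')}(V)} V = D_{r^{r_0}}^{(\vt')}(V)$; since $r \mapsto r^{r_0}$ is a bijection of $(0, \infty)$, the two one-parameter families coincide as sets of maps. The only subtle bookkeeping point I anticipate is the asymmetric crosswise coupling in family (b), which is what produces the two distinct formulas $w(Y_j) = \vt_{n+j} + \vt_{2n+1}$ versus $w(Y_{n+j}) = \vt_j + \vt_{2n+1}$; beyond that, the whole argument is a careful enumeration of the table in Definition~\ref{DefDF}.
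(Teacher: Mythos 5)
Your proof is correct and takes essentially the same approach as the paper, which simply states that the calculations are ``easy and straight-forward'' and omits them. Your systematic enumeration of the three bracket families (Heisenberg-type, the $Y_{2n+1}$-adjoint action, and the central pairings), the derivation of the compatibility constraint and weight formulas from them, the homogeneous dimension count, and the observation that $D_r^{(\vt)} = D_{r^{r_0}}^{(\vt')}$ when $\vt_j = r_0 \vt_j'$ all agree with what the paper intends.
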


	\begin{proof}
The calculations are easy and straight-forward, so we omit them. In order to check that these are the only admissible homogeneous structures with the specific eigenbasis property, we note that there are no other possible combinations of weights $\vt_1, \ldots, \vt_{2n+1} \in \N$ which are compatible with the Lie bracket on $\HA{n}{2}$. Since for graded groups the weights can always be scaled so that all weights are positive integers and their greatest common divisor equals $1$, this proves the claim.
	\end{proof}

For $\vt_1 = \ldots = \vt_{2n+1} = 1$ we obtain the canonical homogeneous structure from Proposition~\ref{NatDilDF}.

As for the harmonic oscillators $\HHO^\rp$, the key to the spectral estimates in \cite{tERo} lies in the choice of a convenient homogeneous quasi-norm on $\HA{n}{2}$.  The following proposition is a special case of Proposition~\ref{QNGrGr}, in which the dilation matrix $A$ is diagonalized by the vectors of the strong Malcev basis.

	\begin{prop} \label{AnhQNDF}
Let $\{ D_r \}_{r > 0}$ be the family of dilations defined in Proposition~\ref{AnhDilDF} for fixed weights $\vt_1, \ldots, \vt_{2n+1} \in \N$, and let $\{ D^*_r \}_{r > 0}$ be the corresponding dilations on $\HA{n}{2}^*$ defined by duality. Moreover, let us express all elements $l \in \HA{n}{2}^*$ in terms of the dual basis of the strong Malcev basis $\{ Z, Y_1, \ldots, X_1 \}$, i.e., $l = l_z Z^* + l_{y_1} Y^*_1 + \ldots + l_{x_1} X^*_1$ for some $l_z, \ldots, l_{x_1} \in \R$. Then the map $\qn{\HA{n}{2}^*, A}{\, . \,}^{\infty}: \HA{n}{2}^* \to [0, \infty)$ defined by
	\begin{align*}
		\qn{\HA{n}{2}^*, A}{l}^{\infty} := \max \Bigl \{ &\Abs{l_z}^{1/(\vt_1 + \vt_{n+1} + \vt_{2n+1})}, \Abs{l_{y_1}}^{1/(\vt_{n+1} + \vt_{2n+1})}, \ldots, \Abs{l_{y_n}}^{1/(\vt_{2n} + \vt_{2n+1})}, \\
		&\Abs{l_{y_{n+1}}}^{1/(\vt_1 + \vt_{2n+1})}, \ldots, \Abs{l_{y_{2n}}}^{1/(\vt_n + \vt_{2n+1})}, \Abs{l_{x_{2n+1}}}^{1/(\vt_1 + \vt_{n+1})}, \\
		&\Abs{l_{y_{2n+1}}}^{1/\vt_{2n+1}}, \Abs{l_{x_{2n}}}^{1/\vt_{2n}}, \ldots, \Abs{l_{x_{n+1}}}^{1/\vt_{n+1}}, \Abs{l_{x_n}}^{1/\vt_n}, \ldots, \Abs{l_{x_1}}^{1/\vt_1}  \Bigr \}
	\end{align*}
is a homogeneous quasi-norm.
	\end{prop}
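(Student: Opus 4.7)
The plan is simply to recognize that Proposition~\ref{AnhQNDF} is an immediate specialization of the general Proposition~\ref{QNGrGr} to the Dynin-Folland setting, with dilations fixed as in Proposition~\ref{AnhDilDF}. Essentially no new work is needed; the argument reduces to reading off a table of weights and checking that the formula displayed in the statement is literally the one produced by Proposition~\ref{QNGrGr}.

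First I would record that the dilations in Proposition~\ref{AnhDilDF} act on every element $V$ of the strong Malcev basis $\{ Z, Y_1, \ldots, X_1 \}$ as $D_r(V) = r^{\vt(V)} V$, where the weights are
\begin{align*}
\vt(Z) &= \vt_1 + \vt_{n+1} + \vt_{2n+1}, & \vt(Y_j) &= \vt_{n+j} + \vt_{2n+1} \hspace{5pt} (j=1,\ldots,n), \\
\vt(Y_{n+j}) &= \vt_j + \vt_{2n+1} \hspace{5pt} (j=1,\ldots,n), & \vt(X_{2n+1}) &= \vt_1 + \vt_{n+1}, \\
\vt(Y_{2n+1}) &= \vt_{2n+1}, & \vt(X_j) &= \vt_j \hspace{5pt} (j=1,\ldots,2n).
\end{align*}
In particular the Malcev basis diagonalizes the generator $A$ of $\{ D_r \}_{r > 0}$, so that it qualifies as an eigenbasis in the sense of Proposition~\ref{QNGrGr}. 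The constraint $\vt_1 + \vt_{n+1} = \ldots = \vt_n + \vt_{2n}$ in Proposition~\ref{AnhDilDF} is precisely what makes the weight assigned to $X_{2n+1}$ well-defined.

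Next I would invoke duality, $\bracket{D^*_r l}{V} = \bracket{l}{D_r V}$, to conclude that the dual basis $\{ Z^*, Y^*_1, \ldots, X^*_1 \}$ is an eigenbasis of $\{ D^*_r \}_{r > 0}$ with the same weights; equivalently, for $l = l_z Z^* + l_{y_1} Y^*_1 + \ldots + l_{x_1} X^*_1$, each coefficient $l_V$ transforms as $l_V \mapsto r^{\vt(V)} l_V$. With this bookkeeping in place, the expression $\max_V \Abs{l_V}^{1/\vt(V)}$, where $V$ ranges over the Malcev basis, coincides term by term with the formula in the statement of Proposition~\ref{AnhQNDF}. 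Proposition~\ref{QNGrGr} then immediately yields the three defining properties of Definition~\ref{DefHQNdual}: the generalized triangle inequality with constant $C = \max \{ C_{1/\vt}, 1 \}$, homogeneity of degree one under $\{ D^*_r \}_{r > 0}$, and faithfulness.

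No genuine obstacle arises; the entire task is a careful matching of indices and reciprocal exponents. The only mild subtlety worth flagging is that several basis vectors may share a common weight (for instance, depending on the $\vt_j$, the weight of $X_{2n+1}$ can coincide with one of the $\vt(Y_j)$), so the distinct weights $\vt_1, \ldots, \vt_M$ of $A$ in Proposition~\ref{QNGrGr} will in general be fewer than $4n + 3$. This is harmless because Proposition~\ref{QNGrGr} is formulated precisely to allow multi-dimensional eigenspaces, and the maximum-norm formula is insensitive to how one groups the basis vectors of a fixed weight.
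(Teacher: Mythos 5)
Your proposal is correct and takes precisely the paper's route: the paper does not give a formal proof at all, but introduces the proposition with the remark that it ``is a special case of Proposition~\ref{QNGrGr}, in which the dilation matrix $A$ is diagonalized by the vectors of the strong Malcev basis,'' which is exactly the observation you spell out (including the role of the constraint $\vt_1 + \vt_{n+1} = \ldots = \vt_n + \vt_{2n}$ in making the weight of $X_{2n+1}$ well-defined, and the harmlessness of repeated weights). Your explicit weight table and the duality step $\bracket{D^*_r l}{V} = \bracket{l}{D_r V}$ are consistent with the paper's conventions, so no gap remains.
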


We can now state the main result of this section.

	\begin{prop} \label{SpecEstHAO}
Let $\pi_{\rp}$ be the unitary irreducible representation of the Dynin-Folland group $\HG{n}{2}$ corresponding to the representative $\rp Z^*, \rp \in \R \setminus \{ 0 \}$. Given a family of dilations on $\HA{n}{2}$ of the type \eqref{GenDilDF}, with weights $\vt_1, \ldots, \vt_{2n+1} \in \N$, and a positive homogeneous Rockland form $\RF \in \UEA{\HA{n}{2}}$, let $\HAO^\rp$ be the anharmonic oscillator on $\H$ associated to $P$ and $\rp$, defined by Definition~\ref{DefHAO}.  Denote by $\hdim_{\Lie{z}}$ the homogeneous dimension of the center $\Liez{\HA{n}{2}}$, which equals $\vt_1 + \vt_{n+1} + \vt_{2n+1}$, and denote by $\hdeg$ the homogeneous degree of $P$ and, a fortiori, of $\HAO^\rp$. Then the number of eigenvalues of $\HAO^\rp$, counted with multiplicities, which are less or equal $\l > 0$ is asymptotically given by
	\begin{align*}
		N(\l, \pi_{\rp}, \RF) \asymp \Abs{\rp}^{-(2n+1)} \hspace{1pt} \l^{\frac{\hdim - \hdim_{\Lie{z}}}{\hdeg}}
		= \Abs{\rp}^{-(2n+1)} \hspace{1pt} \l^{\frac{ (2n+1) (\vt_1 + \vt_{n+1} + \vt_{2n+1}) }{\hdeg}}
	\end{align*}
and
	\begin{align*}
		\hspace{20pt} \l_{\rp, s}
		\asymp \bigl( \Abs{\rp}^{2n+1} \hspace{1pt} s \bigr )^{\frac{\hdeg}{\hdim - \hdim_{\Lie{z}}}}
		\hspace{10pt} \mbox{ for } \hspace{10pt} s = 1, 2, \ldots,
	\end{align*}
with constants which are uniform in $\rp \in \R \setminus \{ 0 \}$, and $\l$ and $\l_{\rp, s}$, respectively.
	\end{prop}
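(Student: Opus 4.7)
The plan is to follow the same template as the proof of Proposition~\ref{SpecEstHHO}, adjusting for the general weights $\vt_1,\ldots,\vt_{2n+1}$ and the generic Rockland form $\RF$. The structural ingredients (the orbit structure, the flatness of generic orbits, the Plancherel decomposition) do not depend on the choice of dilations on $\HA{n}{2}$, only on the Lie bracket. The dilations enter only through the homogeneous quasi-norm on $\HA{n}{2}^*$ and through the homogeneous degree $\hdeg$ of $\RF$.

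First, I would establish the orbital measure. Exactly as in the harmonic oscillator case, the Plancherel formula applied to $\Leb_{\HA{n}{2}^*}=\int^\oplus \om_{\pi_\rp}\, d\Pla(\pi_\rp)$ together with the formula $d\Pla(\pi_\rp)=|\rp|^{2n+1}\, d\rp$ (cf.~\cite[p.~9]{FollMeta} or \cite[Thm.~3.18]{Ro14}) yields $\om_{\pi_\rp}=|\rp|^{-(2n+1)}\Leb_{\R^{4n+2}}$ on the flat affine orbit $\Orbit_{\rp Z^*}=\rp Z^*+\Rspan{Y_1^*,\ldots,X_1^*}$. This step is identical to Proposition~\ref{SpecEstHHO} since it is purely a statement about coadjoint orbits.

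Next, I would compute $N_0(\l,\pi_\rp)$ using the quasi-norm $\qn{\HA{n}{2}^*,A}{\cdot}^\infty$ from Proposition~\ref{AnhQNDF}. On $\Orbit_{\rp Z^*}$, the $Z^*$-coordinate is fixed at $\rp$, so the ball of radius $\l$ meets the orbit only when $\l\geq |\rp|^{1/(\vt_1+\vt_{n+1}+\vt_{2n+1})}=|\rp|^{1/\hdim_{\Lie{z}}}$; assuming this, the ball is a product of intervals and its Lebesgue measure is, up to a factor of $2^{4n+2}$, equal to $\l$ raised to the sum of the weights of $Y_1,\ldots,Y_{2n+1},X_{2n+1},\ldots,X_1$. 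Using the constraint $\vt_j+\vt_{n+j}=\vt_1+\vt_{n+1}$, this sum telescopes:
\begin{align*}
\sum_{j=1}^n(\vt_{n+j}+\vt_{2n+1})+\sum_{j=1}^n(\vt_j+\vt_{2n+1})+(\vt_1+\vt_{n+1})+\vt_{2n+1}+\sum_{j=1}^{2n}\vt_j
&=(2n+1)(\vt_1+\vt_{n+1}+\vt_{2n+1})\\
&=\hdim-\hdim_{\Lie{z}}.
\end{align*}
Thus $N_0(\l,\pi_\rp)\asymp |\rp|^{-(2n+1)}\l^{\hdim-\hdim_{\Lie{z}}}$ uniformly in $\rp$ for $\l\geq|\rp|^{1/\hdim_{\Lie{z}}}$, and vanishes otherwise.

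Finally, applying Theorem~\ref{ThmtERo} with the replacement $\l\mapsto\l^{1/\hdeg}$ yields the first claimed asymptotic $N(\l,\pi_\rp,\RF)\asymp|\rp|^{-(2n+1)}\l^{(\hdim-\hdim_{\Lie{z}})/\hdeg}$. The eigenvalue asymptotic is then extracted by the same squeezing argument as in the proof of Proposition~\ref{SpecEstHHO}: for small enough $\e>0$ and $s$ sufficiently large, $N(\l_{\rp,s}-\e,\pi_\rp,\RF)\leq s\leq N(\l_{\rp,s},\pi_\rp,\RF)$, and inverting the relation $N\asymp|\rp|^{-(2n+1)}\l^{(\hdim-\hdim_{\Lie{z}})/\hdeg}$ produces $\l_{\rp,s}\asymp(|\rp|^{2n+1}s)^{\hdeg/(\hdim-\hdim_{\Lie{z}})}$ with constants uniform in $\rp$. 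The only real bookkeeping obstacle is the arithmetic of the exponent sum, which works out cleanly precisely because of the compatibility condition $\vt_j+\vt_{n+j}=\vt_1+\vt_{n+1}$ imposed in Proposition~\ref{AnhDilDF}; without this condition the quasi-norm on $\HA{n}{2}^*$ would not even be well-defined, so no additional care is needed.
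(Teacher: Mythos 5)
Your proposal is correct and follows the same route as the paper: the paper's own proof also reduces to the computation of $N_0(\l,\pi_\rp)$ via the quasi-norm of Proposition~\ref{AnhQNDF}, with the orbital measure $\om_{\pi_\rp}=|\rp|^{-(2n+1)}\Leb_{\R^{4n+2}}$ and the eigenvalue squeezing argument imported verbatim from Proposition~\ref{SpecEstHHO}. Your explicit verification that the weight sum on the orbit equals $(2n+1)(\vt_1+\vt_{n+1}+\vt_{2n+1})=\hdim-\hdim_{\Lie{z}}$ via the constraint $\vt_j+\vt_{n+j}=\vt_1+\vt_{n+1}$ is the same bookkeeping the paper carries out, just written out more fully.
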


	\begin{proof}
The proof is almost identical to the proof of Proposition~\ref{SpecEstHHO}; for this reason we focus on the only crucial difference, the measure $N_0(\l, \pi_{\rp})$ for the quasi-norm $\qn{\HA{n}{2}^*, A}{\, . \,}^{\infty}$.
Thus we compute
	\begin{align*}
		N_0(\l, \pi_{\rp}) &= \om_{\pi_{\rp}} \bigl( \bigl \{ l \in \Orbit_{\rp Z^*} : \qn{\HA{n}{2}^*, A}{l}^{\infty} \leq \l \bigr \} \bigr) \\
		 &= \Abs{\rp}^{-(2n+1)} \Leb_{\R^{4n+2}} \Bigl( \bigl \{ l \in \Orbit_{\rp Z^*} : \Abs{l_{y_j}} \leq \l^{\vt_{n+j} + \vt_{2n+1}}, \Abs{l_{y_{n+j}}} \leq \l^{\vt_j + \vt_{2n+1}}, \Abs{l_{x_{2n+1}}} \leq \l^{\vt_1 + \vt_{n+1}}, \\
		 & \hspace{55pt} \Abs{l_{y_{2n+1}}} \leq \l^{\vt_{2n+1}}, \Abs{l_{x_{n+j}}} \leq \l^{\vt_{n+j}}, \Abs{l_{x_j}} \leq \l^{\vt_j}, \hspace{5pt} j = 1, \ldots, n \bigr \} \Bigr ) \\
		 &= \Abs{\rp}^{-(2n+1)} \l^{(2n+1) (\vt_1 + \vt_{n+1} + \vt_{2n+1})}.
	\end{align*}
Hence by \cite[Thm.~4.1]{tERo} we have $N_0(\l^{\frac{1}{\hdeg}}, \pi_{\rp}) \asymp N(\l, \pi_{\rp}, \RF) \asymp \Abs{\rp}^{-(2n+1)} \hspace{1pt} \l^{\frac{(2n+1) (\vt_1 + \vt_{n+1} + \vt_{2n+1})}{\hdeg}}$. 
	\end{proof}

Let us provide a few examples. The first example is generic in the sense that it covers the class of so-called ``classical'' Rockland forms in $\UEA{\HA{n}{2}}$ whose positivity and homogeneity follows from a general statement for graded groups equipped with a gradation with weights $\vt_1, \vt_2, \ldots$. Let us recall this statement; for a proof we refer to \cite[Lem.~4.1.8]{FiRuMon}.

	\begin{lem}
Let $G$ be a graded group of topological dimension $\dimG$ equipped with a family of dilations $\{ D_r \}_{r > 0}$ with weights $\vt_1, \ldots, \vt_\dimG$. If we fix an eigenbasis $\{ X_1, \ldots, X_\dimG \}$ of $A$ in $D_r := \exp_{GL(n, \R)} \bigl( A \log(r) \bigr)$ and if $\hdeg_0$ is a common multiple of $\vt_1, \ldots, \vt_\dimG$, then the operator
	\begin{align}
		\sum_{j=1}^\dimG (-1)^{\frac{\hdeg_0}{\vt_j}} c_j \hspace{1pt} X_j^{\frac{2 \hdeg_0}{\vt_j}} \label{GenClassicalRF}
	\end{align}
with $c_1, \ldots, c_\dimG > 0$ is a positive Rockland operator of homogeneous degree $\hdeg = 2 \hdeg_0$.
	\end{lem}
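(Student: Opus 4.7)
The plan is to verify three properties of $P := \sum_{j=1}^{\dimG} (-1)^{\hdeg_0/\vt_j} c_j X_j^{2\hdeg_0/\vt_j}$ in turn: (i) homogeneity of degree $2\hdeg_0$, (ii) positivity (and essential self-adjointness), and (iii) the Rockland property. Homogeneity is immediate from the definition of the dilations: each $X_j$ lifts to a left-invariant vector field on $G$ of homogeneous degree $\vt_j$, so $X_j^{2\hdeg_0/\vt_j}$ has degree $2\hdeg_0$, and since $\hdeg_0$ is a common multiple of the $\vt_j$ by assumption each exponent is a positive integer, so all the summands live in the same homogeneous component and the sum is again homogeneous of degree $\hdeg = 2\hdeg_0$.

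For positivity, the key observation is that each left-invariant vector field $X_j$ is formally skew-adjoint on $L^2(G)$, so $(X_j^k)^* = (-1)^k X_j^k$. In particular, for any smooth vector $v$ of any unitary representation $\pi$ (including the regular representation), one computes
\begin{align*}
\bracket{d\pi(X_j)^{2m} v}{v} = (-1)^m \, \Norm{}{d\pi(X_j)^m v}^2,
\end{align*}
and hence, setting $m_j := \hdeg_0/\vt_j$, the sign $(-1)^{m_j}$ in the definition of $P$ ensures that each term $(-1)^{m_j} c_j \, d\pi(X_j)^{2 m_j}$ defines a non-negative quadratic form. Since $c_j > 0$, the operator $d\pi(P)$ is non-negative on smooth vectors, and by the standard Friedrichs procedure extends to a positive self-adjoint operator; the fact that $P$ is left-invariant, homogeneous, and non-negative on $C_c^\infty(G)$ already implies essential self-adjointness on this core.

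The Rockland condition is the real heart of the proof: for every non-trivial $\pi \in \widehat{G}$ I must show that $d\pi(P) v = 0$ forces $v = 0$ for any smooth vector $v \in \RS^\infty$. Taking the inner product with $v$ and using the identity above,
\begin{align*}
0 = \bracket{d\pi(P) v}{v} = \sum_{j=1}^{\dimG} c_j \, \Norm{}{d\pi(X_j)^{m_j} v}^2,
\end{align*}
so $d\pi(X_j)^{m_j} v = 0$ for every $j$. To descend from the $m_j$-th power to the first power I would exploit the skew-adjointness of $d\pi(X_j)$: if $A$ is skew-adjoint on $\RS^\infty$ and $A^m v = 0$, then iterating $\bracket{A^m v}{A^{m-2} v} = -\Norm{}{A^{m-1} v}^2 = 0$ gives $A^{m-1} v = 0$, and induction yields $A v = 0$. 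Applying this to each $A = d\pi(X_j)$ gives $d\pi(X_j) v = 0$ for all $j$, and since $\{X_1, \dots, X_\dimG\}$ spans $\Lie{g}$ this means $d\pi(X) v = 0$ for all $X \in \Lie{g}$, i.e., $\pi(g) v = v$ for every $g \in G$. By irreducibility of $\pi$, the closed invariant subspace $\CF v$ is either $\{0\}$ or $\RS_\pi$, and in the latter case $\pi$ would be trivial, contradicting our assumption. Hence $v = 0$, as required.

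The main obstacle is the descent step in (iii): from $d\pi(X_j)^{m_j} v = 0$ to $d\pi(X_j) v = 0$. This requires that one can treat $d\pi(X_j)$ as a skew-symmetric operator on the space of smooth vectors and legitimately apply the identity $\bracket{A^m v}{A^{m-2} v} = -\Norm{}{A^{m-1} v}^2$. Since smooth vectors are invariant under all $d\pi(X)$ and sit inside the domains of all powers, this is justified, but it is worth spelling out. Everything else (homogeneity, positivity, the conclusion $\hdeg = 2\hdeg_0$) is essentially book-keeping.
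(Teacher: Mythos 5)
Your proof is correct. The paper itself does not include a proof of this lemma but defers to \cite[Lem.~4.1.8]{FiRuMon}, and your argument --- homogeneity from the integer exponents $2\hdeg_0/\vt_j$, positivity from skew-symmetry of $d\pi(X_j)$ on smooth vectors, and the Rockland property via descent from $d\pi(X_j)^{m_j}v=0$ to $d\pi(X_j)v=0$ followed by irreducibility of $\pi$ --- is precisely the standard proof given there.
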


	\begin{ex} [Generic ``Classical'' Anharmonic Oscillator on $\H$] \label{GenericGradedEx}
Given a homogeneous structure on $\HA{n}{2}$ characterized by Proposition~\ref{AnhDilDF} with weights $\vt_1, \dots, \vt_{2n+1} \in \N$, and given a common multiple $\hdeg_0$ of the weights $\vt_1, \ldots, \vt_{2n+1}, \vt_{1} + \vt_{n+1}, \vt_1 + \vt_{2n+1}, \dots, \vt_{2n} + \vt_{2n+1}, \vt_1 + \vt_{n+1} + \vt_{2n+1}$, the form $P \in \UEA{\HA{n}{2}}$ defined by
	\begin{align*}
		P := \sum_{j=1}^n &(-1)^{\frac{\hdeg_0}{\vt_j}} c_{X_j} \hspace{1pt} X_j^{\frac{2 \hdeg_0}{\vt_j}} + (-1)^{\frac{\hdeg_0}{\vt_{n+j}}} c_{X_{n+j}} \hspace{1pt} X_{n+j}^{\frac{2 \hdeg_0}{\vt_{n+j}}} + (-1)^{\frac{\hdeg_0}{\vt_{2n+1}}} c_{Y_{2n+1}} \hspace{1pt} Y_{2n+1}^{\frac{2 \hdeg_0}{\vt_{2n+1}}} \\
		+ &(-1)^{\frac{\hdeg_0}{\vt_1 + \vt_{n+1}} } c_{X_{2n+1}} \hspace{1pt} X_{2n+1}^{\frac{2 \hdeg_0}{\vt_1 + \vt_{n+1}} }+ (-1)^{\frac{\hdeg_0}{ \vt_{n+j} + \vt_{2n+1}} } c_{Y_j} \hspace{1pt} Y_j^{\frac{2 \hdeg_0}{ \vt_{n+j} + \vt_{2n+1}} } \\
		+ &(-1)^{\frac{\hdeg_0}{\vt_j + \vt_{2n+1}} } c_{Y_{n+j}} \hspace{1pt} Y_{n+j}^{\frac{2 \hdeg_0}{\vt_j + \vt_{2n+1}} }
		+ (-1)^{\frac{\hdeg_0}{\vt_1 + \vt_{n+1} + \vt_{2n+1}} } c_Z \hspace{1pt} Z^{\frac{2 \hdeg_0}{\vt_1 + \vt_{n+1} + \vt_{2n+1}} }
	\end{align*}
with $c_{X_1}, \ldots, c_Z > 0$ is a positive Rockland form of homogeneous degree $\hdeg = 2 \hdeg_0$.  The associated anharmonic oscillator of parameter $\rp \in \R \setminus \{ 0 \}$ is given by
	\begin{align*}
		\HAO^\rp = \sum_{j=1}^n &(-1)^{\frac{\hdeg_0}{\vt_j}} c_{X_j} \hspace{1pt} \bigl( \partial_{t_j} - \frac{1}{2} t_{n+j} \partial_{t_{2n+1}} \bigr)^{2 \frac{\hdeg_0}{\vt_j}} 
		+ (-1)^{\frac{\hdeg_0}{\vt_{n+j}}} c_{X_{n+j}} \hspace{1pt} \bigl( \partial_{t_{n+j}}
		+ \frac{1}{2} t_j \partial_{t_{2n+1}} \bigr)^{2 \frac{\hdeg_0}{\vt_{n+j}}} \\
		+ &(-1)^{\frac{\hdeg_0}{\vt_{2n+1}}} c_{Y_{2n+1}} \hspace{1pt} (2 \pi \Abs{\rp} \hspace{1pt} t_{2n+1})^{\frac{2 \hdeg_0}{\vt_{2n+1}}}
		+ (-1)^{\frac{\hdeg_0}{\vt_1 + \vt_{n+1}} } c_{X_{2n+1}} \hspace{1pt} \partial_{t_{2n+1}}^{\frac{2 \hdeg_0}{\vt_1 + \vt_{n+1}} } \\
		+ &(-1)^{\frac{\hdeg_0}{ \vt_{n+j} + \vt_{2n+1}} } c_{Y_j} \hspace{1pt} (2 \pi \Abs{\rp} \hspace{1pt} t_j)^{\frac{2 \hdeg_0}{ \vt_{n+j} + \vt_{2n+1}} }
		+ (-1)^{\frac{\hdeg_0}{\vt_j + \vt_{2n+1}} } c_{Y_{n+j}} \hspace{1pt} (2 \pi \Abs{\rp} \hspace{1pt} t_{n+j})^{\frac{2 \hdeg_0}{\vt_j + \vt_{2n+1}} }
		 \\
		+ &(-1)^{\frac{\hdeg_0}{\vt_1 + \vt_{n+1} + \vt_{2n+1}} } c_Z \hspace{1pt} (2 \pi \Abs{\rp})^{\frac{2 \hdeg_0}{\vt_1 + \vt_{n+1} + \vt_{2n+1}} }.
	\end{align*}
We then have $N(\l, \pi_{\rp}, \RF) \asymp \Abs{\rp}^{-(2n+1)} \hspace{1pt} \l^{\frac{(2n+1)(\vt_1 + \vt_{n+1} + \vt_{2n+1})}{\hdeg_0}}$.
	\end{ex}

On stratified groups one may restrict the basis vectors in \eqref{GenClassicalRF} to those which span the first stratum. Again, we recall the general result and refer to \cite[Cor.~4.1.10]{FiRuMon} for a proof.

	\begin{lem}
Let $G$ be a stratified group of topological dimension $\dimG$ equipped with a family of dilations $\{ D_r \}_{r > 0}$ with weights $\vt_1, \ldots, \vt_\dimG$. If $\{ X_1, \ldots, X_\dimG \}$ is an eigenbasis of $A$ in $D_r := \exp_{GL(n, \R)} \bigl( A \log(r) \bigr)$ whose first $\dimG'$-many vectors span the first stratum of $\Lie{g}$\footnote{In particular, this holds true for the canonical homogeneous structure related to the stratification and any basis given as the union of bases of the strata.} and if $\hdeg_0$ is a common multiple of $\vt_1, \ldots, \vt_{\dimG'}$, then the operator
	\begin{align}
		\sum_{j=1}^{\dimG'} (-1)^{\frac{\hdeg_0}{\vt_j}} c_j \hspace{1pt} X_j^{\frac{2 \hdeg_0}{\vt_j}} \label{GenClassicalRF}
	\end{align}
with $c_1, \ldots, c_{\dimG'} > 0$ is a positive Rockland operator of homogeneous degree $\hdeg = 2 \hdeg_0$.
	\end{lem}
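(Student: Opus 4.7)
The plan is to check the three required properties in turn, treating homogeneity and positivity as a routine carry-over from the preceding graded lemma and concentrating on how the stratified hypothesis enters the Rockland step. Homogeneity is immediate, since each summand $c_j X_j^{2\hdeg_0/\vt_j}$ has homogeneous degree $\vt_j \cdot (2\hdeg_0/\vt_j) = 2\hdeg_0$. Positivity of $dR(P)$ on $\L{2}{G}$ follows from skew-adjointness of each $dR(X_j)$: for any Schwartz function $f$ one computes
\[
\langle dR(P) f, f \rangle \;=\; \sum_{j=1}^{\dimG'} c_j \,\bigl\| dR(X_j)^{\hdeg_0/\vt_j} f \bigr\|^2 \;\geq\; 0,
\]
using that $(-1)^{\hdeg_0/\vt_j} \langle dR(X_j)^{2\hdeg_0/\vt_j} f, f \rangle = \| dR(X_j)^{\hdeg_0/\vt_j} f \|^2$.

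For the Rockland property, I would fix a non-trivial $\pi \in \widehat{G}$ and a smooth vector $v \in \HS_\pi^\infty$ with $d\pi(P) v = 0$. The same skew-adjointness identity applied inside $d\pi$ yields $\sum_{j=1}^{\dimG'} c_j \| d\pi(X_j)^{\hdeg_0/\vt_j} v \|^2 = 0$, so $d\pi(X_j)^{\hdeg_0/\vt_j} v = 0$ for each $j$. Since $i \, d\pi(X_j)$ is essentially self-adjoint on the smooth vectors, a spectral-theorem argument (the scalar spectral measure of $d\pi(X_j)$ at $v$ must be concentrated at $\{0\}$) then upgrades this to $d\pi(X_j) v = 0$ for every $j = 1, \ldots, \dimG'$.

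Here the stratified hypothesis intervenes decisively. Because $\{X_1, \ldots, X_{\dimG'}\}$ spans the first stratum $\Lie{g}_1$ and $\Lie{g}_1$ generates $\Lie{g}$ as a Lie algebra by the defining property of a stratification, the exponentials $\exp(\Lie{g}_1) \subset G$ generate $G$ as a group---a standard consequence of the Baker--Campbell--Hausdorff formula for simply connected nilpotent Lie groups. Differentiating $t \mapsto \pi(\exp tX_j) v$ at $t = 0$ yields $\pi(\exp tX_j) v = v$ for all $t \in \R$ and all $j$; by the generation statement, $\pi(g) v = v$ for every $g \in G$, and irreducibility and non-triviality of $\pi$ force $v = 0$. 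Compared to the preceding graded lemma, this is the only genuinely new step: there the sum runs over a full eigenbasis of $\Lie{g}$ and $d\pi(X_j) v = 0$ for every $j$ immediately gives $\pi(g) v = v$ via exponential coordinates, whereas in the stratified setting one must supplement the argument with the passage from Lie-algebra generation by $\Lie{g}_1$ to group-level generation by $\exp(\Lie{g}_1)$. This passage, together with the mild spectral-theory step (needed because $\hdeg_0/\vt_j$ need not be a power of two), is the main point requiring care.
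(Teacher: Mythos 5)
Your proof is correct, but note that the paper does not supply its own proof of this lemma: it is stated as a known result with a pointer to \cite[Cor.~4.1.10]{FiRuMon}, so there is no internal argument to compare against. Your reasoning follows the standard line that the cited reference also takes: homogeneity by counting, positivity by the skew-adjointness identity, and the Rockland property by reducing to $d\pi(X_j)v = 0$ on the first stratum and then exploiting that the first stratum generates.

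Two small remarks on economy. The spectral-theoretic step from $d\pi(X_j)^{\hdeg_0/\vt_j} v = 0$ to $d\pi(X_j) v = 0$ can be replaced by a purely algebraic descent: skew-symmetry of $d\pi(X_j)$ on the (stable) space of smooth vectors gives, for $m \geq 2$,
\[
\bigl\| d\pi(X_j)^{m-1} v \bigr\|^2 \;=\; -\bigl\langle d\pi(X_j)^{m-2} v,\ d\pi(X_j)^{m} v \bigr\rangle \;=\; 0,
\]
so iterating downward from $m = \hdeg_0/\vt_j$ yields $d\pi(X_j)v = 0$ with no appeal to essential self-adjointness or the spectral theorem. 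Similarly, the passage from annihilation by the first stratum to annihilation by all of $G$ can be done at the Lie-algebra level: since $\HS_\pi^\infty$ is stable under $d\pi$, the relations $d\pi(X_i)v = d\pi(X_j)v = 0$ give $d\pi([X_i, X_j])v = d\pi(X_i)d\pi(X_j)v - d\pi(X_j)d\pi(X_i)v = 0$, and by induction on bracket length one obtains $d\pi(X)v = 0$ for every $X \in \Lie{g}$; exponentiating along one-parameter subgroups then forces $\pi(g)v = v$ for all $g$, whence $v = 0$ by irreducibility and non-triviality. Your route via the fact that $\exp(\Lie{g}_1)$ generates $G$ is equally valid; the bracket argument merely avoids invoking that group-level fact separately.
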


	\begin{ex} \label{GenericStratEx}
Given a homogeneous structure on $\HA{n}{2}$ characterized by Proposition~\ref{AnhDilDF} with weights $\vt_1, \dots, \vt_{2n+1} \in \N$, and given a common multiple $\hdeg_0$ of these weights, the form $P \in \UEA{\HA{n}{2}}$ defined by
	\begin{align*}
		P := \sum_{j=1}^n \left ( (-1)^{\frac{\hdeg_0}{\vt_j}} c_j \hspace{1pt} X_j^{2 \frac{\hdeg_0}{\vt_j}} + (-1)^{\frac{\hdeg_0}{\vt_{n+j}}} c_{n+j} \hspace{1pt} X_{n+j}^{2 \frac{\hdeg_0}{\vt_{n+j}}} \right ) + (-1)^{\frac{\hdeg_0}{\vt_{2n+1}}} c_{2n+1} \hspace{1pt} Y_{2n+1}^{2 \frac{\hdeg_0}{\vt_{2n+1}}}
	\end{align*}
with $c_1, \ldots, c_{2n+1} > 0$ is a positive Rockland form of homogeneous degree $\hdeg = 2 \hdeg_0$. The associated anharmonic oscillator of parameter $\rp \in \R \setminus \{ 0 \}$ is given by
	\begin{align*}
		\HAO^\rp = \sum_{j=1}^n \Biggl ( &(-1)^{\frac{\hdeg_0}{\vt_j}} c_j \hspace{1pt} \bigl( \partial_{t_j} - \frac{1}{2} t_{n+j} \partial_{t_{2n+1}} \bigr)^{2 \frac{\hdeg_0}{\vt_j}} 
		+ (-1)^{\frac{\hdeg_0}{\vt_{n+j}}} c_{n+j} \hspace{1pt} \bigl( \partial_{t_{n+j}} + \frac{1}{2} t_j \partial_{t_{2n+1}} \bigr)^{2 \frac{\hdeg_0}{\vt_{n+j}}} \Biggr ) \\
		+ &(-1)^{\frac{\hdeg_0}{\vt_{2n+1}}} c_{2n+1} \hspace{1pt} (2 \pi \Abs{\rp} \hspace{1pt} t_{2n+1})^{2 \frac{\hdeg_0}{\vt_{2n+1}}}.
	\end{align*}
We then have $N(\l, \pi_{\rp}, \RF) \asymp \Abs{\rp}^{-(2n+1)} \hspace{1pt} \l^{\frac{(2n+1)(\vt_1 + \vt_{n+1} + \vt_{2n+1})}{\hdeg_0}}$.
	\end{ex}

The following examples will be concrete special cases of the generic Examples~\ref{GenericGradedEx} and \ref{GenericStratEx}.

	\begin{ex} \label{RandomEx}
If we pick $\vt_1 = \ldots = \vt_n = 5, \vt_{n+1} = \ldots = \vt_{2n} = 4, \vt_{2n+1} = 3$, for example, then the associated family of dilations defined by \eqref{GenDilDF} is given by
	\begin{equation}
	\left\{\begin{array}{rclrclrcl}
		D_r(Z) &=& r^{12} Z, &&&&&& \\
		D_r(Y_j) &=& r^{7} Y_j, &D_r(Y_{n + j}) &=& r^{8} Y_{n + j}, &D_r(X_{2n+1}) &=& r^{9} X_{2n+1}, \\
		D_r(X_j) &=& r^5 X_j, &D_r(X_{n + j}) &=& r^4 X_{n + j}, &D_r(Y_{2n+1}) &=& r^3 Y_{2n+1}.
	\end{array}\right. \nn 
	\end{equation}
for $j = 1, \ldots, n$. If we choose $\hdeg_0 = 13 \cdot 5 \cdot 8 \cdot 9 \cdot 7 = 32760$, which is the 13-fold least common multiple of $5, 7, 8, 9, 12$, then by \cite[Lem.~4.1.8]{FiRuMon}\footnote{or Example~\ref{GenericGradedEx}} the form
	\begin{align*}
		\RF = \sum_{j=1}^n e^\pi \hspace{1pt} X_j^{13104} + \pi^e \hspace{1pt} X_{n+j}^{16380} + \pi^\pi \hspace{1pt} Y_{2n+1}^{21840} + e^e \hspace{1pt} X_{2n+1}^{7280} + \frac{e}{\pi} \hspace{1pt} Y_j^{9360} - \frac{\pi}{e} \hspace{1pt} Y_{n+j}^{8190} + \frac{e^e}{\pi^\pi} \hspace{1pt} Z^{5460} \in \UEA{\HA{n}{2}}
	\end{align*}
is a positive homogeneous Rockland form of homogeneous degree $\hdeg =  65520$. The associated anharmonic oscillator $\HAO = d\pi_{\rp}(\RF)$ equals
	\begin{align*}
		\HAO^\rp = \sum_{j=1}^n &e^\pi \hspace{1pt} \bigl( \partial_{t_j} - \frac{1}{2} t_{n+j} \partial_{t_{2n+1}} \bigr)^{13104}
		+ \pi^e \hspace{1pt} \bigl( \partial_{t_{n+j}} + \frac{1}{2} t_j \partial_{t_{2n+1}} \bigr)^{16380}
		+ \pi^\pi \hspace{1pt} (2 \pi \Abs{\rp} \hspace{1pt} t_{2n+1})^{21840} \\
		+ &e^e \hspace{1pt} (2 \pi \Abs{\rp} \hspace{1pt} \partial_{t_{2n+1}}^{7280}
		+ \frac{e}{\pi} \hspace{1pt} (2 \pi \Abs{\rp} \hspace{1pt} t_j)^{9360}
		- \frac{\pi}{e} \hspace{1pt} (2 \pi \Abs{\rp} \hspace{1pt} t_{n+j})^{8190}
		+ \frac{e^e}{\pi^\pi} \hspace{1pt} (2 \pi \Abs{\rp})^{5460}.
	\end{align*}
We then have $N(\l, \pi_{\rp}, \RF) \asymp \Abs{\rp}^{-(2n+1)} \hspace{1pt} \l^{\frac{(2n+1)}{5460}}$.
	\end{ex}

	\begin{ex}
For $\vt_1 = \ldots = \vt_{2n} =1, \vt_{2n+1} = k \in \N$, the dilations defined by \eqref{GenDilDF} are given by
	\begin{equation}
	\left\{\begin{array}{rclrclrcl}
		D_r(Z) &=& r^{k + 2} Z, &&&&&& \\
		D_r(Y_j) &=& r^{k + 1} Y_j, &D_r(Y_{n + j}) &=& r^{k + 1} Y_{n + j}, &D_r(X_{2n+1}) &=& r^2 X_{2n+1}, \\
		D_r(X_j) &=& r X_j, &D_r(X_{n + j}) &=& r X_{n + j}, &D_r(Y_{2n+1}) &=& r^k Y_{2n+1}.
	\end{array}\right. \nn 
	\end{equation}
for $j = 1, \ldots, n$. By \cite[Lem.~4.1.8]{FiRuMon}, the form
	\begin{align*}
		\RF = \sum_{j=1}^n \left ( (-1)^k \hspace{1pt} X_j^{2k} + (-1)^k X_{n+j}^{2k} \right ) - \hspace{1pt} Y_{2n+1}^2 \in \UEA{\HA{n}{2}}
	\end{align*}
is a positive homogeneous Rockland form of homogeneous degree $\hdeg = 2k$. The associated anharmonic oscillator $\HAO = d\pi_{\rp}(\RF)$ equals
	\begin{align*}
		\HAO^\rp = \sum_{j=1}^n (-1)^k \left ( \bigl( \partial_{t_j} - \frac{1}{2} t_{n+j} \partial_{t_{2n+1}} \bigr)^{2k} + \bigl( \partial_{t_{n+j}} + \frac{1}{2} t_j \partial_{t_{2n+1}} \bigr)^{2k} \right ) + 4 \pi^2 \rp^2 \hspace{1pt} t_{2n+1}^2.
	\end{align*}
We then have $N(\l, \pi_{\rp}, \RF) \asymp \Abs{\rp}^{-(2n+1)} \hspace{1pt} \l^{\frac{(2n+1) (k+2)}{2k}}$.
	\end{ex}

	\begin{ex} \label{NoGoEx}
There are no $\vt_1, \ldots, \vt_{2n+1} \in \N$ so that for any integer $k > 1$ the form and associated operator
	\begin{align*}
		\RF &= - \sum_{j=1}^n \left ( X_j^2 + X_{n+j}^2 \right ) \pm \hspace{1pt} Y_{2n+1}^{2k}, \\
		\HAO^\rp &= - \SL_{\H} \pm (2 \pi \Abs{\rp})^{2k} \hspace{1pt} t_{2n+1}^{2k}
	\end{align*}
satisfy our criteria of Definition~\ref{DefHAO}. While it is straight-forward to check that for every integer $k > 1$, the operator
	\begin{align*}
		\RO_{+} := - \SL_{\H} + (2 \pi \Abs{\rp})^{2k} \hspace{1pt} t_{2n+1}^{2k}
	\end{align*}
is positive and satisfies the Rockland property \eqref{RockPr}, it is neither a homogeneous operator nor an operator whose leading order is a positive homogeneous Rockland operator, the leading order being $(2 \pi \Abs{\rp})^{2k} \hspace{1pt} t_{2n+1}^{2k}$, which itself does not satisfy \eqref{RockPr}.
The operator
	\begin{align*}
		\RO_{-} := - \SL_{\H} - (2 \pi \Abs{\rp})^{2k} \hspace{1pt} t_{2n+1}^{2k}
	\end{align*}
does not even necessarily satisfy the Rockland property.
	\end{ex}

\section{A Generalization to Graded $SI/Z$-Groups With $1$-Dimensional Center} \label{SecSpecEstSIZ}

In this section we show that the spectral estimates for the harmonic and anharmonic oscillators on $\R^n$ and the Heisenberg group $\H$, defined via the generic unitary irreducible representations of $\H$ and the Dynin-Folland group $\HG{n}{2}$, respectively, effortlessly extend to the case of operators $d\pi(\RF)$ which are defined via the generic representations $\pi \in \widehat{G}$ for a general graded group $G$, equipped with the canonical dilations, which satisfies
	\begin{itemize}
		\item[(SI)] $SI/Z(G) \neq \emptyset$, i.e., there exists a $\pi \in \widehat{G}$ which is square-integrable modulo the center $Z(G)$,
		\item[(Z)] $\dim(Z(G)) = 1$.
	\end{itemize}
We recall that by a celebrated theorem by Moore and Wolf~\cite{MoWo} the condition (SI) is satisfied if and only if almost all $\pi \in \widehat{G}$ are square-integrable modulo $Z(G)$, and if and only if the corresponding coadjoint orbits $\Orbit_\pi$ are affine subspaces of $\Lie{g}^*$ of codimension $\dim(Z(G))$. Such orbits are in particular flat. 
We will denote the set of such representations by $SI/Z(G)$. Thus, $SI/Z(G) \neq \emptyset$ implies $\widehat{G} \setminus SI/Z(G)$ has Plancherel measure zero.

The groups $\H$ and $\HG{n}{2}$ satisfy (SI) and (Z), and recently nilpotent groups with flat orbits, in particular graded ones, have been studied from the viewpoint of the Kohn-Nirenberg and Weyl-Pedersen quantizations in M\u{a}ntoiu and Ruzhansky~\cite{MaRu18}. Thus, (SI) and (Z) will be our standing assumptions throughout this section for a generic, but fixed graded group $G$ of dimension $\dimG$ and homogeneous dimension $\hdim$. The following additional working assumptions do not restrict the generality of our statements, yet we require them for the main statement of this section and its proof.

	\begin{ass} \label{AssGrGr}
We assume $G$ is gradable and satisfies (SI) and (Z). The group must therefore be of dimension $\dimG = 2n + 1$ for some $n \in \N$. Moreover, we fix a gradation of $\Lie{g}$ and denote it by $\Lie{g} = \oplus_{k = 1}^N \Lie{g}_k$ with $N \in \N$. It was shown in \cite[Lem.~4.16]{GrRo18} that the condition (Z) implies that $\Lie{g}_N = \Liez{g}$. Moreover, every basis for $\Lie{g}$ given as the union of bases for $\Lie{g}_k$ is a strong Malcev basis for $\Lie{g}$, which thus passes through $\Liez{g}$. We fix such a basis and denote it by $\{ Z, X_{2n}, \ldots, X_1 \}$. Since these basis vectors need not be eigenvectors of the dilation matrix $A$ in $D_r := \exp_{GL(n, \R)} \bigl( A \log(r) \bigr)$ for a given family of dilations, we assume that for a given gradation of $\Lie{g}$ the group $G$ is equipped with the canonical homogeneous dilations from Definition~\ref{CanDilGrGr}. This assumption may seem rather restrictive, but without any information about the eigenbasis of a given family of dilations it is not clear at all how the corresponding homogeneous balls look like, which are needed in order to apply ter Elst and Robinson's machinery.
	\end{ass}

With this at hand, the generalization of Proposition~\ref{SpecEstHAO} reads as follows.

	\begin{prop} \label{PropSpecEstSIZ}
Let $G$ be a graded group which satisfies $\mathrm{(SI)}$, $\mathrm{(Z)}$, and without loss of generality, the Assumptions~\ref{AssGrGr}. Let $\RF \in \UEA{\Lie{g}}$ be a positive Rockland form of homogeneous degree $\hdeg$ and let $\pi \in SI/Z(G)$. Denote by $\fd$ the formal dimension of $\pi$ and by $\hdim_{\Lie{z}}$ the homogeneous dimension of the center $\Liez{\Lie{g}}$. Then the number of eigenvalues of $d\pi(\RF)$, counted with multiplicities, which are less or equal $\l > 0$ is asymptotically given by
	\begin{align*}
		N(\l, \pi, \RF) \asymp \fd^{-1} \hspace{4pt} \l^{\frac{\hdim - \hdim_{\Lie{z}}}{\hdeg}}.
	\end{align*}
	\end{prop}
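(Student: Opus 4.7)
The plan is to follow the template of the proofs of Propositions~\ref{SpecEstHHO} and \ref{SpecEstHAO}, with Theorem~\ref{ThmtERo} as the engine. Three ingredients are needed: (i) the explicit form of the orbital measure $\om_\pi$ on the flat orbit $\Orbit_\pi$; (ii) a convenient quasi-norm on $\Lie{g}^*$ adapted to the canonical dilations; (iii) the bookkeeping of non-central weighted dimensions.

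First, by $\mathrm{(SI)}$, $\mathrm{(Z)}$, and Moore-Wolf, the orbit $\Orbit_\pi$ is an affine subspace of $\Lie{g}^*$ of codimension $1$; after translating along $\Rspan{X_{2n}^*, \ldots, X_1^*}$ we may take $\Orbit_\pi = c_\pi Z^* + \Rspan{X_{2n}^*, \ldots, X_1^*}$ for some $c_\pi \in \R \setminus \{ 0 \}$ uniquely determined by $\pi$. The direct integral decomposition $\Leb_{\Lie{g}^*} = \int^\oplus_{\SIZ(G)} \om_\pi \, d\Pla(\pi)$ from \cite[Thm.~4.3.10]{CoGr}, together with the identification of the Plancherel density on the generic part of $\widehat{G}$ (which has full measure) with the formal dimension, yields
\[
	\om_\pi = \fd^{-1} \hspace{1pt} \Leb_{\Orbit_\pi},
\]
where $\Leb_{\Orbit_\pi}$ is the Lebesgue measure on $\Orbit_\pi$ in the dual coordinates $(l_{x_{2n}}, \ldots, l_{x_1})$. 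This is the direct analogue of the formulae used in the proofs of Propositions~\ref{SpecEstHHO} and \ref{SpecEstHAO}.

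Next, equip $\Lie{g}^*$ with the homogeneous quasi-norm $\qn{\Lie{g}^*}{\, . \,}^\infty$ from Corollary~\ref{CQNGrGr}, relative to the strong Malcev basis $\{ Z, X_{2n}, \ldots, X_1 \}$ passing through the gradation. Since $\Lie{g}_N = \Liez{\Lie{g}}$ is spanned by $Z$ with weight $N$, the restriction of the quasi-norm to $\Orbit_\pi$ equals $\max\bigl\{ \Abs{c_\pi}^{1/N}, \Abs{l_{x_{2n}}}^{1/k(2n)}, \ldots, \Abs{l_{x_1}}^{1/k(1)} \bigr\}$, where $k(j)$ denotes the weight of $X_j$. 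For $\l \geq \Abs{c_\pi}^{1/N}$ the quasi-ball of radius $\l$ intersects $\Orbit_\pi$ in a box, and setting $\dimG_k := \dim(\Lie{g}_k)$ one computes
\[
	N_0(\l, \pi) = \fd^{-1} \prod_{k=1}^{N-1} (2 \l^k)^{\dimG_k} \asymp \fd^{-1} \hspace{1pt} \l^{\sum_{k=1}^{N-1} k \hspace{1pt} \dimG_k}.
\]
Since $\dimG_N = \dim \Liez{\Lie{g}} = 1$ forces $\hdim_{\Lie{z}} = N$ and $\hdim = \sum_{k=1}^N k \hspace{1pt} \dimG_k$, the exponent equals $\hdim - \hdim_{\Lie{z}}$. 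Applying Theorem~\ref{ThmtERo} in the form $N(\l, \pi, \RF) \asymp N_0(\l^{1/\hdeg}, \pi)$ delivers the claim.

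The main obstacle I anticipate is pinning down the orbital measure to the precise form $\fd^{-1} \Leb_{\Orbit_\pi}$ with a normalization compatible with the quasi-norm computation: this requires threading the Pfaffian-based Plancherel identity for $\SIZ$-groups through the $1$-dimensional center hypothesis to extract the constant $\fd^{-1}$ uniformly. The rest of the argument reduces to linear algebra and the direct application of ter Elst and Robinson's theorem, exactly as in the Heisenberg and Dynin-Folland cases already treated.
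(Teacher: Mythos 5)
Your proposal is correct and follows essentially the same route as the paper's own proof: parameterize the flat codimension-one orbit by $l'_\pi \in \R Z^* \setminus \{0\}$, extract $\om_\pi = |\Pf(l'_\pi)|^{-1}\Leb_{\Orbit_\pi} = \fd^{-1}\Leb_{\Orbit_\pi}$ from the Plancherel decomposition, compute $N_0(\l,\pi)$ as a box volume using the canonical quasi-norm $\qn{\Lie{g}^*}{\,.\,}^\infty$, and feed the result into Theorem~\ref{ThmtERo}. The only cosmetic difference is your invocation of Corollary~\ref{CQNGrGr} (the paper cites Proposition~\ref{QNGrGr}, of which the corollary is the canonical-dilations specialization being used here), and your explicit box-product form of $N_0(\l,\pi)$, which is the same computation written out.
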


The proof is very similar to the proof of Proposition~\ref{SpecEstHHO}. We present it nevertheless.

	\begin{proof}
By the Kirillov theory, the generic unitary irreducible representations all have flat coadjoint orbits of co-dimension $1$, and the representations as well as the coadjoint orbits can be parameterized by representatives $l' \in \Liez{\Lie{g}}^* \setminus \{ 0 \} = \R Z^* \setminus \{ 0 \}$. Thus, given $\pi$, there exists an $l'_{\pi} \in \Liez{\Lie{g}}^* \setminus \{ 0 \}$ such that the uniquely determined coadjoint orbit corresponding to $\pi$ equals $\Orbit_{l'_{\pi}} = l'_{\pi} + \Rspan{X^*_{2n}, \ldots, X^*_1}$. 
The corresponding orbital measure is given by $$\om_{\pi} = \Abs{\Pf(l'_{\pi})}^{-1} \hspace{1pt} \Leb_{\R^{2n+1}} = \fd^{-1} \hspace{1pt} \Leb_{\R^{2n+1}},$$ where $\Leb_{\R^{2n+1}}$ denotes the $(2n+1)$-dimensional Lebesgue measure on the affine subspace $\Orbit_{l'_{\pi}} \subseteq \Lie{g}^*$. This follows directly from the direct integral representation $\Leb_{\Lie{g}^*} = \int^\oplus_{\Liez{\Lie{g}}^*} \om_{\pi_{l'}} d\Pla(\pi_{l'})$ of the Lebesgue measure on $\Lie{g}^*$ by the Plancherel theorem (cf.~\cite[Thm.~4.3.10]{CoGr}) and the fact that the Plancherel measure on $\Liez{\Lie{g}}^* \cong \Lie{g}^*/\coAd(G) \cong \widehat{G}$ is given by $d\Pla(\pi_{l'}) = \Abs{\Pf(l')} \hspace{1pt} dl'$.

Since $G$ is equipped with the canonical homogeneous dilations, we can use the quasi-norm $\qn{\Lie{g}^*}{\, . \,}^\infty$ defined by Proposition~\ref{QNGrGr} to compute
	\begin{align*}
		N_0(\l, \pi) &= \om_{\pi} \bigl( \bigl \{ l \in \Orbit_{l'_{\pi}} : \qn{\Lie{g}^*}{l}^\infty \leq \l \bigr \} \bigr) \\
		 &= \fd^{-1} \hspace{1pt} \Leb_{\R^{2n+1}} \bigl( \bigl \{ l \in \Orbit_{l'_{\pi}} : \bigl | l_{j_{\dimG_j}} \bigr |, \ldots, \bigl | l_{j_1} \bigr | \leq \l^j : j = 1, \ldots, N \bigr \} \bigr ) \\
		 &= \fd^{-1} \hspace{1pt} \hspace{1pt} \l^{\hdim - \hdim_{\Lie{z}}}.
	\end{align*}
Hence, by \cite[Thm.~4.1]{tERo} we have $N_0(\l^{1/\hdeg}, \pi) \asymp N(\l, \pi, \RF) \asymp \fd^{-1} \hspace{4pt} \l^{\frac{\hdim - \hdim_{\Lie{z}}}{\hdeg}}$.
	\end{proof}

	\begin{obs}
By \cite{GrRo18}, Propositions~4.17 and 4.18, $\Lie{g}$ possesses an ideal $\pid $ which is polarizing for all $l' \in \Liez{g}$. As shown in the proofs, there is no loss in generality, assuming that $\pid = \Rspan{Z, X_{2n}, \ldots, X_{n+1}}$. As a consequence, each $\pi_{l'} \in SI/Z(G)$ can be realized as acting on $\L{2}{\PID \rquo G}$, and one may assume that $\PID \rquo G$ carries the canonical quotient group structure and homogeneous structure. Hence, for each $\pi_{l'} \in SI/Z(G)$ the operator $d\pi_{l'}(\RF)$ from Proposition~\ref{PropSpecEstSIZ} acts as a differential operator of homogeneous degree $\hdeg$ on the homogeneous space $\PID \rquo G$.

For a concrete realization one can choose quite convenient coordinates, e.g., strong Malcev coordinates and exponential coordinates with respect to the basis $\{ Z, X_{2n}, \ldots, X_{n+1} \}$; the exponential coordinates work out in this case because $\PID$ is an ideal and the exponential coordinates quotient naturally under the canonical quotient map $\Lie{g} \cong \Rspan{Z, X_{2n}, \ldots, X_1} \to \pid \rquo \Lie{g} \cong \Rspan{X_n, \ldots, X_1}$.
	\end{obs}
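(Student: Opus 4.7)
The observation is a three-part claim: the existence of a graded polarizing ideal $\pid$ with the stated basis form, the realization of $\pi_{l'}$ on $\L{2}{\PID \rquo G}$ carrying the canonical quotient homogeneous structure, and the assertion that $d\pi_{l'}(\RF)$ is a differential operator of homogeneous degree $\hdeg$ on $\PID \rquo G$. My plan is to address each piece in turn.

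For the existence and basis form of $\pid$, the plan is to invoke Propositions~4.17 and 4.18 of \cite{GrRo18} directly. These yield a graded polarizing subalgebra of dimension $n+1$ that contains $\Liez{g} = \R Z$ and is compatible with the gradation $\Lie{g} = \oplus_k \Lie{g}_k$. Since $\pid$ is graded and contains $Z$, I can refine the strong Malcev basis of Assumption~\ref{AssGrGr} so that it passes simultaneously through $\pid$ and through the gradation, and after relabeling of indices this yields $\pid = \Rspan{Z, X_{2n}, \ldots, X_{n+1}}$ with graded complement $\qa := \Rspan{X_n, \ldots, X_1}$.

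For the realization of $\pi_{l'}$, I would use the standard Kirillov-Mackey induction, setting $\pi_{l'} := \indR{\chi_{l'}}{\PID}{G}$ with $\chi_{l'}(\exp_G(Y)) := e^{2 \pi i \bracket{l'}{Y}}$ for $Y \in \pid$. Because $\pid$ is an ideal, $\PID$ is closed and normal in $G$, so the quotient $\PID \rquo G$ carries an induced Lie group structure; since $\pid$ is additionally a graded ideal, the gradation descends to $(\pid + \Lie{g}_k)/\pid$ and the dilations $D_r$ on $\Lie{g}$ descend to dilations $\tilde D_r$ on $\pid \rquo \Lie{g}$, which is precisely the canonical quotient homogeneous structure. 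The standard unitary model for the induced representation then realizes $\pi_{l'}$ on $\L{2}{\PID \rquo G}$. For the coordinate statement, the split exponential map $(Y, X) \mapsto \exp_G(Y) \exp_G(X)$ on $\pid \times \qa$ descends, because $\pid$ is an ideal, to exponential coordinates on $\PID \rquo G \cong \exp_G(\qa)$ compatible with the basis $\{X_n, \ldots, X_1\}$ of $\qa$.

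Finally, for the homogeneity of $d\pi_{l'}(\RF)$, I would compute the infinitesimal action on the strong Malcev basis via the Baker-Campbell-Hausdorff formula, in direct analogy with the Dynin-Folland computation of Section~\ref{SectionHHO}. For $X_j \in \qa$, $d\pi_{l'}(X_j)$ is realized as the left-invariant vector field on $\PID \rquo G$ corresponding to $X_j + \pid$ and is therefore homogeneous of intrinsic degree $\vt_j$; for $Y_k \in \pid$, the character $\chi_{l'}$ combined with BCH yields a polynomial differential operator whose weighted contribution to the graded calculus on $\PID \rquo G$ matches the weight $\vt_k$. Since $\RF$ is homogeneous of degree $\hdeg$ in $\UEA{\Lie{g}}$, extending multiplicatively through the algebra structure exhibits $d\pi_{l'}(\RF)$ as a polynomial differential operator of total weighted degree $\hdeg$ on $\PID \rquo G$. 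The main obstacle is precisely this last accounting step: verifying that the multiplication operators arising from $\chi_{l'}$ and the adjoint action carry the weights dictated by $\tilde D_r$. This closes because $\pid$ polarizes every generic $l'$ and because the gradation is respected by $\pid$ and by the Lie bracket, so the coordinate factors scale exactly as predicted.
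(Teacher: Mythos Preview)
The paper records this as an observation rather than a proposition with a separate proof; the justification is woven into the statement itself via the citation to \cite{GrRo18} and the accompanying remarks. Your expansion of the first two parts---the existence and basis form of the graded polarizing ideal $\pid$, and the realization of $\pi_{l'}$ on $\L{2}{\PID \rquo G}$ together with the induced quotient homogeneous structure---is correct and fleshes out precisely what the paper sketches.

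Your final paragraph, however, over-reads the phrase ``differential operator of homogeneous degree $\hdeg$.'' You assert that for $Y_k \in \pid$ the operator $d\pi_{l'}(Y_k)$ carries weight $\vt_k$ in the graded calculus on $\PID \rquo G$, but this fails already in the Dynin--Folland model: $Y_{2n+1}$ has weight $1$ in $\HA{n}{2}$, whereas $d\pi_{\rp}(Y_{2n+1}) = 2\pi i \rp \, t_{2n+1}$ is multiplication by the coordinate $t_{2n+1}$, which scales with weight $2$ under the quotient dilations on $\H$ and hence has homogeneous degree $-2$ as an operator. Consequently $\HHO = -\SL_{\H} + 4\pi^2 \rp^2 \, t_{2n+1}^2$ is \emph{not} homogeneous under the canonical dilations of $\H$: the sub-Laplacian term has degree $2$ and the potential term has degree $-4$. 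The paper's wording should therefore be read informally---$d\pi_{l'}(\RF)$ is a polynomial-coefficient differential operator on $\PID \rquo G$ that inherits the label ``degree $\hdeg$'' from $\RF \in \UEA{\Lie{g}}$---not as a claim of genuine dilation-homogeneity on the quotient. The weight-matching accounting you propose cannot close as stated.
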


\section{$L^\pt$-$L^\qt$-Multipliers and Sobolev Embeddings on Graded Groups} \label{LpLq}

In this section we present another instance of the usefulness of ter Elst and Robinson's techniques. 
We combine \cite[Thm.~4.1]{tERo} with the theory of spectral multipliers on locally compact groups developed in Akylzhanov and Ruzhansky~\cite{AkRu18} in order to prove the $L^\pt$-$L^\qt$-boundedness for spectral multipliers of positive Rockland operators on graded groups equipped with arbitrary homogeneous structures.
Specifically, we treat the heat semi-group $\bigl\{ e^{t \RO} \bigr \}_{t > 0}$ of a given Rockland operator $\RO$ and also obtain an alternative proof for the Sobolev embedding theorems first proved in Folland~\cite{Fo75} for stratified groups and, more generally, for graded groups in Fischer and Ruzhansky~\cite{FiRu17}.
Rockland operators which are homogeneous with respect to the homogeneous structures on the Dynin-Folland group characterized by Proposition~\ref{AnhDilDF} form special cases of Theorem~\ref{LpLqGrGr}.

Note that the theory developed in \cite{AkRu18} allows a much wider class of spectral multipliers on graded groups to be studied with the same methods we will use, but this would require a lengthier analysis and exceed the scope of this article.

We briefly set the notation in accordance with \cite{AkRu18}. 
For a more general introduction to the matter, we refer to von Neumann~\cite{vN49} and Dixmier~\cite{Dix81}.
Thus, we denote by $\vN_R(G)$ the right group von Neumann algebra of a graded group $G$ and by $\tr$ its canonical trace. We denote the spectral projections of the self-adjoint extension on $\L{2}{G}$ of a given positive Rockland operator $\RO = dR(\RF)$ by $\{ E_{\l}(\RO) \}_{\l \in \Sp(\RO)}$, and by $E_{(0, s)}(\RO)$ the spectral projection corresponding to the interval $(0, s)$. We say that a closed linear operator is affiliated with $\vN_R(G)$ if it commutes with the commutant ${\vN_R(G)}^! = \vN_L(G)$. This includes the case of the Dynin-Folland sub-Laplacian $\RO = -\SL_{\HG{n}{2}}$, which gave rise to the harmonic oscillator on $\H$ in Section~\ref{SectionHHO}.


To start with, we recall the definition of (inhomogeneous) Sobolev spaces on graded groups (cf.~\cite[Def.~4.5]{FiRu17}).

	\begin{dfn} \label{DefSobSp}
Let $G$ be a graded group equipped with an arbitrary but fixed homogeneous structure. Let $P \in \UEA{\Lie{g}}$ be a positive Rockland form of homogeneous degree $\hdeg$, let $\pt \in [1, \infty)$, and let $\st \in \R$. Then the (inhomogeneous) Sobolev space $\LW{\pt}{G}{\st}$ is defined as the subspace of $\SDG{G}$ obtained by the completion of $\SFG{G}$ with respect to the Sobolev norm
	\begin{align*}
		\Norm{\LW{\pt}{G}{\st}}{f} := \Norm{\L{\pt}{G}}{\bigl( I + \RO \bigr)^{\frac{\st}{\hdeg}}}, \hspace{10pt} f \in \SFG{G}.
	\end{align*}
	\end{dfn}

Note that by \cite[Thm.~4.4.20]{FiRuMon}, the Sobolev space $\LW{\pt}{G}{\st}$ does not depend on the specific Rockland operator $\RO$.

The main result of this section is the following very general theorem, for which we can give a surprisingly short proof by combining the deep results in \cite{tERo} and \cite{AkRu18}.

	\begin{thm} \label{LpLqGrGr}
Let $G$ be a graded group of topological dimension $\dimG$ equipped with a homogeneous structure. Let $\vt_1, \ldots, \vt_\dimG$ denote the associated weights and let $\hdim = \vt_1 + \ldots + \vt_\dimG$ denote the homogeneous dimension of $G$. Let $P \in \UEA{\Lie{g}}$ be a positive Rockland form of homogeneous degree $\hdeg$
and let $\alpha := \frac{\hdim}{\hdeg}$. Then the Rockland operator $\RO := dR(\RF)$ satisfies
	\begin{align*}
		\tr \bigl( E_{(0, \l)}(\RO) \bigr) \asymp \l^\alpha,
	\end{align*}
and for all $1 < \pt \leq 2 \leq \qt < \infty$ there exists a constant $C = C(\alpha, \pt, \qt) > 0$ such that
	\begin{align}
		\Norm{\L{\pt}{G} \to \L{\qt}{G}}{e^{-t \RO}} \leq C \hspace{1pt} t^{-\alpha \bigl( \frac{1}{\pt} - \frac{1}{\qt} \bigr)}, \hspace{10pt} t \to \infty, \nn 
	\end{align}
and such that
	\begin{align*}
		\Norm{\L{\qt}{G}}{f} \leq C \hspace{1pt} \Norm{\L{\pt}{G}}{(1 + \RO)^\gamma f} \hspace{10pt} \mbox{ if } \hspace{10pt} \gamma \geq \alpha \Bigl( \frac{1}{\pt} - \frac{1}{\qt} \Bigr).
	\end{align*}
In particular, this yields the continuous embeddings
	\begin{align*}
		\LW{\pt}{G}{\st_1} \subseteq \LW{\qt}{G}{\st_2}.
	\end{align*}
for $\st_1, \st_2 \in \R$ with $\st_1 - \st_2 = \frac{\hdim}{\hdeg} \Bigl( \frac{1}{\pt} - \frac{1}{\qt} \Bigr)$.
	\end{thm}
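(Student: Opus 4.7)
The plan is to prove the three assertions in sequence: first the trace asymptotic for $E_{(0, \l)}(\RO)$, then the $L^\pt$-$L^\qt$-heat bound via the spectral multiplier framework of \cite{AkRu18}, and finally the Riesz-type estimate and Sobolev embeddings via a subordination formula.

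For the trace asymptotic, I would combine the Plancherel disintegration with Theorem~\ref{ThmtERo}. Plancherel yields
\begin{align*}
\tr \bigl( E_{(0, \l)}(\RO) \bigr) = \int_{\widehat{G}} N(\l, \pi, \RF) \, d\Pla(\pi),
\end{align*}
and Theorem~\ref{ThmtERo} gives $N(\l, \pi, \RF) \asymp N_0(\l^{1/\hdeg}, \pi)$ uniformly in $\pi$. Invoking the disintegration of Lebesgue measure on $\Lie{g}^*$ along coadjoint orbits (\cite[Thm.~4.3.10]{CoGr}), the right-hand side reduces to the Lebesgue measure of the homogeneous ball $\bigl\{ l \in \Lie{g}^* : \qn{\Lie{g}^*}{l} \leq \l^{1/\hdeg} \bigr\}$, which by homogeneity of the quasi-norm under $D^*_r$ scales as $\l^{\hdim/\hdeg}$. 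This proves $\tr(E_{(0, \l)}(\RO)) \asymp \l^\alpha$.

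Next, since $\RO$ is positive self-adjoint and affiliated with the semi-finite von Neumann algebra $\vN_R(G)$, and the previous step supplies the required growth of the trace of its spectral projections, I would invoke the abstract spectral multiplier results in \cite{AkRu18}, which convert such Nash-type trace asymptotics into $L^\pt$-$L^\qt$-boundedness of functions of $\RO$, yielding the claimed heat semigroup bound for $1 < \pt \leq 2 \leq \qt < \infty$. For the Riesz-type estimate and Sobolev embedding, I would use the subordination formula
\begin{align*}
(1 + \RO)^{-\gamma} = \frac{1}{\Gamma(\gamma)} \int_0^\infty t^{\gamma - 1} e^{-t} \, e^{-t \RO} \, dt
\end{align*}
and split at $t = 1$: the exponential factor controls the $t \to \infty$ tail via the heat bound just established, while the short-time sub-Gaussian heat kernel estimates (classical for positive Rockland operators) control $t \in (0, 1]$. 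Integrability of $t^{\gamma - 1 - \alpha(1/\pt - 1/\qt)}$ near $t = 0$ holds for $\gamma \geq \alpha(1/\pt - 1/\qt)$ up to an endpoint that can be recovered by a limiting or interpolation argument from \cite{AkRu18}. This gives $(1+\RO)^{-\gamma}: \L{\pt}{G} \to \L{\qt}{G}$ boundedly, which is the Riesz-type estimate; the Sobolev embedding follows immediately from Definition~\ref{DefSobSp} with $\gamma := (\st_1 - \st_2)/\hdeg = \alpha(1/\pt - 1/\qt)$.

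The principal obstacle is the careful verification of the hypotheses of the \cite{AkRu18} framework in the present graded-group setting---in particular, the affiliation of $\RO$ with $\vN_R(G)$ and the matching of the growth exponent---together with a clean treatment of the endpoint Sobolev case, where the Mellin integral is borderline divergent at $t = 0$.
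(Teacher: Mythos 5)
Your argument for the trace asymptotic and the heat semigroup bound is essentially the paper's own argument: you disintegrate $\tr(E_{(0,\l)}(\RO))$ over $\widehat{G}$ via Dixmier/Plancherel, apply Theorem~\ref{ThmtERo} uniformly in $\pi$ to replace the fiberwise eigenvalue count by the orbital measure of a homogeneous ball, and reassemble via the disintegration of Lebesgue measure on $\Lie{g}^*$ to obtain $\tr(E_{(0,\l)}(\RO)) \asymp \l^{\hdim/\hdeg}$; then you feed this growth bound together with the affiliation of $\RO$ to $\vN_R(G)$ into the spectral multiplier machinery of \cite{AkRu18} to get the $L^{\pt}$-$L^{\qt}$ heat estimate. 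This is precisely the paper's strategy, down to the citations.

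Where you diverge is the Riesz-type estimate and the Sobolev embedding. The paper obtains $\Norm{\L{\qt}{G}}{f} \leq C \Norm{\L{\pt}{G}}{(1+\RO)^\gamma f}$ by applying \cite[Cor.~8.2]{AkRu18} directly to the multiplier $\varphi_2(u) = (1+u)^{-\gamma}$, on exactly the same footing as the heat multiplier $\varphi_1(u) = e^{-tu}$; no subordination is needed, and the endpoint $\gamma = \alpha(1/\pt - 1/\qt)$ is covered automatically. You instead propose to subordinate $(1+\RO)^{-\gamma}$ to the heat semigroup and split the Mellin integral at $t=1$. That route is correct for $\gamma > \alpha(1/\pt - 1/\qt)$ (the $t\to 0$ piece converges once you have the short-time on-diagonal bound $\|e^{-t\RO}\|_{\L{\pt}{G}\to\L{\qt}{G}} \lesssim t^{-\alpha(1/\pt-1/\qt)}$, which does hold for positive Rockland operators by homogeneity of the heat kernel), but at the endpoint $\gamma = \alpha(1/\pt - 1/\qt)$ the integrand is $t^{-1}e^{-t}$ near $0$ and the integral diverges. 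Since the endpoint is exactly what is needed for the stated Sobolev embedding with $\st_1 - \st_2 = \frac{\hdim}{\hdeg}(\frac{1}{\pt}-\frac{1}{\qt})$, your appeal to ``a limiting or interpolation argument from \cite{AkRu18}'' is not a closed step --- in effect you fall back on the same corollary the paper uses. So the subordination detour does not actually save work here: it gives a self-contained proof only of the non-sharp Sobolev range, and for the sharp endpoint it is cleaner (and necessary) to invoke \cite[Cor.~8.2]{AkRu18} directly, as the paper does.

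One small technical point your sketch glosses over and the paper addresses explicitly: the Plancherel-normalized Lebesgue measure on $\Lie{g}^*$ is tied to a strong Malcev basis parameterizing the generic orbits, whereas the homogeneous quasi-norm $\qn{\Lie{g}^*,A}{\cdot}^\infty$ from Proposition~\ref{QNGrGr} is built from an eigenbasis of the dilation matrix $A$; these bases need not coincide for a general homogeneous structure. The paper inserts the Jacobian $\Abs{\det(\Phi)}^{-1}$ of the change of basis, which is a harmless constant but is needed to make the volume computation $\Leb_{\Lie{g}^*}(B_{\l^{1/\hdeg}}(0)) \asymp \l^{\hdim/\hdeg}$ legitimate for arbitrary (not just canonical) dilations. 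Your write-up should acknowledge this to keep the claim at full generality.
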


We prove the above estimates by making use of \cite{AkRu18}, Corollaries~8.1 and 8.2, respectively. Note that the statements are akin to \cite[Thm.~6.1]{AkRu18}, a generalized version of H\"{o}rmander's multiplier theorem for the setting of separable unimodular locally compact groups, which provides an asymptotic bound from above for the $\BL(\L{\pt}{G},\L{\qt}{G})$-operator norm of Fourier multipliers. 
The Sobolev embedding in the above estimates give a different justification for those established in \cite{FiRu17}, for the corresponding ranges of indices: but see \cite{FiRuMon,FiRu17} for a more comprehensive treatment of Sobolev spaces on graded groups and their embeddings.


	\begin{proof} [Proof of Theorem~\ref{LpLqGrGr}]
Let $\varphi_1, \varphi_2: \R^+ \to \R$ be defined by $\varphi_1(u) := e^{-tu}$ and $\varphi_2(u) := (1+u)^{- \gamma}$. Since ${\vN_R(G)}^! = \vN_L(G)$ and the left-invariant operator $\RO = dR(\RF)$ clearly commutes with the left regular representation, $\RO$ and its spectral multipliers $\varphi_1(\RO) = e^{-t \RO}$ and $\varphi_2(\RO) = (1 - \RO)^\gamma$ are affiliated with $\vN_R(G)$.
Now, in order to apply \cite[Cor.~8.1]{AkRu18} and \cite[Cor.~8.2]{AkRu18}, we need to provide an estimate of the form
	\begin{align*}
		\tr \bigl( E_{(0, \l)}(\RO) \bigr) \lesssim \l^\alpha, \hspace{10pt} \l \to \infty,
	\end{align*}
for $\alpha = \frac{\hdim}{\hdeg}$. The estimate is due to the following argument: On the one hand, by Dixmier~\cite[p.~225, Thm.~1]{Dix81}, we can decompose the spectral projection of $\RO$ as 
	\begin{align}
		\tr \bigl( E_{(0, \l)}(\RO) \bigr) &= \int^\oplus_{[\pi] \in \widehat{G}} \tr \Bigl( E_{(0, \l)} \bigl( \pi(\RO) \bigr) \Bigr) \, d\Pla(\pi). \label{SpecInt}
	\end{align}
On the other hand, we employ the decomposition
	\begin{align}
		\Leb_{\Lie{g}^*} = \int^\oplus_{[\pi] \in \widehat{G}} \hspace{4pt} \om_{\pi} \, d\Pla(\pi) \label{DecLeb}
	\end{align}
of the Lebesgue measure on $\Lie{g}^*$ into a direct integral of the orbital measures $\om_{\pi}$. The decomposition follows from the Plancherel theorem for nilpotent groups, and the Fubini-Tonelli-like formula \eqref{DecLeb} is a consequence of the actual proof of the Plancherel theorem (see, e.g., \cite[Thm.~4.3.10]{CoGr} and the subsequent formula and explanation on page~154). As the proof shows, the Lebesgue measure splits up as the direct sum
	\begin{align}
		\Leb_{\Lie{g}^*} = \int^\oplus_{[\pi] \in \widehat{G}_{gen}} \hspace{4pt} \om_{\pi} \, d\Pla(\pi) \label{DecLebGO}
	\end{align}
of the orbital measures corresponding to the generic representations. Since the subset of generic representations $\widehat{G}$ has full Plancherel measure in $\widehat{G}$ (cf.~\cite[Thm.~4.3.16]{CoGr}), \eqref{DecLebGO} already implies \eqref{DecLeb}. By the proof of the Plancherel theorem, the Lebesgue measure on $\Lie{g}^*$ in \eqref{DecLeb} is normalized with respect to a strong Malcev basis which allows the parameterization of the generic coadjoint orbits. Let us denote by $\Phi$ the linear change of variables which maps this basis onto the basis which diagonalizes the matrix $A$ of the dilations $D_r := \exp_{GL(n, \R)} \bigl( A \log(r) \bigr)$. 
Now, if $B_\l(0)$ denotes the ball of radius $\l$ around $0 \in \Lie{g}^*$ defined by the homogeneous quasi-norm $\qn{\Lie{g}^*, A}{\, . \,}^{\infty}$ from Proposition~\ref{QNGrGr}, then by the proposition and by \cite[Thm.~4.1]{tERo} we have
	\begin{align}
		\tr \Bigl ( E_{(0, \l)} \bigl( \pi(\RF) \bigr) \Bigr ) \asymp N(\l, \pi, \RF) \asymp \om_{\pi}\bigl( B_{\l^\frac{1}{\hdeg}}(0) \cap \Orbit_\pi \bigr), \label{AbstractSpecEst}
	\end{align}
uniformly for all $\l > 0$ and all representations $\pi \in \widehat{G}$. Combing \eqref{DecLeb} and \eqref{AbstractSpecEst}, we obtain
	\begin{align}
		\tr \bigl( E_{(0, \l)}(\RO) \bigr) &\asymp \int^\oplus_{[\pi] \in \widehat{G}} \om_{\pi} \bigl( B_{\l^\frac{1}{\hdeg}}(0) \cap \Orbit_\pi \bigr) \, d\Pla(\pi)
		= \Leb_{\Lie{g}^*} \bigl( B_{\l^\frac{1}{\hdeg}}(0) \bigr) \nn \\
		&= \Abs{\det(\Phi)}^{-1} \hspace{2pt} 2^\dimG \hspace{2pt} \l^{\vt_1/\hdeg} \cdots \l^{\vt_\dimG/\hdeg} \asymp \l^{\frac{\hdim}{\hdeg}}. \label{SpecBdRO}
	\end{align}
An application of \cite[Cor.~8.1]{AkRu18} and \cite[Cor.~8.2]{AkRu18}, respectively, finally yields the announced estimates.
	\end{proof}

	\begin{exs}
Theorem~\ref{LpLqGrGr} holds for the negative sub-Laplacian $\RO := -\SL_G$ of every stratified group $G$, e.g., the Heisenberg group $\H$ with $\hdim = 4n+2$, the Dynin-Folland group $\HG{n}{2}$ with $\hdim = 6n+6$, the Engel group (cf.~\cite[Ex.~1.3.10]{CoGr}) with $\hdim = 7$, etc.
	\end{exs}


As a corollary we get the following specific statement for the homogeneous structures on the Dynin-Folland group $\HG{n}{2}$ characterized by Proposition~\ref{AnhDilDF}.

	\begin{prop} \label{GenLpLqDF}
Let the Dynin-Folland group $\HG{n}{2}$ be equipped with the dilations~\eqref{GenDilDF} determined by the weights $\vt_1, \ldots, \vt_{2n+1} \in \N$, and denote by $\hdim$ the associated homogeneous dimension of $\HG{n}{2}$ equal to $(2n+2) \hspace{1pt} (\vt_1 + \vt_{n+1} + \vt_{2n+1})$. Let $P \in \UEA{\HA{n}{2}}$ be a positive Rockland form which is homogeneous of degree $\hdeg$ with respect to the homogeneous structure under consideration. Moreover, let
	\begin{align*}
		\alpha := \frac{\hdim}{\hdeg} = \frac{(2n+2)(\vt_1 + \vt_2 + \vt_3)}{\hdeg}.
	\end{align*}
Then the Rockland operator $\RO := dR(\RF)$ satisfies
	\begin{align*}
		\tr \bigl( E_{(0, \l)}(\RO) \bigr) \asymp \l^\alpha
	\end{align*}
and for all $1 < \pt \leq 2 \leq \qt < \infty$ there exists a constant $C = C(\alpha, \pt, \qt) > 0$ such that
	\begin{align*}
		\Norm{\L{\pt}{\HG{n}{2}} \to \L{\qt}{\HG{n}{2}}}{e^{-t \RO}} \leq C \hspace{1pt} t^{-\alpha \bigl( \frac{1}{\pt} - \frac{1}{\qt} \bigr)}, \hspace{10pt} t \to \infty,
	\end{align*}
and such that
	\begin{align*}
		\Norm{\L{\qt}{\HG{n}{2}}}{f} \leq C \hspace{1pt} \Norm{\L{\pt}{\HG{n}{2}}}{(1 + \RO)^\gamma f} \hspace{10pt} \mbox{ if } \hspace{10pt} \gamma \geq \alpha \Bigl( \frac{1}{\pt} - \frac{1}{\qt} \Bigr).
	\end{align*}
In particular, this yields the continuous embeddings
	\begin{align*}
		\LW{\pt}{\HG{n}{2}}{\st_1} \subseteq \LW{\qt}{\HG{n}{2}}{\st_2}.
	\end{align*}
for $\st_1, \st_2 \in \R$ with $\st_1 - \st_2 = \frac{\hdim}{\hdeg} \Bigl( \frac{1}{\pt} - \frac{1}{\qt} \Bigr)$. 
	\end{prop}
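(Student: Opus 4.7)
The plan is to obtain Proposition~\ref{GenLpLqDF} as a direct specialization of Theorem~\ref{LpLqGrGr}. Since the Dynin-Folland group $\HG{n}{2}$ is a graded group of topological dimension $4n+3$ and the family of dilations~\eqref{GenDilDF} with weights $\vt_1, \ldots, \vt_{2n+1} \in \N$ defines a genuine homogeneous structure by Proposition~\ref{AnhDilDF}, all the structural hypotheses of Theorem~\ref{LpLqGrGr} are already in place. In particular, a positive form $\RF \in \UEA{\HA{n}{2}}$ which is Rockland and homogeneous of degree $\hdeg$ with respect to this homogeneous structure satisfies precisely the requirements placed on $\RF$ in Theorem~\ref{LpLqGrGr}.

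The first step is therefore to read off the homogeneous dimension of $\HG{n}{2}$ from Proposition~\ref{AnhDilDF}, which asserts
\[
	\hdim = (2n+2)(\vt_1 + \vt_{n+1} + \vt_{2n+1}).
\]
Inserting this into the general expression $\alpha = \hdim/\hdeg$ from Theorem~\ref{LpLqGrGr} yields precisely the $\alpha$ stated in the proposition.

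The second step is to invoke Theorem~\ref{LpLqGrGr} directly with $G = \HG{n}{2}$, $\dimG = 4n+3$, and the weights and form $\RF$ specified in the hypothesis. This immediately produces the spectral trace estimate $\tr(E_{(0,\l)}(\RO)) \asymp \l^\alpha$, the heat semigroup bound
\[
	\Norm{\L{\pt}{\HG{n}{2}} \to \L{\qt}{\HG{n}{2}}}{e^{-t \RO}} \leq C \hspace{1pt} t^{-\alpha ( \frac{1}{\pt} - \frac{1}{\qt} )},
\]
the $L^\pt$-$L^\qt$-estimate for the resolvent-type multiplier $(1+\RO)^{-\gamma}$, and the continuous Sobolev embedding
\[
	\LW{\pt}{\HG{n}{2}}{\st_1} \subseteq \LW{\qt}{\HG{n}{2}}{\st_2}
\]
whenever $\st_1 - \st_2 = \frac{\hdim}{\hdeg}( \frac{1}{\pt} - \frac{1}{\qt} )$, which are precisely the assertions of the proposition.

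There is no genuine obstacle here: the entire content of Proposition~\ref{GenLpLqDF} is contained in Theorem~\ref{LpLqGrGr} once one observes that the Dynin-Folland group equipped with any family of dilations of the form~\eqref{GenDilDF} is a graded group to which the theorem applies and once one substitutes the explicit value of $\hdim$ provided by Proposition~\ref{AnhDilDF}. The only point worth emphasizing in the write-up is that the homogeneity of $\RF$ (of degree $\hdeg$) is taken with respect to the specific dilation structure fixed by the weights $\vt_1, \ldots, \vt_{2n+1}$, so that the general constant $\alpha$ from Theorem~\ref{LpLqGrGr} genuinely specializes to $(2n+2)(\vt_1+\vt_{n+1}+\vt_{2n+1})/\hdeg$ as claimed.
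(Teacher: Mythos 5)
Your proposal is correct and matches the paper's primary route: the paper introduces Proposition~\ref{GenLpLqDF} with the words ``As a corollary we get the following specific statement\dots,'' i.e., exactly as a specialization of Theorem~\ref{LpLqGrGr} with $G = \HG{n}{2}$, $\dimG = 4n+3$, and $\hdim = (2n+2)(\vt_1 + \vt_{n+1} + \vt_{2n+1})$ read off from Proposition~\ref{AnhDilDF}. The paper also supplies a second, ``alternative proof'' that re-derives the trace bound $\tr(E_{(0,\l)}(\RO)) \asymp \l^{\hdim/\hdeg}$ by integrating the fiberwise counting functions $N(\l,\pi_{\rp},\RF)$ from Proposition~\ref{SpecEstHAO} against the Plancherel measure $|\rp|^{2n+1}\,d\rp$, but that is offered as an illustrative alternative rather than the primary justification, so your write-up captures the intended argument.
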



We conclude this section with an alternative proof of Proposition~\ref{GenLpLqDF} for the case $G = \HG{n}{2}$, which is exemplary for the class of graded $SI/Z$-groups with $1$-dimensional center. The proof takes an intermediate step by estimating directly the integral in \eqref{SpecBdRO} for $G = \HG{n}{2}$ by using the estimates derived in Subection~\ref{Subsection_HAO}.

	\begin{proof} [Alternative Proof of Proposition~\ref{GenLpLqDF}]
We give an alternative proof only for the bound \eqref{SpecBdRO} since the rest follows by applying \cite[Cor.~8.1]{AkRu18} and \cite[Cor.~8.2]{AkRu18}, respectively. Our argument here goes as follows: Instead of using the shortcut \eqref{SpecBdRO}, we compute the integral \eqref{SpecInt} directly via the spectral estimates from Proposition~\ref{SpecEstHAO}. The unitary dual $\widehat{\mathbf{H}}_{n, 2}$ can be identified with $\Liez{\HA{n}{a}}^* = \R Z^*$; this is justified by the fact that $\widehat{\mathbf{H}}_{n, 2}$ equals $SI/Z(\HG{n}{2})$ up to a set of vanishing Plancherel measure.\footnote{If this argument were followed for non-$SI/Z$-groups $G$, then the spectral estimates for all $\pi \in \widehat{G}$ would have to be employed.}

Thus, given $\rp \in \R \setminus \{ 0 \}$, we have
	\begin{equation} \label{}
	\begin{array}{rclcl}
		N(\l, \pi_{\rp}, \RF) & \asymp & \om_{\pi_{\rp}} \bigl (B_{\l^{1/2}}(0) \bigr ) &=& \Abs{\Pf(\rp Z^*)} \hspace{3pt} \Leb_{\R^{4n+2}} (B_{\l^{1/2}}(0) \bigr ) \\
		&=& \Abs{\rp}^{-(2n+1)} \hspace{1pt} \l^{\frac{\hdim - \hdim_{\Lie{z}}}{\hdeg}} &=& \Abs{\rp}^{-(2n+1)} \hspace{1pt} \l^{\frac{(2n + 1) (\vt_1 + \vt_{n+1} + \vt_{2n+1})}{\hdeg}}.
	\end{array}
	\end{equation}
Now, for $l = l_z Z^* + l_{y_1} Y^*_1 + \ldots + l_{x_1} X^*_1 \in B_{\l^{1/\hdeg}}(0)$ we have
	\begin{align*}
		B_{\l^{1/\hdeg}}(0) \cap \Orbit_{\rp Z^*} = \emptyset &\Leftrightarrow \qn{\HA{n}{2}^*, A}{l_z Z^*}^{\infty} \leq \l^{\frac{1}{\hdeg}} < \qn{\HA{n}{2}^*, A}{\rp Z^*} \\
		&\Leftrightarrow \Abs{l_z}^{\frac{1}{\hdim_{\Lie{z}}}} \leq \l^{\frac{1}{\hdeg}} < \Abs{\rp}^{\frac{1}{\hdim_{\Lie{z}}}} \\
		&\Leftrightarrow \Abs{l_z} \leq \l^{\frac{\hdim_{\Lie{z}}}{\hdeg}} < \Abs{\rp},
	\end{align*}
where $\hdim_{\Lie{z}} = \vt_1 + \vt_{n+1} + \vt_{2n+1}$.
So,
	\begin{align*}
		\tr \bigl( E_{(0, \l)}(\RO) \bigr) &\asymp \int_{\R \setminus \{ 0 \}} N(\l, \pi_{\rp}, \RF) \Abs{\rp}^{-(2n+1)} \,d\rp \\
		&\asymp \int^{\l^{\frac{\hdim_{\Lie{z}}}{\hdeg}}}_{-\l^\frac{\hdim_{\Lie{z}}}{\hdeg}} \Abs{\rp}^{-(2n+1)} \hspace{1pt} \l^{\frac{\hdim - \hdim_{\Lie{z}}}{\hdeg}} \Abs{\rp}^{-(2n+1)} \,d\rp \\
		&= 2 \hspace{1pt} \l^{\frac{\hdim_{\Lie{z}}}{\hdeg}} \hspace{1pt} \l^{\frac{\hdim - \hdim_{\Lie{z}}}{\hdeg}} 
		\asymp \l^{\frac{\hdim}{\hdeg}}.
	\end{align*}
This completes our proof.
	\end{proof}

	\begin{rem} 
This alternative proof of Theorem~\ref{LpLqGrGr} for $G = \HG{n}{2}$ does in general not carry over easily to all graded groups, in particular not to non-$SI/Z$-groups. In the above proof we have made use of the fact that the homogeneous volume of $B_{\l^{1/\hdeg}}(0) \cap \Orbit_{\rp Z^*}$ could be computed easily. Our computation is facilitated by the fact that the central basis vector $Z$ is an eigenvector of the dilation matrix $A$.\footnote{In general, one has no information about how the elements in $Z(G)$ scale and different vectors in $Z(G)$ will in general scale differently.} This need not be the case, but at least the orbital measure is very simple for $SI/Z$-groups, namely a scaled version of the Lebesgue measure.

Let us highlight the additional difficulty of determining the orbital measures for a concrete example of a non-$SI/Z$-group, the $3$-step nilpotent Engel group. The Engel Lie algebra $\Lie{g}$ is defined as $\R^4$ equipped with the Lie bracket which for the standard basis $\{ X_1, \ldots, X_4 \}$ is defined by
	\begin{align*}
		[X_1, X_2] = X_3, \hspace{5pt} [X_1, X_3] = X_4.
	\end{align*}
The Engel Lie algebra is $3$-step nilpotent and possesses a natural stratification $\Lie{g} = \oplus_{k = 1}^3 \Lie{g}_k$ with
	\begin{align*}
		\Lie{g}_3 = \R X_4, \hspace{5pt} \Lie{g}_2 = \R X_3, \hspace{5pt} \Lie{g}_1 = \Rspan{X_1, X_2}.
	\end{align*}
The coadjoint orbits of the corresponding connected, simply connected nilpotent group $G$, the Engel group, are classified as follows, see~\cite[Ex.~2.2.7]{CoGr} or \cite[Ch.3~\S~3]{Ki04}:
	\begin{itemize}
		\item[(i)] $\Orbit_{\delta X^*_4 + \beta X^*_2} = \bigl \{ \delta X^*_4 + t X^*_3 + (\beta + \frac{t^2}{2 \delta}) X^*_2 + s X^*_1 : s,t \in \R \bigr \}$ if $\delta \neq 0$,
		\item[(ii)] $\Orbit_{\gamma X^*_3} = \gamma X^*_3 + \Rspan{X^*_1, X^*_2}$ if $\gamma \neq 0$,
		\item[(iii)] $\Orbit_{\alpha X^*_1 + \beta X^*_2} = \{ \alpha X^*_1 + \beta X^*_2 \}$ for $\alpha, \beta \in \R$.
	\end{itemize}

Thus, $G$ has no square-integrable representations modulo $Z(G) = \exp_G(\R X_4)$, only modulo the projective kernel $\exp_G(\R X_3 \oplus \R X_4)$ and the projective representations corresponding to the orbits $\Orbit_{\gamma X^*_3}$ are projective Schr\"{o}dinger representations $\rho_{\gamma}$. Nevertheless, the Plancherel measure $\mu$ is concentrated on $\{ \pi_{\delta, \beta} : \delta \neq 0, \beta \in \R \}$ and $\mu$ can be identified with $\Abs{\delta} d\delta \, d\beta$. The orbits $\Orbit_{\delta X^*_4 + \beta X^*_2}$ are $3$-dimensional parabolic cylinders which fill up the $3$-dimensional affine hyperplane $\delta X^*_4 + \Rspan{X^*_1, X^*_2, X^*_3}$ whenever $\delta \neq 0$.

Hence, for the negative sub-Laplacian $-\SL_G$ related to the canonical homogeneous structure from Definition~\ref{CanDilGrGr} and the canonical quasi-norm $\qn{\Lie{g}^*}{\, . \,}^\infty$ from Proposition~\ref{QNGrGr} the estimate
	\begin{align}
		\tr \bigl( E_{(0, \l)}(-\SL_G) \bigr) \asymp \int^\oplus_{[\pi] \in \widehat{G}} \om_{\pi} \bigl( B_{\l^\frac{1}{\hdeg}}(0) \cap \Orbit_\pi \bigr) \, d\Pla(\pi), \label{SpecBdEngel}
	\end{align}
which is essentially due to \cite[Thm.~4.1]{tERo}, implies that, regardless of the concrete orbital measure $\om_{\pi}$, a representation $\pi \in \widehat{G}$ does not contribute to \eqref{SpecBdEngel} unless $[\pi] = [\pi_{\alpha, \beta}]$ and $B_{\l^{1/2}}(0) \cap \Orbit_{\delta X^*_4 + \beta X^*_2} = \emptyset$;
as in the alternative proof of Theorem~\ref{LpLqGrGr} for $G = \HG{n}{2}$, one need actually not know the values of $\om_{\pi_{\delta, \beta}} \bigl( B_{\l^\frac{1}{\hdeg}}(0) \cap \Orbit_{\delta X^*_4 + \beta X^*_2} \bigr)$ for $\delta \in \R \setminus \{ 0 \}, \beta \in \R$, in order to know that \eqref{SpecBdEngel} sums up to $\Leb_{\Lie{g}^*} \bigl( B_{\l^\frac{1}{\hdeg}}(0) \bigr) = \l^{\frac{\hdim}{\hdeg}}$. For general $G$ and $\pi \in \widehat{G}$ it may already be very difficult to determine the arclength $\om_{\pi} \bigl( B_{\l^\frac{1}{\hdeg}}(0) \cap \Orbit_{\pi} \bigr)$, but even in our case determining the limits of the integral is not trivial and requires case distinctions depending on $\beta$ and $\lambda$; integrating the arclengths as functions of $\beta$ in order to determine the numerical value of \eqref{SpecBdEngel} proves to be another technical challenge, very much in contrast to the simple case of flat orbits above.
	\end{rem}

\section*{Acknowledgments}

Michael Ruzhansky was supported in parts by the FWO Odysseus Project, by the EPSRC grant EP/R003025/1 and by the Leverhulme
Grant RPG-2017-151.

David Rottensteiner was supported by the Austrian Science Fund (FWF) project [I~3403], awarded to Stephan Dahlke, Hans Feichtinger and Philipp Grohs.

The authors would like to thank V\'eronique Fischer and Julio Delgado for discussions, which have led also to the recent papers \cite{FiRoRu} and \cite{ChDeRu18}, dealing also with the Dynin-Folland group and the Heisenberg-modulation spaces, and with the Euclidean anharmonic oscillators via the Weyl-H\"ormander theory, respectively.

\bibliographystyle{alphaabbr}
\bibliography{Bib_HAO}

\end{document}